\newcommand{\intR}{\int_{\mathbb R}}
\DeclareMathOperator{\supp}{supp}
\allowdisplaybreaks \numberwithin{equation}{section}
\theoremstyle{plain}
\newtheorem{thm}{Theorem}[section]
\newtheorem{cor}[thm]{Corollary}
\newtheorem{lem}[thm]{Lemma}
\newtheorem{prop}[thm]{Proposition}
\newtheorem{defn}[thm]{Definition}
\title{\bf A variational characterization of 2-soliton profiles for the KdV equation}
\author{John P. Albert}
\affil{Department of Mathematics, University of Oklahoma\\ Norman, OK 73019\\ \texttt{jalbert@ou.edu}}
\author{Nghiem V. Nguyen}
\affil{Department of Mathematics and Statistics, Utah State University\\   Logan, UT 84322   \\ \texttt{nghiem.nguyen@usu.edu}}
\begin{document}
\maketitle

\newcommand\blfootnote[1]{%
  \begingroup
  \renewcommand\thefootnote{}\footnote{#1}%
  \addtocounter{footnote}{-1}%
  \endgroup
}

\begin{abstract} It is well known that 2-soliton profiles for the
KdV equation are local minimizers of a constrained variational problem involving 
three polynomial conservation laws.  Here we show that 2-soliton profiles
are the global minimizers for this variational problem.  The proof, which proceeds via a profile decomposition,
also shows that every minimizing sequence converges strongly to the set of minimizing profiles.
\end{abstract}

\section{Introduction}
\label{sec:intro}
\renewcommand{\theequation}{\arabic{section}.\arabic{equation}}
\setcounter{section}{1} \setcounter{equation}{0}

The variational properties of multi-soliton solutions of the Kortweg-de Vries (KdV) equation have played a central role in the study of this equation since shortly after the discovery of its remarkable properties in the 1960s.  An early milestone was the paper \cite{L} of Lax, in which it is pointed out that multi-soliton profiles are critical points of constrained variational problems in which the constraint functionals and the objective functional are conserved under the flow defined by the KdV equation.  This suggested that it might be possible  to establish stability properties of multi-soliton solutions by using the conserved quantities as Lyapunov-type functionals.

\blfootnote{MSC Codes: 35Q53 35C08 35A15 \qquad Keywords:  multi-soliton, KdV, global minimizers}

 Benjamin \cite{Be} (see also Bona \cite{Bo}) took a first step in this direction by showing that solitary-wave profiles  are local minimizers in $H^1(\mathbb R)$ for the conserved functional $E_3$, subject to the constraint that $E_2$ be held constant (see Section \ref{sec:main} for the definition of $E_j$), and deducing the orbital stability of solitary-wave solutions as a consequence.   Later, the work of Cazenave and Lions (see for example, \cite{CL}, and the expository treatment in \cite{Al2}) established that solitary-wave profiles are actually global minimizers of this variational problem in a strong sense:  every minimizing sequence for the variational problem has a subsequence which, after appropriate translations, converges in $H^1(\mathbb R)$ to a solitary-wave profile. Orbital stability of solitary waves is an immediate consequence.

Maddocks and Sachs \cite{MS} generalized the theory of Benjamin and Bona to obtain a stability result for multi-soliton solutions of KdV.  A key step in their proof was to show that the profiles of $N$-soliton solutions are local minimizers in $H^N(\mathbb R)$ of the conserved functional $E_{N+2}$, when the functionals $E_2$, $E_3$, \dots, $E_{N+1}$ are held constant.  Their proof, like that of Benjamin and Bona for single solitons, did not yield information about global minimizers of the variational problem.

In this paper, we consider the special case $N=2$ of the variational problem considered in \cite{MS}:  that is, the problem of minimizing $E_4$ when $E_2$ and $E_3$ are held constant.   In our main result, Theorem \ref{mainthm} below, we show that indeed 2-soliton solutions represent the global minimizers for this variational problem.   

 An easy consequence of Theorem \ref{mainthm} is a stability result for
$2$-soliton solutions in $H^2(\mathbb R)$, stated below as Corollary \ref{stability}.   Of course, this is only a special case of the stability result of \cite{MS}, which was asserted for $N$-solitons for general $N$, not just for $N=2$.  Moreover, in recent years a number of papers have appeared on the topic of stability of multi-solitons which have improved on the result of \cite{MS}.  In particular, Killip and Visan \cite{KV} have proved a stability result for $N$-soliton solutions of KdV which is in some sense optimal:  it asserts stability in $H^{-1}$, or more generally in any space $H^s$ with $s \ge -1$.   Instead of the variational characterization of multi-solitons used here or in \cite{MS}, they use a different variational characterization, which is motivated by the inverse scattering theory for KdV, yet which is well-adapted to potentials in low-regularity Sobolev spaces where the classical inverse scattering theory does not apply.  We also note the recent work of Le Coz and Wang \cite{LCW}, who by building on and elucidating the work of \cite{MS} obtain a stability result for $N$-soliton solutions of the modified KdV equation.  Their methods should be transferrable to other integrable equations as well; and in particular it would be worth using them to revisit the stability theory for KdV multisolitons. 

 Apart from its consequences for stability theory, Theorem \ref{mainthm} is of interest in that it settles, at least in the case $N=2$, the question of whether multisolitons are actually global minimizers of a natural variational problem for KdV, expressed in terms of polynomial conservation laws which can be viewed as action variables in a formulation of KdV as an infinite-dimensional Hamiltonian system.  It can thus be viewed as a step towards obtaining an analogue for KdV on the real line of the elegant theory produced for the periodic KdV equation by Lax \cite{L3} and Novikov \cite{N}.  The proof has the advantage of simplicity:  it shows that the result is a straightforward consequence of the profile decomposition, a general phenomenon unconnected with the KdV equation or its integrable structure, once one shows that $2$-soliton profiles are minimizers for the constrained variational problem when consideration is restricted to the set of multi-soliton profiles.  In other words, the fact that $2$-soliton profiles are global minimizers in $H^2$ can be shown to follow from the profile decomposition, together with the fact that they are minimizers within the set of all multi-soliton profiles. 

An important caveat, however, is that the argument can only proceed because of the uniqueness result for $2$-solitons stated as Theorem \ref{unique} below; and such uniqueness results can be very difficult to prove in other settings.  In fact, one of the main reasons we have restricted ourselves to $2$-solitons in the present paper is that an analogue of Theorem \ref{unique} is not yet available for general $N$-solitons (see \cite{Al} for a discussion of what remains to be shown).   

The plan of the remainder of the paper is as follows.  In Section \ref{sec:main} we review some basic properties
 of $N$-soliton solutions and polynomial conservation laws for the KdV equation,  state our main result 
Theorem \ref{mainthm}, and sketch its proof.  In Section \ref{sec:reviewKdV}, we review
 the profile decomposition, following \cite{M}.   In Section \ref{sec:finitemin}, we analyze a finite-dimensional 
minimization problem which arises from restricting the admissible functions in \eqref{deflambda} and
 \eqref{defJ} to $N$-soliton profiles.  Section \ref{sec:proofmain} contains the proof of our main result, Theorem \ref{mainthm},
and concludes with  a proof of Corollary \ref{stability}.

{\it Note:} After this paper was written, we learned of a recent preprint of T. Laurens \cite{Lau}, which includes a complete solution of characterizing $N$-solitons, for general $N$, as global minimizers for the variational problems considered by Maddocks and Sachs.  In particular, our Theorem \ref{mainthm} is a special case of theorems proved in \cite{Lau}.  Furthermore, Laurens obtains general results on the behavior of minimizing sequences,  which not only recover Maddocks and Sachs' stability result for multisolitons,  but also partly corroborate and partly refute a conjecture we make below at the end of Section 2.  A key feature of the analysis of \cite{Lau} is that, rather than relying as we do on a uniqueness theorem for solutions of the Euler-Lagrange equation, Laurens proves uniqueness of global minimizers of the variational problem directly, using direct and inverse scattering theory (more specifically, using the Zakharov-Fadeev formulas for the conserved functionals $E_j$). 

\bigskip

\noindent {\it Notation.}

If $E$ is a measurable subset of $\mathbb R$ and $1 \le p < \infty$, we define $L^p(E)$ to be the space of Lebesgue measurable real-valued functions $u$ on $E$ such that 
$\|u\|_{L^p(E)}=\left(\int_E |u|^p\ dx\right)^{1/p}$ is finite.   In the case when $E = \mathbb R$, we sometimes denote $L^p(\mathbb R)$ by simply $L^p$, and denote the norm of $u$ in $L^2(\mathbb R)$ by $\|u\|_{L^2}$.  
  
When $E$ is an open set in $\mathbb R$, for $k \in \mathbb N$, we define the $L^2$-based Sobolev space $H^k=H^k(E)$ to be the space of all measurable functions on $E$ whose distributional derivatives up to order $k$ are in $L^2(E)$, with norm given by 
$$
\|u\|_{H^k(E)}=\left(\sum_{i=0}^k \int_E \left(\frac{d^iu}{dx^i}\right)^2\ dx\right)^{1/2} .
$$ 
Note that $H^0(E)=L^2(E)$.  In the case when $E = \mathbb R$, we sometimes denote $H^k(\mathbb R)$ by simply $H^k$, and denote the norm of $u$ in $H^k(\mathbb R)$ by $\|u\|_{H^k}$.

For $x \in \mathbb R$ and $r>0$ we denote by $B(x,r)$ the open ball in $\mathbb R$ centered at $x$ with radius $r$, or in other words the interval $(x-r,x+r)$.  Also, for any subset $E$ of $\mathbb R$, we denote by $\chi_E$ the characteristic function of $E$, so that $\chi_E(x)=1$ for $x \in E$ and $\chi_E(x)=0$ for $x \notin E$.

\section{Statement of main result} \label{sec:main}

\bigskip
 
We begin by reviewing the definition and some basic properties of $N$-soliton solutions of the Korteweg-de Vries (KdV) equation, and the associated sequence of polynomial conservation laws.   For more details and further references, the reader is referred to, for example, \cite{Di} and \cite{Di2}; the early papers \cite{L, L3, L2} of Lax are also very readable.

Suppose $N \in \mathbb N$, $0 < C_1 < \dots < C_N$ and $(\gamma_1,\dots, \gamma_N) \in \mathbb R^N$.
An {\it $N$-soliton profile function} is a function of the form
\begin{equation*}
\psi_{C_1,\dots,C_N;\gamma_1,\dots,\gamma_N}(x)=3(\Delta'/\Delta)'
\end{equation*}
where $\Delta$ is defined as the $N \times N$ determinant of Wronskian form,
\begin{equation*}
\Delta=\Delta(y_1,\dots,y_N) =\left|
\begin{matrix}
y_1 & \dots &y_N  \\
y_1' & \dots &y_N' \\
\dots & \dots & \dots \\
y_1^{(N-1)} & \dots & y_N^{(N-1)}  \\
\end{matrix}
\right|,
\end{equation*}
with
 \begin{equation*}
y_j(x)=e^{\sqrt{C_j}(x-\gamma_j)}+(-1)^{j-1}e^{-\sqrt{C_j}(x-\gamma_j)},  \quad j=1,\dots,N.
\end{equation*}
 
From $N$-soliton profile functions, we can construct {\it $N$-soliton solutions} of the KdV equation,
\begin{equation}
u_t +uu_x+ u_{xxx}=0,
\label{KdV}
\end{equation}
simply by defining
\begin{equation*}
u(x,t)=\psi_{C_1, \dots, C_N; \gamma_1(t), \dots,  \gamma_N(t)}(x)
\end{equation*}
where for $j=1,\dots, N$,
\begin{equation*}
\gamma_j(t)=a_j + C_j t,
\end{equation*}
and $(a_1, \dots, a_N) \in \mathbb R^N$ is arbitrary.

In particular, a single-soliton profile is obtained by taking $N=1$, in which case we have, for $C>0$ and $\gamma \in \mathbb R$,
\begin{equation*}
\psi_{C,\gamma}(x)= 3(y_1'/y_1)'
\end{equation*}
where
$y_1= e^{\sqrt C (x-\gamma)} + e^{-\sqrt C(x-\gamma)}$.
In other words,
\begin{equation*}
\psi_{C,\gamma}(x)=\frac{3C}{ \cosh^2(\sqrt{C}(x-\gamma))}.
\end{equation*}
Then a single-soliton solution of the KdV equation is obtained by taking
$$
u(x,t)=\psi_{C,\gamma(t)}(x)
$$
where $\gamma(t)=a+Ct$ and $a \in \mathbb R$ is arbitrary.  

If the constants $\gamma_1, \dots, \gamma_N$ are widely separated, then the profile function $\psi_{C_1,\dots,C_N;\gamma_1,\dots,\gamma_N}$
closely resembles a sum of single-soliton profiles $\sum_{i=1}^N \psi_{C_i,\gamma_i}$.   A particular instance of this well-known fact that we will use below is the following:

 \begin{lem}
 Suppose $0 < C_1 < C_2$; $\gamma_1, \gamma_2 \in \mathbb R$; and $\{x_n^1\}$ and $\{x_n^1\} $ are sequences such that 
 $$
 \lim_{n \to \infty}|x_n^1 - x_n^2| = \infty.
 $$
 Then
 $$
 \lim_{n \to \infty} \left\|  \psi_{C_1,\gamma_1 + x_n^1} + \psi_{C_2,\gamma_2 + x_n^2} - \psi_{C_1,C_2,\gamma_1+x_n^1,\gamma_2 + x_n^2} \right\|_{H^2(\mathbb R)} = 0.
 $$
 \label{sumoftwosols}
  \end{lem}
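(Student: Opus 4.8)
The plan is to reduce the statement to an explicit one-dimensional asymptotic computation and then to control the resulting difference in $H^2$ by exponential estimates. First I would use the translation invariance of the $H^2$ norm: replacing $x$ by $x+x_n^1$ turns $\psi_{C_1,\gamma_1+x_n^1}$ into $\psi_{C_1,\gamma_1}$, turns $\psi_{C_2,\gamma_2+x_n^2}$ into $\psi_{C_2,\gamma_2+d_n}$, and turns $\psi_{C_1,C_2,\gamma_1+x_n^1,\gamma_2+x_n^2}$ into $\psi_{C_1,C_2,\gamma_1,\gamma_2+d_n}$, where $d_n=x_n^2-x_n^1$ and $|d_n|\to\infty$. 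It therefore suffices to bound
\[
\bigl\|\,\psi_{C_1,\gamma_1}+\psi_{C_2,\gamma_2+d_n}-\psi_{C_1,C_2,\gamma_1,\gamma_2+d_n}\,\bigr\|_{H^2}.
\]
Passing to subsequences, I would handle $d_n\to+\infty$ and $d_n\to-\infty$ separately, as they are mirror images. Writing $a=\sqrt{C_1}<b=\sqrt{C_2}$ and using $(\Delta'/\Delta)'=(\log\Delta)''$, the $2$-soliton profile equals $3(\log\Delta)''$, and expanding the $2\times2$ Wronskian by product-to-sum identities collapses it to
\[
\Delta=2(b-a)\cosh(p+q)+2(b+a)\cosh(p-q),\qquad p=a(x-\gamma_1),\ \ q=b(x-\gamma_2-d_n),
\]
which is strictly positive, so $\log\Delta$ is smooth.

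Next I would run a matched-asymptotics argument on three regions, fixed by a cutoff that grows more slowly than $|d_n|$: a neighborhood of the first bump (near $x=\gamma_1$), a neighborhood of the second bump (near $x=\gamma_2+d_n$), and the complementary far region. Take $d_n\to+\infty$. In the first region $q\to-\infty$, so $p+q\to-\infty$ and $p-q\to+\infty$; each $\cosh$ is then dominated by a single exponential, and the two dominant terms share the common factor $e^{-q}$. Since $q$ is affine in $x$, this factor contributes nothing to $(\log\Delta)''$, and $\Delta$ collapses to a function of the single variable $p$, which up to centering is the single-soliton expression for speed $C_1$. Symmetrically, in the second region $\Delta$ becomes a function of $q$ alone, and in the far region the $2$-soliton profile and both single solitons are exponentially small. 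In each region I would estimate not only the difference between the $2$-soliton profile and the relevant single soliton but also its first two derivatives, since the $H^2$ norm requires control through order two; every discarded term carries a factor $e^{-c|d_n|}$ with $c>0$ depending only on $a$, $b$, and the cutoff rate.

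Assembling the three regional contributions would then give an $H^2$ bound of order $e^{-c|d_n|}$, and letting $n\to\infty$ (and recombining the two subsequences) would finish the proof. The step I expect to be the main obstacle is the analysis in the two bump regions, and specifically the determination of where the emergent single solitons are actually centered. In the first bump region, after the affine factor is discarded, the surviving part of $\Delta$ is the combination $(b-a)e^{-p}+(b+a)e^{p}$ rather than the symmetric $e^{-p}+e^{p}$; since $b-a\neq b+a$, this is a hyperbolic cosine in $p$ but with a shifted center, and likewise in the second region. The crux is therefore to decide, directly from the explicit formula, whether these shifts are genuine translations of the two bumps---in which case the comparison solitons would themselves have to be re-centered---or whether they are annihilated in the $H^2$ limit; this is precisely where the specific $\cosh/\sinh$ normalization of the $y_j$ must enter. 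Once the centering is pinned down in both bump regions, the remaining estimates are routine exponential tail bounds on the profile and its first two derivatives.
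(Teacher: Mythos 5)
The paper offers no argument here for you to compare against: its entire proof of this lemma is the single sentence ``This follows immediately from Lemma 3.6 of \cite{ABN}.'' A self-contained computational proof along the lines you describe would therefore be a genuinely different route, and your computation is correct as far as it goes: the reduction by translation invariance, the collapsed Wronskian $\Delta=2(b-a)\cosh(p+q)+2(b+a)\cosh(p-q)$, the observation that exponential factors with affine exponent drop out of $(\log\Delta)''$, and the three-region scheme with exponential tail control are all sound.

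The genuine gap is that you defer exactly the point that constitutes the lemma, and your own formula settles it --- in the direction that blocks your proof. In the slow-soliton region (take $d_n\to+\infty$) the surviving factor is
\[
(b-a)e^{-p}+(b+a)e^{p}\;=\;2\sqrt{b^2-a^2}\,\cosh(p+\delta),
\qquad
\delta=\frac12\log\frac{b+a}{b-a}=\frac12\log\frac{\sqrt{C_2}+\sqrt{C_1}}{\sqrt{C_2}-\sqrt{C_1}}>0,
\]
and $\delta$ is a constant independent of $n$. Hence the emergent slow soliton is centered at $\gamma_1+x_n^1-\delta/\sqrt{C_1}$ rather than at $\gamma_1+x_n^1$, and symmetrically the fast soliton is centered at $\gamma_2+x_n^2+\delta/\sqrt{C_2}$ (signs reversed when $d_n\to-\infty$). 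These displacements are half of the classical KdV phase shifts, and they are \emph{not} annihilated in the $H^2$ limit: carried to completion, your matched asymptotics shows that along $d_n\to+\infty$ the quantity in the lemma converges not to $0$ but to the positive constant
\[
\Bigl(\bigl\|\psi_{C_1,0}-\psi_{C_1,-\delta/\sqrt{C_1}}\bigr\|_{H^2}^2+\bigl\|\psi_{C_2,0}-\psi_{C_2,\delta/\sqrt{C_2}}\bigr\|_{H^2}^2\Bigr)^{1/2}.
\]
So this route proves the statement only with re-centered comparison solitons, $\psi_{C_1,\gamma_1\mp\delta/\sqrt{C_1}+x_n^1}+\psi_{C_2,\gamma_2\pm\delta/\sqrt{C_2}+x_n^2}$. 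That re-centered version is in fact all the paper ever uses (in the proof of part 2 of Theorem \ref{mainthm} the lemma only serves to give convergence to the translation-invariant family $S(C_1,C_2)$), and it is presumably the form in which Lemma 3.6 of \cite{ABN} is actually normalized; but to obtain the identity literally displayed in the statement you would have to show that $\gamma_1,\gamma_2$ in the Wronskian definition are the asymptotic soliton centers, and with the $\cosh/\sinh$ normalization of $y_1,y_2$ written in this paper they are not. Settling this centering question, which your proposal explicitly leaves open, is not a routine tail estimate; it is the entire content of the lemma, and as set up your argument cannot close without it.
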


\begin{proof}
This follows immediately from Lemma 3.6 of \cite{ABN}.
\end{proof}
  
The variational problem we are concerned with here has, as objective and constraint functionals, polynomial conservation laws for the KdV equation.  These are functionals of the form 
$$
E_j(u)=\intR P_j\left(u,u_x,u_{xx}, \dots, \frac{\partial^{j-2} u}{\partial x^{j-2}}\right)\ dx
$$
where $j \ge 2$ and $P_j$ is a polynomial function of its arguments, and is of degree $j$ in $u$.  As shown, for example, in \cite{L2}, for each $j \ge 2$ there exists such a functional which is a conservation law for KdV, meaning that if $u(x,t)$ is a solution of \eqref{KdV}, then (at least formally),
$$
\frac{d}{dt}\left[ E_j(u(x,t))\right]=0
$$
for all $t \in \mathbb R$.  In this paper we are most concerned with the functionals $E_2$, $E_3$, and $E_4$, which are given by
\begin{equation*}
\begin{aligned}
E_2(u)&= \int_{\mathbb R} \frac12 u^2\ dx,\\
E_3(u)&=\int_{\mathbb R}\left(\frac12 u_x^2 - \frac16 u^3\right)\ dx,\\
E_4(u)&=\int_{\mathbb R} \left(\frac12 u_{xx}^2 -\frac56 uu_x^2 + \frac{5}{32} u^4\right)\ dx.
\end{aligned}
\end{equation*}
(There is also a conserved functional of degree 1 in $u$, given by $E_1(u) = \int_{\mathbb R} u\ dx$, which we do not use here.)

Sobolev embedding theorems imply that for each $N \ge 0$, $E_{N+2}$ defines a continuous functional on the Sobolev space $H^N=H^N(\mathbb R)$ of real-valued functions whose derivatives up to order $N$ are in $L^2(\mathbb R)$.   For $u \in H^N$, we denote by $\nabla E_k(u)$ the Fr\'echet derivative of $E_k$ at $u$, which coincides with the Gateaux derivative of $E_k$ and is therefore defined as a linear functional on  $H^N$ by 
$$
\nabla E_k(u)[v]= \lim_{\epsilon \to 0} \frac{E_k(u+\epsilon v)-E_k(u)}{\epsilon},
$$ 
and identified with a function $\nabla E_k(u)(x)$ in the usual way, so that $\nabla E_k(u)[v]=\int_{\mathbb R} \nabla E_k(u)(x) \cdot v(x)\ dx$ for $v \in H^N$.
In particular, the Fr\'echet derivatives of $E_2$, $E_3$, and $E_4$ are given by
\begin{equation*}
\begin{aligned}
\nabla E_2(u)&= u,\\
\nabla E_3(u) &= -u_{xx} -\frac12 u,\\
\nabla E_4(u) &= u_{xxxx} + \frac53 u_{xx}+\frac56(u_x)^2 + \frac58 u^3.
\end{aligned} 
\end{equation*} 

For fixed $0 < C_1 < \dots < C_N$,
define
\begin{equation}
S=S(C_1,\dots,C_N)=\left\{\psi_{C_1,\dots,C_N;\gamma_1,\dots,\gamma_N}(x): (\gamma_1,\dots,\gamma_N) \in \mathbb R^N\right\}.
\label{defS}
\end{equation}

It is well-known (for a proof, see for example Theorem 3.8  of \cite{Al}) that  there exist constants $\lambda_2$, \dots $\lambda_{N+1}$ such that each $\psi \in S$ satisfies the ordinary differential equation
\begin{equation}
\nabla E_{N+2}(\psi) = \sum_{k=2}^{N+1}\lambda_k \nabla E_k(\psi).
\label{NsolitonODE}
\end{equation}
Thus $N$-soliton profiles
 are (non-isolated) critical points for constrained variational problems involving the functionals $E_j$.  The family of ordinary differential equations \eqref{NsolitonODE} is collectively known as the stationary KdV hierarchy.  Due to work of  Novikov, Its/Matveev, and Gelfand/Dickey in the 1970's (see \cite{Al} for references), it is known that each equation in the hierarchy has the structure of a completely integrable Hamiltonian system, and indeed can be explicitly solved by integration.

In the case $N=2$, equation \eqref{NsolitonODE} takes the form
\begin{equation}
\nabla E_4(\psi) = \lambda_2 \nabla E_2(\psi) + \lambda_3 \nabla E_3(\psi),
\label{2solitonODE}
\end{equation} 
or 
\begin{equation*} 
\psi''''+\frac53\psi\psi''+\frac56(\psi')^2  +\frac58\psi^3=\lambda_2\psi - \lambda_3 (\psi'' + \frac12 \psi^2).
 \end{equation*}

We will make crucial use below of the fact that, for all $k \ge 2$, $E_k$ is constant on $S$, with its value on $S$ given by
\begin{equation}
E_k(C_1,\dots,C_N):=(-1)^k \frac{36}{2k-1}\sum_{j=1}^N C_j^{(2k-1)/2}.
\label{Ekvalue}
\end{equation}
To prove \eqref{Ekvalue}, one first shows  that it is valid in the case of a single-soliton profile, when $N=1$  (cf.\ equation (3.18) of \cite{MS}).   For a general $N$-soliton profile $\psi(x)=\psi_{C_1,\dots,C_N; \gamma_1,\dots, \gamma_N}(x)$, one then proves \eqref{Ekvalue} by considering a solution $u(x,t)$ of \eqref{KdV} with $u(x,0)=\psi(x)$.  Since $u(x,t)$ resolves into widely separated single-soliton profiles as $t \to \infty$, it follows that $\displaystyle \lim_{t \to \infty} E_k(u(x,t)) = E_k(C_1,\dots,C_N)$.  But since $E_k$ is a conserved functional for KdV, it then follows that $E_k(\psi)=E_k(C_1,\dots,C_N)$ as well.

 We now consider the constrained variational problem of minimizing the functional $E_4$ over $H^2(\mathbb R)$, subject to the constraints that $E_2$ and $E_3$ be held constant.   This is the same variational problem considered, in the case $N=2$, in the stability theory for $N$-solitons presented in \cite{MS}.  Whereas it was shown in \cite{MS} that $2$-soliton profiles are local minimizers of the variational problem, we will show in Theorem \ref{mainthm} that they are global minimizers. (Note that conversely, by the uniqueness result Theorem \ref{unique}, every global minimizer is a $2$-soliton profile.)  Moreover, every minimizing sequence for the problem must converge strongly in $H^2(\mathbb R)$ to the set of minimizers.

We begin by identifying the set $\Sigma$ of feasible constraints for the variational problem.

  \begin{prop}  Suppose $(a,b) \in \mathbb R^2$.   Then   there exists a nonzero function $g$  in $H^2(\mathbb R)$
  such that $E_2(g)=a$ and $E_3(g)=b$ if and only if $(a,b) \in \Sigma$, where
  $$
  \Sigma = \left\{(a,b) \in \mathbb R^2: \text{$a>0$ and $b \ge -\mu a^{5/3}$}\right\}, 
  $$
 and
 $$
 \mu=\frac{36}{5}\left(\frac{1}{12}\right)^{5/3}.
 $$  
\label{defsig}
  \end{prop}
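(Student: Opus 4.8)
The plan is to prove the two inclusions separately: that membership in $\Sigma$ is \emph{necessary} for feasibility, and that it is \emph{sufficient}.

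\emph{Necessity.} If $g\in H^2$ is nonzero then $a=E_2(g)=\tfrac12\|g\|_{L^2}^2>0$, which gives the first condition. For the second I would establish the sharp lower bound $E_3(g)\ge -\mu\,(E_2(g))^{5/3}$, which is exactly $b\ge-\mu a^{5/3}$. Write $m(a)=\inf\{E_3(g):g\in H^1(\mathbb R),\ E_2(g)=a\}$. The homogeneity is fixed by scaling: with $g_\kappa(x)=\kappa^2 g(\kappa x)$ one computes $E_2(g_\kappa)=\kappa^3E_2(g)$ and $E_3(g_\kappa)=\kappa^5E_3(g)$, so choosing $\kappa=a^{1/3}$ gives $m(a)=a^{5/3}m(1)$. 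It then remains to evaluate $m(1)$, and this is precisely the single-soliton variational problem of Benjamin and of Cazenave--Lions (\cite{Be,CL}; see Section~\ref{sec:intro}): the infimum of $E_3$ subject to $E_2$ fixed is attained, exactly at the single-soliton profiles $\psi_{C,\gamma}$. By \eqref{Ekvalue} with $N=1$, the soliton with $E_2=1$ has $C=(1/12)^{2/3}$ and $E_3=-\tfrac{36}{5}C^{5/2}=-\mu$, whence $m(1)=-\mu$ and therefore $b=E_3(g)\ge m(a)=-\mu a^{5/3}$ for every $g\in H^2\subset H^1$. (Alternatively, $m(1)\ge-\mu$ follows directly from the sharp Gagliardo--Nirenberg inequality $\intR|g|^3\le C_{GN}\|g_x\|_{L^2}^{1/2}\|g\|_{L^2}^{5/2}$ after optimizing the resulting lower bound for $E_3$ in $\|g_x\|_{L^2}$, the extremal constant $C_{GN}$ being realized by the $\sech^2$ profile.)

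\emph{Sufficiency.} Fix $(a,b)$ with $a>0$ and $b\ge-\mu a^{5/3}$; I seek a nonzero $g\in H^2$ with $E_2(g)=a$ and $E_3(g)=b$. Consider the constraint set $M_a=\{g\in H^2(\mathbb R):E_2(g)=a\}$, i.e.\ the $L^2$-sphere $\|g\|_{L^2}^2=2a$ intersected with $H^2$, which is path-connected (e.g.\ via normalized linear interpolations through an auxiliary function) and on which $E_3$ is continuous. By the necessity step, $E_3\ge-\mu a^{5/3}$ on $M_a$, with equality at the soliton $\psi_{C,\gamma}$ having $E_2=a$. On the other hand $\sup_{M_a}E_3=+\infty$: for fixed $h_0\in M_a$ take the high-frequency modulations $g_n=c_n\,h_0(x)\cos(nx)$, with $c_n\to\sqrt2$ chosen so that $E_2(g_n)=a$; then $\intR g_n^3\to0$ by oscillation while $\|g_{n,x}\|_{L^2}^2\sim n^2$, so $E_3(g_n)\to+\infty$. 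Since $M_a$ is connected and $E_3$ is continuous, the image $E_3(M_a)$ is an interval; as it attains its minimum $-\mu a^{5/3}$ and is unbounded above, it equals $[-\mu a^{5/3},+\infty)$. Hence every such $b$ is realized, which is the claim.

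\emph{Main obstacle.} The only substantive ingredient is the sharp bound $E_3\ge-\mu E_2^{5/3}$, equivalently the exact value $m(1)=-\mu$. Scaling reduces this to the assertion that the constrained infimum of $E_3$ is attained precisely at single solitons (equivalently, that solitons extremize the relevant Gagliardo--Nirenberg inequality), which is the classical single-soliton result invoked above; the sign condition $a>0$, the connectedness of $M_a$, and the intermediate-value argument producing all larger values of $b$ are then elementary.
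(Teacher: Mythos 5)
Your proposal is correct, and its necessity half coincides with the paper's: both rest on the Benjamin/Cazenave--Lions sharp constrained minimization result that $E_3 \ge -\mu E_2^{5/3}$ on $H^1$ with equality exactly at single-soliton profiles (you add the explicit scaling reduction $m(a)=a^{5/3}m(1)$ and an alternative route via the sharp Gagliardo--Nirenberg inequality, which the paper leaves implicit in its citation). The sufficiency half is where you genuinely diverge. The paper uses the one-parameter family $g_\alpha(x)=\sqrt{\alpha}\,\psi_{C,\gamma}(\alpha x)$, for which $E_2(g_\alpha)=a$ identically in $\alpha$, $E_3(g_1)=-\mu a^{5/3}$, and $E_3(g_\alpha)\to+\infty$ as $\alpha \to \infty$; the one-variable intermediate value theorem then produces every admissible $b$ with no further input. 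You instead apply the connectedness form of the intermediate value theorem to $E_3$ on the entire constraint set $M_a$, which obliges you to verify two extra facts: path-connectedness of the $L^2$-sphere inside $H^2$ (correct, though stated tersely --- the normalized linear interpolation argument does need the auxiliary-function detour you mention to handle antipodal pairs) and unboundedness of $E_3$ on $M_a$ via the oscillatory modulations $c_n h_0(x)\cos(nx)$ (also correct: Riemann--Lebesgue kills the cubic term while the gradient term grows like $n^2$). What each approach buys: the paper's scaling family is shorter and entirely explicit, exploiting the fact that the dilation $\sqrt{\alpha}\,\psi(\alpha x)$ preserves the $E_2$ constraint exactly; your argument is heavier but more robust, since it never needs an explicit curve through the minimizer and would survive in settings where no scaling symmetry fixes the constraint.
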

  
  \begin{proof}   From Cazenave and Lions' variational characterization of solitary waves  (see for example Theorem 2.9 and Proposition 2.11 of \cite{Al2}), we know that for all $a>0$ and all $g \in H^1(\mathbb R)$ 
  such that $E_2(g)=a$, we have $E_3(g) \ge -\mu a^{5/3}$; and the minimum value $E_3(g)=-\mu a^{5/3}$ is attained when (and only when)
  $g=\psi_{C,\gamma}$, where $C=(a/12)^{2/3}$ and  $\gamma \in \mathbb R$ is arbitrary.   It follows that if $g$ is a nonzero function in $H^2(\mathbb R)$ with $E_2(g)=a$ and $E_3(g)=b$, then $(a,b) \in \Sigma$.
  
 Conversely, suppose $(a,b) \in \Sigma$.   For $\alpha > 0$, define  $g_\alpha(x)=\sqrt{\alpha}\psi_{C,\gamma}(\alpha x)$, where $C$ and $\gamma$ are as above. Then $E_2(g_\alpha)=a$ for every $\alpha>0$,
  $E_3(g_\alpha)=-\mu a^{5/3}$ when $\alpha = 1$, and $E_3(g_\alpha) \to +\infty$ as $\alpha \to \infty$.  Therefore, since $b \ge -\mu a^{5/3}$,  the intermediate value
     theorem implies the existence of some $\alpha \in [1,\infty)$ for which $E_3(g_{\alpha})=b$.  Thus by taking $g=g_{\alpha}$, we can satisfy $g \in H^2(\mathbb R)$,
     $E_2(g)=a$, and $E_3(g)=b$. 
\end{proof}  

For $(a,b) \in \Sigma$, define $\Lambda(a,b) \subseteq \mathbb R$ by
\begin{equation}
\Lambda(a,b)=\{E_4(g): \text{$g \in H^2(\mathbb R)$, $E_2(g)=a$, and $E_3(g)=b$}\}.
\label{deflambda}
\end{equation}
By the definition of $\Sigma$, the set on the right-hand side is nonempty, and we can therefore define 
\begin{equation}
J(a,b)= \inf \Lambda(a,b).
\label{defJ}
\end{equation}
(Notice that we do not exclude here the possibility that $J(a,b)=-\infty$.  However, as shown below at the beginning of Section \ref{sec:proofmain}, in fact $J(a,b) > - \infty$ for all $(a,b) \in \Sigma$.)

\begin{defn} Suppose $(a,b) \in \Sigma$.  We say that a function $\phi \in H^2(\mathbb R)$ is a {\rm minimizer for $J(a,b)$} if $E_2(\phi)=a$, $E_3(\phi)=b$, and $E_4(\phi)=J(a,b)$.
We say that a sequence $\left\{\phi_n\right\}$ of functions in $H^2(\mathbb R)$ is a {\rm minimizing sequence} for $J(a,b)$ if
$\displaystyle\lim_{n \to \infty} E_2(\phi_n) = a$, $\displaystyle\lim_{n \to \infty} E_3(\phi_n)=b$, and $\displaystyle\lim_{n\to \infty} E_4(\phi_n) = J(a,b)$.
\end{defn} 

A fact which is crucial for the proof of our main result below is that there is no choice of the numbers $\lambda_2$, $\lambda_3$ for which \eqref{2solitonODE} has any nontrivial solutions in $H^2(\mathbb R)$ besides $1$-soliton and $2$-soliton profiles:
 
\begin{thm}[\cite{Al}]
Suppose $\psi \in H^2$ is a nonzero solution of \eqref{2solitonODE}, in the sense of distributions, for some $\lambda_2, \lambda_3 \in \mathbb R$.  Then $\psi$ is either a 1-soliton or a 2-soliton profile for the KdV equation.  In other words we have
$\psi=\psi_{C_1, C_2; \gamma_1, \gamma_2}(x)$ for some real numbers $C_1$, $C_2$, $\gamma_1$, $\gamma_2$, with $0 \le C_1< C_2$.
\label{unique}
\end{thm}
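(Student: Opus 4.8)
The plan is to reduce the fourth-order distributional equation \eqref{2solitonODE} to an explicitly integrable ordinary differential equation by exploiting the integrable Hamiltonian structure of the stationary KdV hierarchy noted after \eqref{NsolitonODE}, and then to match the resulting decaying solutions against the known soliton profiles in \eqref{defS}.

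First I would upgrade the regularity of $\psi$. Since $\psi \in H^2(\mathbb R) \subset C^1(\mathbb R) \cap L^\infty(\mathbb R)$, each of the nonlinear terms $\psi\psi''$, $(\psi')^2$, $\psi^3$, and $\psi^2$ lies in $L^2(\mathbb R)$; reading \eqref{2solitonODE} as an identity $\psi'''' = (\text{lower-order terms}) \in L^2$ then gives $\psi \in H^4$, and a standard bootstrap yields $\psi \in H^k$ for every $k$, hence $\psi \in C^\infty(\mathbb R)$ with $\psi$ and all its derivatives tending to $0$ as $|x| \to \infty$. Linearizing \eqref{2solitonODE} about $0$ shows that the decay is in fact exponential, governed by the roots of the characteristic polynomial $r^4 + \lambda_3 r^2 - \lambda_2$; in particular $\psi$ is a homoclinic orbit of the ODE, connecting the equilibrium $0$ to itself.

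Next I would produce first integrals of \eqref{2solitonODE}. Multiplying \eqref{2solitonODE} by $\psi'$, every term becomes an exact derivative, and integrating once yields the energy integral
\[
I_1 = \psi'''\psi' - \tfrac12(\psi'')^2 + \tfrac56\psi(\psi')^2 + \tfrac{5}{32}\psi^4 + \tfrac{\lambda_3}{2}(\psi')^2 + \tfrac{\lambda_3}{6}\psi^3 - \tfrac{\lambda_2}{2}\psi^2 = \text{const}.
\]
Because \eqref{2solitonODE} is the $N=2$ member of the stationary KdV hierarchy, it carries a second, functionally independent first integral $I_2$, polynomial in $\psi,\psi',\psi'',\psi'''$, arising from the integrable structure of the hierarchy. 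Evaluating $I_1$ and $I_2$ along the decaying solution $\psi$ and letting $|x| \to \infty$ shows that $I_1 = I_2 = 0$, so the orbit lies on the common zero level set of the two integrals. Finally I would integrate on this level set. Following the standard construction for the stationary hierarchy, one introduces the quadratic-in-$\zeta$ polynomial $P(x,\zeta) = \zeta^2 + p_1(x)\zeta + p_0(x)$ whose coefficients are built from $\psi$ and its derivatives; for stationary solutions $P$ obeys a closed system of Dubrovin-type ordinary differential equations whose $x$-independent discriminant $R(\zeta)$ (a polynomial of degree $5$) is determined by $I_1$ and $I_2$. The condition $I_1 = I_2 = 0$ forces the branch points of the spectral curve $\mu^2 = R(\zeta)$ to coincide in pairs, which is exactly the reflectionless degeneration; integrating the degenerate Dubrovin system then produces $\psi$ in the Wronskian determinant form of \eqref{defS}, the coincident branch points fixing $C_1,C_2$ with $0 \le C_1 < C_2$ and the integration constants fixing $\gamma_1,\gamma_2$. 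In the further-degenerate case $C_1 = 0$ the reduction collapses to a single first-order equation $(\psi')^2 = Q(\psi)$ with $Q$ cubic, whose solution is the $1$-soliton profile.

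The main obstacle is the middle step: exhibiting the second first integral $I_2$ and, more seriously, proving that the common zero level set $\{I_1 = I_2 = 0\}$ contains no bounded solutions other than the soliton profiles. Producing $I_1$ and checking $I_1 = 0$ is routine, but ruling out spurious homoclinic or nondecaying orbits on this level set, and carrying out the explicit integration so as to recover precisely the determinant form of \eqref{defS}, is where the integrable (spectral) structure must be used in an essential way. It is exactly this degeneration analysis that is carried out in \cite{Al}.
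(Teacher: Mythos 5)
The first thing to say is that the paper contains no proof of Theorem \ref{unique} to compare against: the theorem is imported wholesale from \cite{Al}, and the only commentary the paper offers is the one-sentence remark that the proof in \cite{Al} ``relies on the fact that \eqref{2solitonODE} can be explicitly integrated.'' Measured against that, your outline is a faithful and technically sound expansion of the cited strategy. The steps you actually carry out are correct: the bootstrap $H^2 \to H^4 \to C^\infty$ works exactly as you say, the exponential decay follows from the characteristic polynomial $r^4 + \lambda_3 r^2 - \lambda_2$ of the linearization, and your first integral $I_1$ is right (each term of \eqref{2solitonODE} multiplied by $\psi'$ is indeed an exact derivative, and $I_1$ vanishes on a decaying solution).

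However, as a proof your proposal has a genuine gap, and it is the one you name yourself: everything that makes the theorem true lives in the step you defer to \cite{Al}. Exhibiting the second integral $I_2$, and above all proving that the common zero level set $\{I_1 = I_2 = 0\}$ carries no $H^2$ solutions other than the profiles of \eqref{defS} --- equivalently, that for values of $(\lambda_2,\lambda_3)$ not corresponding to a $1$- or $2$-soliton, every solution of \eqref{2solitonODE} is singular or fails to decay --- is the entire mathematical content of the uniqueness statement. The paper itself stresses that this is the hard part: it points out at the end of Section \ref{sec:main} that the formal explicit integration of \eqref{ELN} goes through for every $N$, and that the obstruction to generalizing Theorem \ref{unique} (and hence Theorem \ref{mainthm}) to $N \ge 3$ is precisely the difficulty of ``proving that solutions corresponding to other values of $(\lambda_2,\dots,\lambda_{N+1})$ are singular.'' So your text is an accurate roadmap of the known argument, but it does not prove the theorem; since the statement is presented in this paper as an external result, deferring to \cite{Al} is legitimate in context, but it means your attempt amounts to an annotated version of the citation rather than an independent proof.
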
  
 
The proof given in \cite{Al} for Theorem \ref{unique} relies on the fact that, as mentioned above,  \eqref{2solitonODE} can be explicitly integrated.  

A corollary of Theorem \ref{unique} is that if minimizers for $J(a,b)$ exist, then they must of necessity be 1-soliton or 2-soliton profiles:

\begin{cor}  Suppose $a$ and $b$ are real numbers and  $u \in H^2$ is a nonzero minimizer for $J(a, b)$.   Then $u$ must be either a 1-soliton profile or a 2-soliton profile.   
\label{minsol}
\end{cor}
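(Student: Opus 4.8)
The plan is to show that any nonzero minimizer $u$ of $J(a,b)$ satisfies the Euler--Lagrange equation \eqref{2solitonODE} for suitable real multipliers $\lambda_2,\lambda_3$, and then to invoke Theorem \ref{unique}. Since $u$ minimizes the functional $E_4$ on $H^2(\mathbb R)$ subject to the two constraints $E_2(g)=a$ and $E_3(g)=b$, the natural route is the Lagrange multiplier rule, which would furnish $\lambda_2,\lambda_3\in\mathbb R$ with
$$
\nabla E_4(u)=\lambda_2\,\nabla E_2(u)+\lambda_3\,\nabla E_3(u).
$$
This is precisely \eqref{2solitonODE}, interpreted as an identity of distributions (the only term that is not a priori an $L^2$ function is the $u_{xxxx}$ appearing in $\nabla E_4(u)$). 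Theorem \ref{unique} then applies directly and yields that $u$ is a $1$-soliton or a $2$-soliton profile, as claimed.

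The one point requiring care, and which I expect to be the main obstacle, is verifying the constraint qualification that legitimizes the multiplier rule: that the derivative at $u$ of the constraint map $G(g)=(E_2(g),E_3(g))$, namely the linear map $v\mapsto(\nabla E_2(u)[v],\nabla E_3(u)[v])$, is onto $\mathbb R^2$. This is equivalent to the linear independence of the two functionals $\nabla E_2(u)=u$ and $\nabla E_3(u)=-u_{xx}-\tfrac12 u$. Suppose they were dependent, so that $\alpha u+\beta(-u_{xx}-\tfrac12 u)=0$ for some $(\alpha,\beta)\ne(0,0)$. If $\beta=0$, then $\alpha u=0$ forces $u=0$, contradicting nontriviality. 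If $\beta\ne0$, then $u$ solves the constant-coefficient equation $u''=\kappa u$ with $\kappa=\alpha/\beta-\tfrac12$, whose only solution lying in $L^2(\mathbb R)$ is $u\equiv0$ (the homogeneous solutions are exponential or trigonometric, hence not square-integrable), again a contradiction. Thus for nonzero $u\in H^2(\mathbb R)$ the two gradients are linearly independent, the derivative of $G$ at $u$ has maximal rank, and the Lagrange multiplier rule applies.

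The remaining ingredients are routine. The functionals $E_2$, $E_3$, and $E_4$ are of class $C^1$ (indeed smooth) on $H^2(\mathbb R)$, with Fr\'echet derivatives as recorded in Section \ref{sec:main}, owing to the Sobolev embedding $H^2(\mathbb R)\hookrightarrow L^\infty(\mathbb R)$ together with the explicit polynomial form of the integrands; the abstract Lagrange multiplier theorem for a codimension-two constraint of maximal rank then produces the real numbers $\lambda_2,\lambda_3$. With \eqref{2solitonODE} established for $u$, Corollary \ref{minsol} follows at once from Theorem \ref{unique}.
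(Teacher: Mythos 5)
Your overall strategy --- the Lagrange multiplier rule followed by Theorem \ref{unique} --- is the same as the paper's, but your verification of the constraint qualification contains a genuine error, and the degenerate case you claim to rule out is exactly the case that must be handled. The formula $\nabla E_3(u)=-u_{xx}-\tfrac12 u$ you used is a typo in the paper: since $E_3(u)=\int\left(\tfrac12 u_x^2-\tfrac16 u^3\right)dx$, the correct Fr\'echet derivative is $\nabla E_3(u)=-u_{xx}-\tfrac12 u^2$ (this is confirmed by the expanded form of \eqref{2solitonODE} displayed in the paper, where the $E_3$ term appears as $-\lambda_3\left(\psi''+\tfrac12\psi^2\right)$). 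With the correct formula, linear dependence $\nabla E_3(u)=\lambda\nabla E_2(u)$ reads $-u_{xx}-\tfrac12 u^2=\lambda u$, a \emph{nonlinear} ODE --- precisely the solitary-wave equation, i.e.\ \eqref{NsolitonODE} with $N=1$ --- and it does have nonzero $H^2$ solutions, namely the 1-soliton profiles $\psi_{C,\gamma}$. So your claim that the two gradients are linearly independent at every nonzero $u\in H^2$ is false, and the failure is not a vacuous technicality: in part 1 of Theorem \ref{mainthm} (the case $b=-\mu a^{5/3}$) the minimizers \emph{are} 1-soliton profiles, i.e.\ exactly the points where the constraint qualification breaks down. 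Your argument, as written, would let you conclude that every minimizer solves \eqref{2solitonODE}, which is fine, but the justification collapses at the very functions the corollary is partly about.

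The paper closes this gap with a dichotomy rather than by excluding degeneracy. If $u$ is a regular point of the constraints, the multiplier rule (Theorem 2, p.~188 of \cite{Lu}) gives \eqref{2solitonODE} and Theorem \ref{unique} applies, as in your proposal. If $u$ is \emph{not} regular, then since $u\neq 0$ implies $\nabla E_2(u)=u\neq 0$, dependence forces $\nabla E_3(u)=\lambda\nabla E_2(u)$ for some $\lambda\in\mathbb R$, and the only nontrivial $H^2$ solutions of that equation are 1-soliton profiles (Theorem 4.2 of \cite{Al}); so $u$ is a 1-soliton profile and the conclusion of the corollary holds in this case as well. Your proof becomes correct once you replace the false linear-independence claim with this second branch of the argument.
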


\begin{proof} 
According to Theorem 2 on page 188 of \cite{Lu}, if $u$ is a regular point of the constraint functionals
$E_2$ and $E_3$, meaning that the Fr\'echet derivatives $\nabla E_2(u)$ and $\nabla E_3(u)$ are linearly independent, then there must exist real numbers $\lambda_2$ and $\lambda_3$ such that the equation $\nabla E_4(u) = \lambda_2 \nabla E_2(u) + \lambda_3 \nabla E_3(u)$ holds,
at least in the sense of distributions.  In this case, by Theorem \ref{unique}, $u$ is either a 1-soliton or a 2-soliton profile.  On the other hand, if $u$ is not a regular point of the constrained functionals, then since $u$ is nonzero and therefore $\nabla E_2(u) \ne 0$, it must be the case that $\nabla E_3(u)= \lambda \nabla E_2(u)$ for some $\lambda \in \mathbb R$.    But it is an elementary exercise (see for example Theorem 4.2 of \cite{Al}) to show that the only possible nontrivial solutions of $\nabla E_3(u)= \lambda \nabla E_2(u)$ in $H^2$ are 1-soliton profiles.  
\end{proof} 

The preceding result of course leaves open the question of whether any 1-soliton or 2-soliton profiles are in fact minimizers for $J(a,b)$.  Our main result determines the set of values of $(a,b)$ within $\Sigma$ for which minimizers for $J(a,b)$ exist; 
and for such values of $(a,b)$ determines the value of $J(a,b)$, describes all the minimizers for $J(a,b)$,
and describes the behavior of minimizing sequences for $J(a,b)$.
If $S \subseteq H^2(\mathbb R)$, we say that a sequence $\{\phi_n\}$ converges to $S$ in $H^2(\mathbb R)$ norm if
\begin{equation*}
d(\phi_n,S)= \inf_{\psi \in S} \|\phi_n - \psi\|_{H^2(\mathbb R)} \to 0 \quad \text{as $n \to \infty$,}
\end{equation*}
or, equivalently, if there exists a sequence $\{\psi_n\}$ of elements of $S$ such that
$$
\lim_{n \to \infty}\|\phi_n -\psi_n\|_{H^2(\mathbb R)} = 0.
$$ 

\begin{thm}  Suppose $(a,b) \in \Sigma$; that is, $a > 0$ and $b \ge -\mu a^{5/3}$, where $\mu = \frac{36}{5}\left(\frac{1}{12}\right)^{5/3}$.
 
\begin{enumerate} 
 
\item  
 If 
 \begin{equation}
 b=-\mu a^{5/3},
 \label{abeq}
 \end{equation}
  then every minimizing sequence for $J(a,b)$ converges to $S(C)$ in $H^2(\mathbb R)$ norm, where $S(C)$ is
 as defined in \eqref{defS}, with $C = (a/12)^{2/3}= (-5b/36)^{2/5}$.
 Every element of $S(C)$ is a minimizer for $J(a,b)$, and $J(a,b)=E_4(C)$.  
 
 \item 
 If 
 \begin{equation}
 \frac{-\mu a^{5/3}}{2^{2/3}}  > b >  -\mu a^{5/3},
 \label{abineq1}
 \end{equation}
 then every minimizing sequence for $J(a,b)$ converges to $S(C_1,C_2)$ in $H^2(\mathbb R)$ norm,
 where $(C_1,C_2)$ is the unique pair of numbers such that $0 < C_1 < C_2$, $E_2(C_1,C_2)=a$, and $E_3(C_1,C_2)=b$.
 Every element of $S(C_1,C_2)$ is a minimizer for $J(a,b)$, and $J(a,b)=E_4(C_1,C_2)$.

\item
 If 
\begin{equation}
 b \ge -\frac{\mu a^{5/3}}{2^{2/3}},
\label{abineq2}
\end{equation}
 then there do not exist any minimizers for $J(a,b)$ in $H^2$.
 
 \end{enumerate}
 \label{mainthm}
 \end{thm}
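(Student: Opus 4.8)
The plan is to dispose of part (3) at once and then to reduce parts (1) and (2) to a finite-dimensional optimization over soliton speeds, which I feed into the profile decomposition of Section \ref{sec:reviewKdV}. Part (3) is immediate from Corollary \ref{minsol}: any minimizer is a $1$- or $2$-soliton profile, so it suffices to compute the range of $(E_2,E_3)$ realized by such profiles. Writing $s_j=\sqrt{C_j}$ and using \eqref{Ekvalue}, a $1$-soliton lies exactly on $b=-\mu a^{5/3}$, while for a genuine $2$-soliton $0<C_1<C_2$ the quantity $-E_3=\frac{36}{5}(s_1^5+s_2^5)$, at fixed $E_2=12(s_1^3+s_2^3)$, is strictly minimized in the degenerate limit $s_1=s_2$ by strict convexity of $t\mapsto t^{5/3}$, with limiting value exactly $\mu a^{5/3}/2^{2/3}$. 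Hence every $1$- or $2$-soliton profile has $b<-\mu a^{5/3}/2^{2/3}$, so no minimizer exists when \eqref{abineq2} holds. For parts (1) and (2) the upper bounds are equally immediate: the relevant soliton profile is admissible for the stated $(a,b)$ and has $E_4$ equal to $E_4(C)$, respectively $E_4(C_1,C_2)$, by \eqref{Ekvalue}, so $J(a,b)$ is at most the claimed value; the content is the matching lower bound together with the convergence statement.

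Before turning to that, I would record the coercivity estimate promised at the start of Section \ref{sec:proofmain}. Since $E_2$ controls $\|u\|_{L^2}$, a Gagliardo--Nirenberg interpolation bounds $\bigl|\int u u_x^2\bigr|$ and $\int u^4$ by a small multiple of $\|u_{xx}\|_{L^2}^2$ plus a constant depending only on $\|u\|_{L^2}$, so the leading term $\tfrac12\|u_{xx}\|_{L^2}^2$ of $E_4$ is coercive. This shows $J(a,b)>-\infty$ and that every minimizing sequence is bounded in $H^2(\mathbb R)$.

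Next I would carry out the finite-dimensional analysis of Section \ref{sec:finitemin}. Because $E_2,E_3,E_4$ are asymptotically additive over widely separated bumps and take the values \eqref{Ekvalue} on multi-solitons, minimizing $E_4$ over sums of solitons with prescribed $(E_2,E_3)$ amounts to minimizing $\sum_i s_i^7$ subject to $\sum_i s_i^3$ and $\sum_i s_i^5$ fixed, over finite families of positive reals. The Lagrange stationarity condition reads $s_i^4+\nu s_i^2-\lambda=0$, a quadratic in $s_i^2$, so any critical configuration uses at most two distinct speeds. Analyzing this two-speed family and its boundary yields the trichotomy: the optimum is the single soliton of speed $C$ on \eqref{abeq}, the two distinct speeds $\{C_1,C_2\}$ throughout \eqref{abineq1}, and is unattained on \eqref{abineq2}; moreover the optimal two-speed energy equals $E_4(C_1)+E_4(C_2)=E_4(C_1,C_2)$. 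I would then run the profile decomposition on a bounded minimizing sequence $\{\phi_n\}$: Section \ref{sec:reviewKdV} supplies profiles $\psi^{(j)}$ along pairwise-divergent translations, and spatial separation makes $E_2,E_3,E_4$ asymptotically additive over the profiles and the remainder. Each nonzero profile is admissible for its own data $(a_j,b_j)=(E_2(\psi^{(j)}),E_3(\psi^{(j)}))$, so $E_4(\psi^{(j)})\ge J(a_j,b_j)$; combined with the subadditivity of $J$ obtained by gluing near-minimizers far apart and the finite-dimensional computation, the chain of inequalities collapses to equalities. This forces the remainder to carry no $E_2$-mass, hence to vanish in $H^2$, and forces $(a,b)$ to split exactly as in the optimal finite-dimensional configuration, so each profile is a single soliton of speed $C$, respectively of speed in $\{C_1,C_2\}$. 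Reassembling, under \eqref{abeq} the sequence converges to $S(C)$, while under \eqref{abineq1} the two widely separated single solitons are, by Lemma \ref{sumoftwosols}, $H^2$-asymptotic to a $2$-soliton profile, so the sequence converges to $S(C_1,C_2)$ and $J(a,b)=E_4(C_1,C_2)$.

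I expect the main obstacle to be the rigidity step just described: proving that the profile decomposition \emph{saturates}, that is, that no $E_2$-mass escapes into the remainder and that $(a,b)$ distributes among the profiles precisely according to the optimal finite-dimensional split, with each profile pinned to the single-soliton boundary curve $b=-\mu a^{5/3}$. This rests on the strict form of the finite-dimensional result --- that any splitting other than into the constituent speeds of the stated soliton strictly increases $\sum_j E_4(\psi^{(j)})$ --- together with a careful verification that the cubic and quartic densities of $E_3$ and $E_4$, including their derivative terms, are genuinely asymptotically additive across separated profiles and negligible on the small remainder. Once this is in place, the convergence assertions and part (3) are comparatively routine.
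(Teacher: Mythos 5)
Your overall architecture is the same as the paper's: coercivity of $E_4$ under the constraints, profile decomposition of a minimizing sequence, identification of each profile as a minimizer of its own constrained problem (your ``gluing'' argument is the paper's substitution argument, Lemma \ref{gimin}), identification of minimizers as $1$- or $2$-solitons via Corollary \ref{minsol} and Theorem \ref{unique}, a finite-dimensional power-sum optimization over soliton speeds, and reassembly using Lemma \ref{sumoftwosols}. Your part (3) is correct, and in fact slightly cleaner than the paper's argument: strict convexity of $t\mapsto t^{5/3}$ replaces the monotonicity analysis of the function $g$ in Lemma \ref{delgamlemma}. The genuine problems are in the rigidity step, which you rightly flag as the main obstacle, but for which the mechanism you propose would not close.

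First, the chain of inequalities does not collapse by subadditivity of $J$ alone. The profile decomposition leaves deficits carried by the vanishing remainder $w_n$, and these deficits point in \emph{mixed} directions: with $(a_j,b_j)$ the profiles' constraint values, one only gets $\sum_j a_j \le a$ and $\sum_j b_j \le b$ (the remainder's cubic term vanishes, so $E_3(w_n)$ is asymptotically $\tfrac12\int(w_n')^2 \ge 0$). Subadditivity therefore compares $J(a,b)$ with $J(a',b')$ at a \emph{different} point $(a',b')$, and to finish you would need monotonicity of $J$ under decreasing both arguments. That monotonicity is true on the soliton-representable region, but only because $J$ there equals the soliton value of $E_4$ --- which is precisely the lower bound the theorem is trying to prove, so invoking it is circular. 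The paper avoids this by proving the power-sum lemmas with exactly these mixed inequality constraints ($\sum x_i^3\le$, $\sum x_i^5\ge$, $\sum x_i^7\le$; Lemmas \ref{zy7lem}, \ref{ruleoutthreepos}, \ref{ruleoutthreeposinf}), whose rigidity conclusion simultaneously pins the speeds \emph{and} squeezes all three remainder terms in Lemma \ref{Dineqlem2}, forcing $\|w_n\|_{H^2}\to 0$ (Lemma \ref{newreducelem}); your equality-constrained formulation cannot see this. Second, your Lagrange-multiplier observation that critical configurations use at most two \emph{distinct} speeds does not imply at most two \emph{solitons}: a configuration such as $(\gamma,\gamma,\delta)$ --- three solitons, two with equal speed --- has only two distinct speeds and is critical. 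Ruling such configurations out as minimizers is where the paper's hardest work lies: the bordered-Hessian second-derivative test and connectedness argument of Lemma \ref{3ge2} show they are local maxima, the induction of Lemma \ref{strictxn7} propagates this to $n$ variables with a quantitative error term, and Lemma \ref{ruleoutthreeposinf} extends it to infinitely many profiles. None of this is covered by ``analyzing the two-speed family and its boundary,'' so as written your finite-dimensional step is an assertion of the needed result rather than a proof of it.
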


An immediate consequence of Theorem \ref{mainthm} is a stability result for 2-soliton solutions of the KdV equation. 
 This recovers (by a different proof) the special case $N=2$ of the general result for $N$-soliton solutions 
given by Maddocks and Sachs in \cite{MS}:
 
\begin{cor}  Suppose  $0 < C_1 < C_2$.  Then every minimizing sequence $\left\{\phi_n\right\}$ for
 $J(E_2(C_1,C_2),E_3(C_1,C_2))$ converges to $S(C_1,C_2)$ in $H^2(\mathbb R)$ norm.  
 Moreover,  $S=S(C_1,C_2)$ is stable, in the sense that for every $\epsilon > 0$ there exists $\delta > 0$ such that
if $u_0 \in H^2(\mathbb R)$ and $d(u_0,S) < \delta$, then $d(u(\cdot,t),S) < \epsilon$ for all $t > 0$.
\label{stability}
\end{cor}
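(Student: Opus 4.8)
The plan is to deduce both assertions from Theorem \ref{mainthm}, using only the feasibility inequalities and the conservation of $E_2,E_3,E_4$ along the KdV flow. First I would establish the convergence statement by checking that the pair $(a,b)=(E_2(C_1,C_2),E_3(C_1,C_2))$ lies in the open region \eqref{abineq1}, so that Theorem \ref{mainthm}(2) applies. Writing $p=C_1^{3/2}$ and $q=C_2^{3/2}$, formula \eqref{Ekvalue} gives $a=12(p+q)$ and $b=-\frac{36}{5}\bigl(p^{5/3}+q^{5/3}\bigr)$, while a short computation using $\mu=\frac{36}{5}(1/12)^{5/3}$ yields $-\mu a^{5/3}=-\frac{36}{5}(p+q)^{5/3}$. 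Thus the two inequalities defining \eqref{abineq1} are equivalent, respectively, to
\[
p^{5/3}+q^{5/3} < (p+q)^{5/3} \qquad\text{and}\qquad \frac{(p+q)^{5/3}}{2^{2/3}} < p^{5/3}+q^{5/3}.
\]
The first is the strict superadditivity of $r \mapsto r^{5/3}$ on $(0,\infty)$, and the second is the strict power-mean (Jensen) inequality for the convex function $r \mapsto r^{5/3}$; both are strict precisely because $p$ and $q$ are positive and distinct, i.e.\ because $0<C_1<C_2$. Since by \eqref{Ekvalue} the given pair $(C_1,C_2)$ satisfies $E_2(C_1,C_2)=a$ and $E_3(C_1,C_2)=b$, the uniqueness asserted in Theorem \ref{mainthm}(2) identifies it with the pair appearing there, and so every minimizing sequence for $J(a,b)$ converges to $S=S(C_1,C_2)$ in $H^2(\mathbb R)$ norm.

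For stability I would run the standard Lyapunov-functional argument by contradiction. Suppose $S$ is not stable: then there exist $\epsilon_0>0$, initial data $u_{0,n}\in H^2(\mathbb R)$ with $d(u_{0,n},S)\to 0$, and times $t_n>0$ such that $d(u_n(\cdot,t_n),S)\ge\epsilon_0$, where $u_n$ solves \eqref{KdV} with $u_n(\cdot,0)=u_{0,n}$. Invoking the global well-posedness of KdV in $H^2(\mathbb R)$ and the conservation of $E_2,E_3,E_4$ along the flow (classical facts, which I would cite), I have $E_j\bigl(u_n(\cdot,t_n)\bigr)=E_j(u_{0,n})$ for $j=2,3,4$. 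Choosing $\psi_n\in S$ with $\|u_{0,n}-\psi_n\|_{H^2}\to 0$, using that $S$ is bounded in $H^2(\mathbb R)$ and that each $E_j$ is continuous on $H^{j-2}(\mathbb R)$, and using that $E_j$ is constant on $S$ with values $a$, $b$, and $E_4(C_1,C_2)=J(a,b)$ (the first two by \eqref{Ekvalue}, the last equality by Theorem \ref{mainthm}(2)), I obtain $E_2(u_{0,n})\to a$, $E_3(u_{0,n})\to b$, and $E_4(u_{0,n})\to J(a,b)$. By conservation the same limits hold for $u_n(\cdot,t_n)$, so $\{u_n(\cdot,t_n)\}$ is a minimizing sequence for $J(a,b)$; the first assertion then forces $d(u_n(\cdot,t_n),S)\to 0$, contradicting $d(u_n(\cdot,t_n),S)\ge\epsilon_0$.

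The one auxiliary fact to record is the boundedness of $S$ in $H^2(\mathbb R)$: since the $H^2$ norm is translation invariant it depends only on $\gamma_1-\gamma_2$, and this one-parameter function is continuous with finite limits as $\gamma_1-\gamma_2\to\pm\infty$ by Lemma \ref{sumoftwosols}, hence bounded. I do not expect a genuine obstacle: all the substantive content lives in Theorem \ref{mainthm}, and the corollary is a routine concentration-compactness deduction, the only points demanding attention being the correct orientation and strictness of the two feasibility inequalities (which rely on $0<C_1<C_2$) and the transport of the conserved values $E_j$ by the flow.
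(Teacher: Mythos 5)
Your proposal is correct and follows essentially the same route as the paper's proof: check that $(a,b)=(E_2(C_1,C_2),E_3(C_1,C_2))$ satisfies \eqref{abineq1} so that part 2 of Theorem \ref{mainthm} applies, then obtain stability by the standard contradiction argument using continuity and conservation of $E_2$, $E_3$, $E_4$ along the KdV flow, which exhibits $\{u_n(\cdot,t_n)\}$ as a minimizing sequence for $J(a,b)$. The only differences are cosmetic: you verify \eqref{abineq1} by direct superadditivity and convexity inequalities where the paper simply cites Lemma \ref{delgamlemma}, and you make explicit the boundedness of $S$ in $H^2(\mathbb R)$ (needed to pass from $d(u_{0,n},S)\to 0$ to convergence of the $E_j$ values, since the $E_j$ are uniformly continuous only on bounded sets), a point the paper leaves implicit.
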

 
{\it Remark.} It follows from this stability result that there are $C^1$ functions $\gamma_1(t)$ 
and $\gamma_2(t)$ defined for $t \ge 0$ such that
\begin{equation*}
\|u(\cdot,t)-\psi_{C_1,C_2;\gamma_1(t),\gamma_2(t)}\|_{H^2(\mathbb R)} < \epsilon
\end{equation*}
and $|\gamma_1'(t)-C_1|< \epsilon$, $|\gamma_2'(t)-C_2|< \epsilon$ for all $t>0$.  See \cite{ABN}.

\bigskip

We now sketch the proof of Theorem \ref{mainthm}.  A detailed proof will be given in the sections which follow.  

  Suppose $(a,b) \in \Sigma$ with 
 $\displaystyle -\mu a^{5/3} \le b < \frac{-\mu a^{5/3}}{2^{2/3}}$, and let    $\{\phi_n\}$ be an arbitrary minimizing sequence for $J(a,b)$.  Note that (since subsequences of minimizing sequences for $J(a,b)$ are again minimizing sequences), to prove part 1 or part 2 of the Theorem, it is enough to show that every minimizing sequence for $J(a,b)$ has a subsequence which exhibits the desired convergence.   Therefore, in what follows we can freely pass to subsequences of $\{\phi_n\}$, when necessary.
 
We can apply the profile decomposition, in the form of Corollary \ref{combinecases},  to the sequence $\{\rho_n\}$ defined by
 $$\rho_n= |\phi_n|^2 + |\phi_n'|^2+|\phi_n''|^2.$$
 It follows that, by passing to a subsequence (which we continue to denote by $\{\phi_n\}$), we can decompose $\phi_n$ as  
\begin{equation*}
\phi_n = \sum_{i=1}^n v^i_n + w_n,
\end{equation*}
where  the sequence $\{w_n\}$
 is vanishing in the sense of Definition \ref{defvanish}, and for each $i \in \mathbb N$ the sequence $\{v^i_n\}_{n \in \mathbb N}$ concentrates around some sequence
$\{x_n^i\}_{n \in \mathbb N}$, in the sense of Definition \ref{defconcentrate}.  From the concentration property
of $\{v_n^i\}_{n \in \mathbb N}$ and the fact that $\{\phi_n\}$ is a minimizing sequence,
 it follows that the sequence $\{v^i_n\}_{n \in \mathbb N}$ can be suitably translated so that it converges, 
weakly in $H^2(\mathbb R)$ and strongly in $H^1(\mathbb R)$, to a minimizer $g_i$ for the variational problem
\begin{equation*}
E_4(g_i)= \inf \left\{E_4(\phi): \phi \in H^2(\mathbb R), E_2(\phi)=a_i, E_3(\phi)=b_i\right\}
\end{equation*}
for some real numbers $a_i$, $b_i$.  As a critical point of this constrained variational problem, $\psi = g_i$ 
must satisfy the Euler-Lagrange equation \eqref{2solitonODE}. 
 From Theorem  \ref{unique} it then follows that for each $i$ we have $g_i =\psi_{D_{1i},D_{2i}}$ for
 some numbers $D_{1i}$ and $D_{2i}$ with $0\le D_{1i}< D_{2i}$.

In parts 1 and 2 of Theorem \ref{mainthm}, our assumption on $(a,b)$ implies that there exist numbers $C_1$ and $C_2$ with $0 \le C_1 < C_2$ such that 
$E_2(\psi_{C_1,C_2})=a$ and $E_3(\psi_{C_1,C_2})=b$.   
Therefore, by definition of $J(a,b)$, we have that
$$
J(a,b) \le E_4(\psi_{C_1,C_2}) = (36/7)\left(C_1^{7/2}+C_2^{7/2}\right).
$$ 
From the profile decomposition and the fact that $\{\phi_n\}$ is a minimizing sequence, we can obtain that 
\begin{equation*}
\begin{aligned}
\sum_{i=1}^\infty E_2(g_i)&= 36\sum_{i=1}^\infty\left(D_{1i}^3+D_{2i}^3\right)\le \lim_{n \to \infty}E_2(\phi_n)= 36\left(C_1^3 + C_2^3\right)\\
\sum_{i=1}^\infty E_3(g_i)& = \frac{-36}{5}\sum_{i=1}^\infty\left(D_{1i}^5+D_{2i}^5\right)\le \lim_{n \to \infty}E_3(\phi_n)= \frac{-36}{5} \left(C_1^5 + C_2^5\right)\\
\sum_{i=1}^\infty E_4(g_i)& = \frac{36}{7} \sum_{i=1}^\infty\left(D_{1i}^7+D_{2i}^7\right)\le \lim_{n \to \infty} E_4(\phi_n) \le \frac{36}{7}\left(C_1^7 + C_2^7\right).\\
\end{aligned}
\end{equation*} 

Permuting the terms of the sequence $(D_{11}^{1/2}, D_{21}^{1/2}, D_{21}^{1/2}, D_{22}^{1/2}, D_{31}^{1/2}, D_{32}^{1/2}, \dots)$ so that they form a
decreasing sequence $(x_1,x_2, x_3, \dots)$; and defining $y_1 = C_2^{1/2}$ and $y_2=C_1^{1/2}$, we thus have that
\begin{equation*}
\begin{aligned}
\sum_{i=1}^\infty x_i^3 &\le y_1^3 + y_2^3\\
\sum_{i=1}^\infty x_i^5 &\ge y_1^5 + y_2^5\\
\sum_{i=1}^\infty x_i^7 &\le y_1^7 + y_2^7.
\end{aligned}
\end{equation*}
We analyze this system of inequalities in Section \ref{sec:finitemin}, where we show (cf.\ Lemma \ref{zy7lem} and Lemma \ref{ruleoutthreeposinf})
 that it can only be satisfied if  $x_1=y_1$, $x_2=y_2$, and $x_i=0$ for all $i \ge 3$. This can be interpreted as
 saying that, among the $N$-soliton profiles of the KdV equation, the only ones which could possibly solve the
 variational problem are $1$-soliton profiles (in the case when $C_1=0$) and $2$-soliton profiles (in the case when $C_1 > 0$).

This information, combined with the control on the functions $\{v_n^i\}$ and $w_n$ afforded by the profile decomposition, is
enough to allow us to deduce that the functions in the minimizing sequence $\{\phi_n\}$ are either of the form
\begin{equation*}
\phi_n(x)=\psi_{C_1,C_2}(x+x_n)+ r_n(x)
\end{equation*}
for some sequence $\{x_n\}$ of real numbers, where $r_n \to 0$ in $H^2(\mathbb R)$; or of the form
\begin{equation*}
\phi_n(x)=\phi_{C_1}(x+x^1_{n}) + \phi_{C_2}(x+x^2_{n}) + r_n(x)
\end{equation*}
for some pair of sequences $\{x_n^1\}$ and $\{x_n^2\}$ of real numbers with $|x^1_n -x^2_n| \to \infty$, 
where again  $r_n \to 0$ in $H^2(\mathbb R)$.  In either case, this shows that the set of minimizing functions for the variational problem
consists of the set $S(C_1,C_2)$, and that $\{\phi_n\}$ converges to $S(C_1,C_2)$ in $H^2(\mathbb R)$ norm.

Part 3 of Theorem \ref{mainthm} will follow from a simpler argument:  under the given assumptions on $(a,b)$, no 1-soliton profile or
2-soliton profile $\psi$ can exist satisfying the constraints $E_2(\psi)=a$ and $E_3(\psi)=b$.  But any minimizer for the variational problem
must satisfy the associated Euler-Lagrange equation \eqref{2solitonODE}, and therefore by Theorem \ref{unique}
must be either a 1-soliton profile or a 2-soliton profile.  Hence no minimizers can exist.  

\bigskip

We conclude this section with a couple of remarks on possible extensions of Theorem  \ref{mainthm}.

The analysis given below of the behavior of minimizing sequences under the assumptions \eqref{abeq} or \eqref{abineq1}
should also be applicable in case \eqref{abineq2} holds, and suggests that in the latter case, minimizing sequences $\{\phi_n\}$
should, as $n \to \infty$, come to resemble superpositions of widely separated single-soliton profiles.   Thus, for example, if
$\displaystyle b=-\frac{ma^{5/3}}{2^{2/3}}$ we expect that if $\{\phi_n\}$ is a minimizing sequence for $J(a,b)$, then there will exist a number $C>0$ and 
 sequences $\{\gamma_{1n}\}$ and $\{\gamma_{2n}\}$ with $\displaystyle \lim_{n \to \infty} |\gamma_{1n}-\gamma_{2n}| =\infty$
such that $$\lim_{n \to \infty}\|\phi_n - (\psi_{C,\gamma_{1n}}+\psi_{C,\gamma_{2n}})\|_{H^2} = 0.$$   Similarly, if
$\displaystyle b>-\frac{ma^{5/3}}{2^{2/3}}$,  we expect that the functions in a typical minimizing sequence for $J(a,b)$ would resemble
a superposition of three or more single-soliton profiles, two of which have equal amplitudes and whose distance from each
 other increases to infinity as $n \to \infty$.   We do not pursue this topic further here, however.

Also, the method of proof of Theorem \ref{mainthm} should apply as well to the variational problems satisfied by $N$-soliton profiles
for general $N \in \mathbb N$.  One important obstacle we have encountered, however, 
 is that of proving an analogue of Theorem \ref{unique}:  i.e., of showing
that for all possible choices of the numbers $\lambda_2, \dots, \lambda_{N+1}$, the Euler-Lagrange equation 
\begin{equation}
\nabla E_{N+2}(\psi) = \lambda_2 \nabla E_2(\psi) + \dots +\lambda_{N+1} \nabla E_{N+1}(\psi)
\label{ELN}
\end{equation}
 has no solutions in $H^N$ besides $N$-soliton profiles.  As noted in \cite{Al}, the explicit integration of equation \eqref{ELN} can be carried out for general $N$
just as it can for $N=2$, whenever the value of $(\lambda_2,\dots,\lambda_{N+1})$ corresponds to that of an $N$-soliton profile; 
 but technical difficulties arise in proving that solutions corresponding to other values of $(\lambda_2,\dots,\lambda_{N+1})$ are singular.\footnote{As noted above in the introduction, a generalization of Theorem \ref{mainthm} for $N$-soliton profiles for general $N \in \mathbb N$ has recently been accomplished by Laurens in \cite{Lau}.}

\section{Profile decomposition} \label{sec:reviewKdV}

The proof of Theorem \ref{mainthm} uses an elaboration of the method of concentration compactness, known as
 {\it profile decomposition}, which details the ways in which a sequence of measures of bounded total mass can lose compactness.
The technique dates back at least to  \cite{G} and \cite{S}, and in some form even earlier (see for
example \cite{BC} and \cite{St}).  Many generalizations, elaborations, and applications of profile decomposition
to different settings have since appeared: for a modern account, see the monograph \cite{T}.  Here we use a version 
which is due to Mari\c s \cite{M}, valid for arbitrary bounded sequences of Borel measures on any metric space;
mainly because its general setting seemed easily adaptable to our specific context. (Actually, for simplicity 
of notation we here restate Mari\c s' results in slightly less general terms than in \cite{M}, restricting here to consideration
of bounded sequences of nonnegative functions in $L^1(\mathbb R)$.)  It is likely, however, that using a version already
tailored to Sobolev spaces (such as is used in \cite{Lau}) would shorten the proof below.

\begin{defn}  We say that a sequence $\{f_n\}$ of nonnegative functions in $L^1(\mathbb R)$ is {\rm vanishing} if for every $r>0$, we have
$$
\lim_{n \to \infty} \sup_{y\in \mathbb R} \int_{B(y,r)} f_n = 0.
$$
\label{defvanish}
\end{defn}

\begin{defn} If $\{f_n\}$ is a sequence of nonnegative functions in $L^1(\mathbb R)$ and $\{x_n\}$ is a sequence of real numbers, we say
 that $\{f_n\}$ {\rm concentrates around} $\{x_n\}$ if for every $\epsilon > 0$, there exists $r_\epsilon > 0$ such that
$$
\int_{\mathbb R \backslash B(x_n, r_\epsilon)} f_n < \epsilon
$$
for every $n \in \mathbb N$.
\label{defconcentrate}
\end{defn}

\begin{thm}[\cite{M}]  Suppose $\{\rho_n\}$ is a  sequence of nonnegative functions which is bounded in $L^1(\mathbb R)$.  
Then either $\{\rho_n\}$ is vanishing, or there exists a subsequence of $\left\{\rho_n\right\}$ (which we continue to denote by 
$\left\{\rho_n\right\}$), which satisfies one of the following two properties:  either

\begin{enumerate}[(1)]
\item there exist $k \in \mathbb N$ and for each $i \in \{1,\dots, k\}$  a number $m_i > 0$ and sequence of balls 
$\{B(x^i_n,r^i_n)\}_{n \in \mathbb N}$ in $\mathbb R$  with $\displaystyle \lim_{n \to \infty} r^i_n =\infty$,  such that

\begin{enumerate}
\item $B(x^i_n,r^i_n) \cap B(x^j_n,r^j_n) = \emptyset$ for all $n \in \mathbb N$ and all $i, j \in \{1,\dots,k\}$ with $i \ne j$,

\item for each $i \in \{1,\dots, k\}$, $\displaystyle \lim_{n \to \infty} \int_{B(x^i_n,r^i_n/2)}\rho_n = m_i$,

\item  for each $i \in \{1,\dots, k\}$, $\displaystyle \lim_{n \to \infty}\int_{B(x^i_n,r^i_n)\backslash B(x^i_n,r^i_n/2)}\rho_n = 0$,

\item for each $i \in \{1,\dots,k\}$, the sequence $\displaystyle \{\rho_n \chi_{B(x^i_n, r^i_n)}\}_{n \in \mathbb N}$ concentrates around $\{x^i_n\}_{n \in \mathbb N}$, 

\item the sequence $\displaystyle\{\rho_n \chi_{\mathbb R\backslash \cup_{i=1}^k B(x^i_n,r^i_n)}\}_{n \in \mathbb N}$ is vanishing;
\end{enumerate}

or

\item for each $i \in \mathbb N$ there is a number $m_i > 0$ and a sequence of balls $\displaystyle \{B(x^i_n,r^i_n)\}_{n = i, i+1, i+2,\dots}$ in $\mathbb R$, with $\displaystyle \lim_{n \to \infty} r^i_n =\infty$,  such that
\begin{enumerate}

\item $B(x^i_n,r^i_n) \cap B(x^j_n,r^j_n) = \emptyset$ for all $i, j \in \mathbb N$ with $i \ne j$, and all $n \in \mathbb N$ with $n \ge i$ and $n \ge j$,

\item for each $i \in \mathbb N$, $\displaystyle \lim_{n \to \infty} \int_{B(x^i_n,r^i_n/2)}\rho_n = m_i$,

\item for each $i \in \mathbb N$, $\displaystyle \sum_{n=i}^\infty \int_{B(x^i_n,r^i_n)\backslash B(x^i_n,r^i_n/2)}\rho_n  \le \frac{1}{2^i}$,

\item  for each $i \in \mathbb N$, the sequence $\displaystyle \{\rho_n \chi_{B(x^i_n,r^i_n)}\}_{n \ge i}$ concentrates around $\{x^i_n\}_{n \ge i}$,

\item  the sequence $\displaystyle \{\rho_n \chi_{\mathbb R\backslash \cup_{i=1}^n B(x^i_n,r^i_n)}\}_{n \in \mathbb N}$ is vanishing, and

\item if for each $N \in \mathbb N$ and each $n \ge N$, we define $\displaystyle g^N_n=\rho_n\chi_{\mathbb R \backslash \cup_{i=1}^NB(x^i_n,r^i_n)}$, and define the increasing
function $q^N_n(r)$ for $r>0$ by
$$
q^N_n(r) = \sup_{y \in \mathbb R}\int_{B(y,r)} g^N_n,
$$
then
\begin{equation}
\lim_{N \to \infty}\left(\lim_{r \to \infty}\left(\limsup_{n \to \infty} q^N_n(r)\right)\right) = 0.
\label{closertovanishing}
 \end{equation}
\end{enumerate}
\end{enumerate}
\label{profdecomp}
\end{thm}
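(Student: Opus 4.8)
The plan is to prove Theorem \ref{profdecomp} by the classical concentration-compactness machinery, organized around the L\'evy concentration function and an iterated extraction of ``bumps.'' Throughout, let $M=\sup_n \|\rho_n\|_{L^1(\mathbb R)}<\infty$, and for each $n$ define the concentration function
$$
Q_n(r)=\sup_{y\in\mathbb R}\int_{B(y,r)}\rho_n,
$$
which is nondecreasing in $r$ and bounded above by $M$. By a diagonal argument over rational radii, I can pass to a subsequence along which $Q_n(r)$ converges for every $r>0$ to a nondecreasing limit $q(r)\le M$; set $\alpha=\lim_{r\to\infty}q(r)=\sup_{r>0}q(r)$. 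If $\alpha=0$, then $\lim_n Q_n(r)=0$ for every $r$, which is exactly the vanishing alternative of Definition \ref{defvanish}. So I may assume $\alpha>0$ and aim to produce the structured decomposition.

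The core of the argument is a single-bump extraction lemma: whenever $\alpha>0$, there exist centers $\{x_n\}$ and radii $\{r_n\}$ with $r_n\to\infty$ such that the mass of $\rho_n$ in $B(x_n,r_n/2)$ converges to $\alpha$ while the mass in the buffer annulus $B(x_n,r_n)\setminus B(x_n,r_n/2)$ converges to $0$; the ``concentrates around'' property of Definition \ref{defconcentrate} for the extracted piece then follows from the vanishing annular mass together with the capture of the full lump $\alpha$. To produce the buffer annulus I would exploit the bound on total mass: summing the annular masses over a dyadic family of radii centered at $x_n$ telescopes to at most $M$, so among any sufficiently long run of dyadic scales there must be one at which the annular mass is small; I then take $r_n$ to be such a scale, chosen large enough that $B(x_n,r_n/2)$ already captures nearly all of $\alpha$ but not so large as to swallow escaping mass. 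The centers $x_n$ are chosen to nearly realize the supremum defining $Q_n$ at a slowly growing radius. This is the step I expect to be the main obstacle: the radius selection must simultaneously force $r_n\to\infty$, drive the half-ball mass to $\alpha$, and drive the annular mass to $0$, all uniformly along the subsequence, and it is precisely here that the delicate interplay of the three nested limits in condition \eqref{closertovanishing} originates.

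With the extraction lemma in hand, I would iterate. Having extracted bumps inside disjoint balls $B(x_n^1,r_n^1),\dots,B(x_n^k,r_n^k)$, I apply the dichotomy to the remainder $\rho_n\,\chi_{\mathbb R\setminus\cup_{i=1}^k B(x_n^i,r_n^i)}$. Since the previously extracted balls have been deleted, any new bump lies in the complement and its ball is automatically disjoint from the earlier ones; disjointness together with nonnegativity forces $\sum_i m_i\le M$, so the extracted masses are summable. If at some finite stage the remainder becomes vanishing, the process terminates and yields alternative (1), with properties (a)--(e) following directly from the extraction lemma and the disjointness of the deleted balls.

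Otherwise the process never terminates, producing infinitely many bumps and yielding alternative (2). Here I extract profile $i$ only on a sub-subsequence of the subsequence used for profile $i-1$, and a final diagonalization produces one subsequence along which the ball $B(x_n^i,r_n^i)$ is defined and disjoint from the others for all $n\ge i$, matching the indexing of the theorem. Summability of the $m_i$ guarantees that the concentration-at-infinity of the $N$-th remainder tends to $0$ as $N\to\infty$, which is exactly the content of \eqref{closertovanishing}. Finally, the summable annular bound $\sum_{n=i}^\infty \int_{B(x_n^i,r_n^i)\setminus B(x_n^i,r_n^i/2)}\rho_n\le 2^{-i}$ in (c) is obtained by sharpening the extraction lemma so that its annular masses are summable in $n$, which costs only a further refinement of the dyadic radius selection and one more passage to a subsequence.
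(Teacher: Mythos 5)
The paper itself gives no proof of Theorem \ref{profdecomp}: it is quoted, in slightly specialized form, directly from Mari\c s \cite{M}, so there is no in-paper argument to compare yours against, and your proposal must stand on its own. Its architecture (L\'evy concentration function, greedy extraction of maximal bumps, iteration on the remainder, diagonalization) is indeed the standard route to such results, and several steps you sketch are sound — in particular, deducing \eqref{closertovanishing} from disjointness plus summability of the greedily chosen masses $m_i$, and getting the summable annular bound in (2)(c) by one more passage to a subsequence. But two of your key claims are false as stated. First, in the single-bump extraction you choose centers ``nearly realizing the supremum defining $Q_n$ at a slowly growing radius'' and assert that the concentration property of Definition \ref{defconcentrate} follows from half-ball mass $\to\alpha$ plus vanishing annular mass. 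Neither claim is correct: take $\rho_n=(1-1/\log n)\chi_{[0,1]}+s_n^{-1}\chi_{[n,n+s_n]}$ with $s_n\to\infty$. Here $\alpha=1$, but at scale $s_n$ the supremum is attained on the second, thinly spread lump; selecting those centers and radii $r_n=4s_n$ gives half-ball mass $\to\alpha$ and annular mass identically zero, yet the mass captured in any fixed ball $B(x_n,r_\epsilon)$ tends to $0$, so concentration fails completely. Concentration is precisely the statement that nearly all of $\alpha$ is captured at a \emph{fixed} scale; it must be built into the center selection (centers nearly maximizing $Q_n$ at fixed radii $r_\nu$ with $q(r_\nu)>\alpha-2^{-\nu}$, then diagonalized), not deduced after the fact.

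Second, the iteration: your sentence ``any new bump lies in the complement and its ball is automatically disjoint from the earlier ones'' is wrong. Deleting $B(x_n^1,r_n^1)$ forces the new bump's \emph{mass}, not its \emph{ball}, to lie outside the old ball, and the new ball must itself have radii tending to infinity. Concretely, for $\rho_n=\chi_{[0,1]}+\chi_{[\log n,\,\log n+1]}$ your extraction lemma may legitimately output $x_n^1=1/2$, $r_n^1=\log n-1$ (half-ball mass $\to 1=\alpha$, annular mass $\equiv 0$, $r_n^1\to\infty$); the second bump must then be centered within bounded distance of $\log n+1/2$, so $|x_n^1-x_n^2|-r_n^1$ stays bounded and \emph{no} choice of $r_n^2\to\infty$ yields disjoint balls. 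The missing idea is a retroactive adjustment: use the concentration property of the already-extracted pieces to show that the centers separate, $|x_n^i-x_n^j|\to\infty$ for $i\ne j$, and then shrink all radii, e.g.\ to $\min\bigl(r_n^i,\ \frac12\min_{j\ne i}|x_n^i-x_n^j|\bigr)$; concentration guarantees the shrunken balls still capture the masses $m_i$, still have vanishing annuli, and still have radii tending to infinity. A smaller slip of the same kind occurs at the very start: when $\alpha=0$ you conclude that your chosen \emph{subsequence} vanishes, whereas the theorem's first alternative concerns the full sequence; one should instead assume the full sequence is not vanishing, extract a subsequence along which $Q_n(r_0)\ge\epsilon_0>0$, and only then run the argument, so that $\alpha>0$ is forced.
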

 
One can view \eqref{closertovanishing} as saying that although, for any given value of $N$, 
the sequence $\{g^N_n\}_{n \in \mathbb N}$ is not necessarily vanishing,
 it does come closer, in some sense, to being a vanishing sequence as $N \to \infty$.

To shorten our proof of Theorem \ref{mainthm}, we observe that the two cases in Theorem \ref{profdecomp} can be combined into one, if we drop the requirement that $m_i > 0$ for each $i$:

\begin{cor}
 Suppose $\{\rho_n\}$ is a  sequence of nonnegative functions which is bounded in $L^1(\mathbb R)$, and suppose $\{\rho_n\}$ is not vanishing.  
Then there exists a subsequence of $\left\{\rho_n\right\}$ (which we continue to denote by $\left\{\rho_n\right\}$),
 a sequence $\{m_i\}_{i \in \mathbb N}$ of nonnegative numbers,  
and for each $i \in \mathbb N$ a sequence of balls $\displaystyle \{B(x^i_n,r^i_n)\}_{n \in \mathbb N}$ in $\mathbb R$, 
with $\displaystyle \lim_{n \to \infty} r^i_n =\infty$,  such that
\begin{enumerate}[(a)]

\item $B(x^i_n,r^i_n) \cap B(x^j_n,r^j_n) = \emptyset$ for all $i, j \in \mathbb N$ with $i \ne j$, and all $n \in \mathbb N$,

\item for each $i \in \mathbb N$, $\displaystyle \lim_{n \to \infty} \int_{B(x^i_n,r^i_n/2)}\rho_n = m_i$,

\item for each $i \in \mathbb N$, $\displaystyle \sum_{n=i}^\infty \int_{B(x^i_n,r^i_n)\backslash B(x^i_n,r^i_n/2)}\rho_n  \le \frac{1}{2^i}$,

\item  for each $i \in \mathbb N$, the sequence $\displaystyle \{\rho_n \chi_{B(x^i_n,r^i_n)}\}_{n \in \mathbb N}$ concentrates around $\{x^i_n\}_{n \in \mathbb N}$,

\item  the sequence $\displaystyle \{\rho_n \chi_{\mathbb R\backslash \cup_{i=1}^n B(x^i_n,r^i_n)}\}_{n \in \mathbb N}$ is vanishing, and

\item if for each $N \in \mathbb N$ and each $n \in \mathbb N$, we define $\displaystyle g^N_n=\rho_n\chi_{\mathbb R \backslash \cup_{i=1}^NB(x^i_n,r^i_n)}$,
 and define the increasing function $q^N_n(r)$ for $r>0$ by
$$
q^N_n(r) = \sup_{y \in \mathbb R}\int_{B(y,r)} g^N_n,
$$
then
\begin{equation*}
\lim_{N \to \infty}\left(\lim_{r \to \infty}\left(\limsup_{n \to \infty} q^N_n(r)\right)\right) = 0.
\end{equation*} 
\end{enumerate} 
\label{combinecases}
\end{cor}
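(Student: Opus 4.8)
The plan is to derive the Corollary from Theorem~\ref{profdecomp} by handling its two alternatives separately, in each case producing profiles indexed by all of $\mathbb N$ with balls defined for every $n$, at the price of allowing some masses $m_i$ to equal $0$. Since $\{\rho_n\}$ is non-vanishing, Theorem~\ref{profdecomp} furnishes a subsequence, which I relabel as $\{\rho_n\}$, satisfying either the finite alternative~(1) or the infinite alternative~(2).

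Suppose first that alternative~(2) holds. Conditions (b)--(f) of the Corollary are then already present verbatim among the conclusions of~(2) --- indeed with strictly positive masses --- and the sole discrepancy is that $B(x^i_n,r^i_n)$ is supplied only for $n\ge i$. For each fixed $n$ only the finitely many balls with $i\le n$ are given, so I place the remaining balls $\{B(x^i_n,r^i_n):i>n\}$ as pairwise disjoint balls of radius $1$ marching off to $+\infty$ and disjoint from the given ones. For each fixed index $i$ this alters only the finitely many terms with $n<i$, so $\lim_{n\to\infty}r^i_n=\infty$ is preserved; conditions (b), (c), (e), (f) are untouched, as they involve only limits as $n\to\infty$ or sums starting at $n=i$; disjointness~(a) holds by construction; and concentration~(d) survives because enlarging $r_\epsilon$ in Definition~\ref{defconcentrate} absorbs the finitely many added terms.

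Suppose next that alternative~(1) holds, with $k$ profiles. Here I first pass to a further subsequence to strengthen the annulus bound from a limit into a summable estimate: condition~(1c) gives only $\lim_{n\to\infty}a^i_n=0$ for $i\le k$, where $a^i_n=\int_{B(x^i_n,r^i_n)\setminus B(x^i_n,r^i_n/2)}\rho_n$, so extracting a subsequence along which $\sum_{i=1}^k a^i_n$ decays geometrically yields, after relabeling, the bound $\sum_{n\ge i}a^i_n\le 2^{-i}$ of condition~(c) for all $i\le k$ at once. I then append ``dummy'' profiles: for $i>k$ I set $m_i=0$ and, for each $n$, place the balls $\{B(x^i_n,r^i_n):i>k\}$ pairwise disjointly inside a region $(R_n,\infty)$ chosen beyond the bounded set $\cup_{i=1}^k B(x^i_n,r^i_n)$ and far enough out that $\int_{|x|>R_n}\rho_n$ is geometrically small, with radii $r^i_n=n$. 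Then~(a) and $r^i_n\to\infty$ hold by construction; (b) and~(c) for $i>k$ follow since $\int_{B(x^i_n,r^i_n)}\rho_n\le\int_{|x|>R_n}\rho_n\to 0$ geometrically; and~(d) holds because each $\rho_n\chi_{B(x^i_n,r^i_n)}$ is supported in its own ball and has total mass tending to $0$. The remainder condition~(e) is inherited, since deleting the extra balls only shrinks the leftover region and so dominates the new remainder by the vanishing sequence of~(1e); and~(f) comes for free, because for every $N\ge k$ the function $g^N_n$ is pointwise bounded by $g^k_n=\rho_n\chi_{\mathbb R\setminus\cup_{i=1}^k B(x^i_n,r^i_n)}$, which is vanishing by~(1e), whence $\lim_{r\to\infty}\limsup_{n\to\infty}q^N_n(r)=0$ already for each such $N$.

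The step demanding the most care is alternative~(1): the subsequence extraction for summability and the insertion of the dummy profiles must be carried out coherently, and one must check that the newly placed balls collide neither with the genuine profiles nor with one another, and that they leave conditions~(e) and~(f) intact. Each verification is routine --- resting only on the $L^1$-integrability of each $\rho_n$ and on the fact that domination by a vanishing sequence forces vanishing --- but keeping the bookkeeping of centers, radii, and the relabeled subsequence consistent is where the diligence lies.
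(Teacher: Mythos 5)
Your proof is correct and follows essentially the same route as the paper's: alternative (2) is reduced to the Corollary by appending harmless balls for the finitely many missing indices $n<i$, and alternative (1) is handled by a subsequence extraction making the annulus masses summable, followed by insertion of zero-mass dummy profiles with radii $r^i_n=n$, with conditions (e) and (f) inherited by pointwise domination by the vanishing remainder $\rho_n\chi_{\mathbb R\setminus\cup_{i=1}^k B(x^i_n,r^i_n)}$. The only difference is cosmetic: you guarantee that the dummy balls carry negligible mass by placing them in the far tail of each $\rho_n$ (using its $L^1$-integrability), whereas the paper extracts a further subsequence so that $\sup_{y}\int_{B(y,n)}\rho_n\chi_{\mathbb R\setminus\cup_{i=1}^k B(x^i_n,r^i_n)}\le 2^{-(n+1)}$ and then places the dummy balls anywhere disjoint from the existing ones; both devices yield the same geometric bounds needed for (b), (c), and (d).
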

 
\begin{proof}To obtain Corollary \ref{combinecases} from Theorem \ref{profdecomp}, we observe
 that if (2) holds in Theorem \ref{profdecomp}, then 
the statements in Corollary \ref{combinecases} will also hold if we simply define 
$B(x_n^i,r_n^i) = \emptyset$ when $i < n$.  So we need only consider the case when  
(1) holds in Theorem \ref{profdecomp}.

Define $E_n = \mathbb R \backslash \cup_{i=1}^k B(x_n^i, r_n^i)$ for $n \in \mathbb N$.  
Since $\left\{\rho_n \chi_{E_n}\right\}_{n \in \mathbb N}$ is vanishing, then for each fixed $j \in \mathbb N$,
$$
\lim_{n \to \infty} \sup_{y \in \mathbb R}\int_{B(y,j)} \rho_n \chi_{E_n}=0.
$$
Therefore we can define a sequence $n_1 < n_2 < n_3 < \dots$ such that
$$
\sup_{y \in \mathbb R} \int_{B(y,j)} \rho_{n_j} \chi_{E_{n_j}} \le \frac{1}{2^{j+1}}
$$
for all $j \in \mathbb N$.  If we now pass to the subsequence $\{\rho_{n_j}\}_{j \in \mathbb N}$,
 continuing to denote this subsequence by $\{\rho_n\}_{n \in \mathbb N}$, 
we have that
\begin{equation}
\sup_{y \in \mathbb R} \int_{B(y,n)} \rho_{n} \chi_{E_n} \le \frac{1}{2^{n+1}}
\label{case1tocase2}
\end{equation}
for all $n \in \mathbb N$.  Also, because of part (1)(c) of Theorem \ref{profdecomp}, by passing to a further subsequence we can guarantee
that
\begin{equation}
\int_{B(x^i_n,r^i_n)\backslash B(x^i_n,r^i_n/2)}\rho_n \le \frac{1}{2^{n+1}}
\label{case1tocase2iltk}
\end{equation}
holds for all $i \in \{1,2,\dots,k\}$ and all $n \in \mathbb N$ as well.

For each $i \ge k+1$, we set $m_i = 0$, and for all $n \in \mathbb N$ we define   $r_n^i = n$ if $n \ge i$ and $r_n^i = 0$ if $n < i$.  
For each fixed $n \in \mathbb N$, we define a sequence $\{x_n^j\}$ inductively for all $i \ge k+1$
 by choosing $x_n^i$ to be any real number such that
$B(x_n^i,r_n^i)$ is disjoint from $\cup_{j=1}^{i-1} B(x_n^j,r_n^j)$.  Then we have that $\displaystyle \lim_{n \to \infty} r_n^i = \infty$ for each
$i \in \mathbb N$, and part (a) of the Corollary holds.  

For all $i \ge k+1$ and for all $n \in \mathbb N$, since $B(x_n^i,r_n^i) \subset E_n$, it follows from \eqref{case1tocase2} that
\begin{equation}
\int_{B(x_n^i,r_n^i)} \rho_n \le \frac{1}{2^{n+1}}.
\label{igtkballtozero}
\end{equation}
This implies that parts (b) and (c) of the Corollary hold for all $i \ge k+1$.  For $1 \le i \le k$ we already know that part (b) holds, and part (c)
follows from \eqref{case1tocase2iltk}.

To prove part (d), we fix $i$ such that $i \ge k+1$, and observe that by \eqref{igtkballtozero},
for every $\epsilon > 0$ we can find $N \in \mathbb N$ such that $\displaystyle \intR \rho_n \chi_{B(x_n^i,r_n^i)} < \epsilon$ for all $n > N$.
Also, for each $n \in \{1, \dots, N\}$, since $\rho_n \chi_{B(x_n^i,r_n^i)} \in L^1(\mathbb R)$, we can find $r_{\epsilon, n} >0$ such that
$$
  \int_{\mathbb R\backslash B(x_n^i,r_{\epsilon, n})} \rho_n \chi_{B(x_n^i,r_n^i)} < \epsilon.
$$
   Then if we set $r_\epsilon = \max \{r_{\epsilon,1},\dots, r_{\epsilon, N}\}$, we have that
$$
 \int_{\mathbb R\backslash B(x_n^i,r_\epsilon)} \rho_n \chi_{B(x_n^i,r_n^i)} < \epsilon
$$
for all $n \in \mathbb N$.  This proves part (d) for all $i$ such that $i \ge k+1$, and we already know that part (d) holds for $1 \le i \le k$.

Finally, part (e) of the Corollary follows immediately from part (1)(e) of Theorem \ref{profdecomp}, as does part (f) of the Corollary; since
(1)(e) of Theorem \ref{profdecomp} implies that $\displaystyle \lim_{n \to \infty} q_n^N(r)=0$ for every $r > 0$ and every $N \ge k$. 
\end{proof}

We record the following important feature of vanishing sequences.

\begin{lem}  Suppose $2 < p \le \infty$.   Then there exists a constant $C_p > 0$ such that for all $u \in H^1(\mathbb R)$,
\begin{equation}
\|u\|_{L^p(\mathbb R)} \le C_p \left(\sup_{y \in \mathbb R} \int_{B(y,1)}|u'|^2 + |u|^2 \ dx \right)^{\frac12-\frac{1}{p}} \|u\|_{H^1(\mathbb R)}^{\frac{2}{p}}
\end{equation}
\label{lpvanish}
\end{lem}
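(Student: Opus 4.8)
The plan is to first prove the estimate in the endpoint case $p = \infty$, and then obtain the remaining cases $2 < p < \infty$ by interpolating between $L^\infty$ and $L^2$. Throughout, I would write
\[
M = \sup_{y \in \mathbb R} \int_{B(y,1)} \bigl(|u'|^2 + |u|^2\bigr)\,dx,
\]
so that the claimed inequality reads $\|u\|_{L^p(\mathbb R)} \le C_p\, M^{\frac12 - \frac1p}\|u\|_{H^1(\mathbb R)}^{2/p}$, and I may assume $u$ is the continuous representative, since $H^1(\mathbb R)$ embeds in the continuous functions.

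\emph{Step 1 (local $L^\infty$ bound).} First I would establish that $\|u\|_{L^\infty(\mathbb R)}^2 \le \tfrac32 M$. Fix $x \in \mathbb R$ and run the argument on the interval $I = B(x,1)$, which has length $2$. For any $y \in I$, the fundamental theorem of calculus together with the pointwise bound $2|u u'| \le |u|^2 + |u'|^2$ gives
\[
u(x)^2 \le u(y)^2 + \int_I \bigl(|u'|^2 + |u|^2\bigr)\,dt.
\]
Averaging this inequality over $y \in I$ (whose length is $2$) and then estimating $\tfrac12\int_I |u|^2 \le \tfrac12\int_I(|u'|^2+|u|^2)$ yields
\[
u(x)^2 \le \tfrac32 \int_{B(x,1)}\bigl(|u'|^2 + |u|^2\bigr)\,dt \le \tfrac32 M.
\]
Taking the supremum over $x \in \mathbb R$ proves the claim. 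This already settles the case $p = \infty$, for which $\frac12 - \frac1p = \frac12$ and $\frac2p = 0$, so that the right-hand side is $C_\infty M^{1/2}$.

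\emph{Step 2 (interpolation).} For $2 < p < \infty$ I would bound
\[
\|u\|_{L^p(\mathbb R)}^p = \int_{\mathbb R} |u|^{p-2}\,|u|^2\,dx \le \|u\|_{L^\infty(\mathbb R)}^{\,p-2}\,\|u\|_{L^2(\mathbb R)}^2 .
\]
Inserting the bound $\|u\|_{L^\infty}^2 \le \tfrac32 M$ from Step 1 and the trivial estimate $\|u\|_{L^2} \le \|u\|_{H^1}$ gives $\|u\|_{L^p}^p \le (\tfrac32 M)^{(p-2)/2}\,\|u\|_{H^1}^2$. Taking $p$-th roots and using $\frac{p-2}{2p} = \frac12 - \frac1p$ produces
\[
\|u\|_{L^p(\mathbb R)} \le \bigl(\tfrac32\bigr)^{\frac12 - \frac1p}\, M^{\frac12 - \frac1p}\,\|u\|_{H^1(\mathbb R)}^{2/p},
\]
which is exactly the desired inequality with $C_p = (3/2)^{\frac12-\frac1p}$.

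There is no serious obstacle in this argument; the only point requiring care is that the Sobolev-type constant in Step 1 be independent of the location of the interval. This is precisely why it is convenient to run the estimate on the fixed-length balls $B(x,1)$ that appear in the definition of $M$ and to average over $y$ rather than to invoke an abstract embedding with an unspecified constant. The averaging step is what converts the supremum defining $M$ into a genuine pointwise bound on $u$, and it is the heart of the proof; everything else is bookkeeping of exponents.
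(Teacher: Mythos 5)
Your proof is correct, and it is worth noting that the paper itself does not prove this lemma at all: it simply declares it standard and cites Mari\c s \cite{M} for a proof. So your argument is genuinely self-contained where the paper defers to the literature. What you do is the classical route in its cleanest one-dimensional form: the fundamental-theorem-of-calculus-plus-averaging trick in Step 1 is exactly the proof of the local embedding $H^1(I)\hookrightarrow L^\infty(I)$ with a constant independent of the location of the unit interval, and taking the supremum over $x$ converts the locally uniform quantity $M$ into a global $L^\infty$ bound; Step 2 then interpolates $L^p$ between $L^\infty$ and $L^2$ globally. The more common textbook presentation (e.g.\ in Lions' concentration-compactness papers) instead partitions $\mathbb R$ into unit intervals, applies the local embedding and interpolation on each interval separately, and sums; your version avoids the summation by extracting the $L^\infty$ bound first, which is slightly tidier and also delivers the endpoint $p=\infty$ and the explicit constant $C_p=(3/2)^{\frac12-\frac1p}$ at no extra cost. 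All exponent bookkeeping checks out, including the identity $\frac{p-2}{2p}=\frac12-\frac1p$ and the degeneration of the claim at $p=\infty$, and the use of the absolutely continuous representative of $u\in H^1(\mathbb R)$ legitimizes the pointwise argument.
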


\begin{proof}  This lemma is standard; a proof can be found, for example, in \cite{M}.
\end{proof}

\begin{cor}  Suppose $\{g_n\}$ is a bounded sequence in $H^1(\mathbb R)$.   If $\{|g_n|^2 + |g_n'|^2\}$ is vanishing, then $\displaystyle \lim_{n \to \infty}\|g_n\|_{L^p} = 0$ for all $p>2$.
\end{cor}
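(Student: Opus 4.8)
The plan is to deduce this directly from Lemma \ref{lpvanish}, which is tailor-made for exactly this purpose. The proof should be a short, two-ingredient argument: the interpolation-type bound of Lemma \ref{lpvanish} controls $\|g_n\|_{L^p}$ by a product of a ``local mass'' factor and an $H^1$-norm factor, and the hypotheses ensure that the first factor vanishes while the second stays bounded.

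Concretely, fix $p$ with $2 < p \le \infty$ and apply Lemma \ref{lpvanish} with $u = g_n$. This yields
$$
\|g_n\|_{L^p(\mathbb R)} \le C_p \left(\sup_{y \in \mathbb R} \int_{B(y,1)} |g_n'|^2 + |g_n|^2 \ dx \right)^{\frac12 - \frac1p} \|g_n\|_{H^1(\mathbb R)}^{\frac2p}
$$
for every $n$. Now I would invoke the two hypotheses in turn. Since $\{g_n\}$ is bounded in $H^1(\mathbb R)$, there is a constant $M$ with $\|g_n\|_{H^1(\mathbb R)} \le M$, so the final factor is bounded by $M^{2/p}$ uniformly in $n$. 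For the first factor, I would specialize the definition of vanishing (Definition \ref{defvanish}) for the sequence $\{|g_n|^2 + |g_n'|^2\}$ to the choice $r = 1$, which gives
$$
\lim_{n \to \infty} \sup_{y \in \mathbb R} \int_{B(y,1)} |g_n'|^2 + |g_n|^2 \ dx = 0.
$$

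The final step is simply to observe that the exponent $\frac12 - \frac1p$ is strictly positive precisely because $p > 2$; hence raising the vanishing quantity to this positive power still produces a sequence tending to $0$. Combining the two estimates, the right-hand side is bounded by $C_p M^{2/p}$ times a quantity going to $0$, so $\|g_n\|_{L^p(\mathbb R)} \to 0$ as $n \to \infty$, as desired. I do not anticipate any genuine obstacle here: all the analytic work is already packaged in Lemma \ref{lpvanish}, and the only points requiring care are the bookkeeping ones, namely that the exponent is positive exactly in the regime $p > 2$ and that the $H^1$ factor must be controlled by the boundedness assumption rather than by the vanishing assumption.
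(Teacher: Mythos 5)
Your proof is correct and is exactly the argument the paper intends: the corollary is stated without proof precisely because it follows immediately from Lemma \ref{lpvanish} in the way you describe, using the vanishing hypothesis (with $r=1$) for the local-mass factor and $H^1$-boundedness for the other factor, with the exponent $\frac12-\frac1p>0$ since $p>2$. No gaps.
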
  
 
 In the remainder of this section, we prove some lemmas regarding the behavior of the profile decomposition for bounded sequences in $H^2(\mathbb R)$.  
 
 Suppose $\{\phi_n\}$ is an arbitrary bounded sequence in $H^2(\mathbb R)$, and let $B<\infty$ be such that
 \begin{equation}
 \|\phi_n\|_{H^2} \le B 
\label{defB}
 \end{equation}
 for all $n \in \mathbb N$.  Define  
\begin{equation}
\rho_n :=\phi_n^2 + (\phi_n')^2 + (\phi_n'')^2.
\label{defrhon}
\end{equation}
  Since $\{\phi_n\}$ is bounded in $H^2$, then $\{\rho_n\}$ is bounded in $L^1$, and we can apply Corollary \ref{combinecases} to $\{\rho_n\}$. 
  
  For the remainder of this section we assume:
  
  \begin{equation}
  \text{$\{\rho_n\}$ is not a vanishing sequence.}
  \label{assumenovanish}
  \end{equation}
 
 Since $\{\rho_n\}$ does not vanish, then from Corollary \ref{combinecases}  we obtain
a sequence of balls $\{B(x_n^i,r_n^i)\}_{n \in \mathbb N}$ for each $i \in \mathbb N$, satisfying properties (a) to (f).

 Define $\eta$ to be a smooth function on $\mathbb R$ such that $\eta(x)=1$ 
for $|x| \le 1/2$ and $\eta(x)=0$ for $|x| \ge 1$; and for $R > 0$, define $\eta_R(x)=\eta(x/R)$.  
For each $i \in \mathbb N$ and $n \in \mathbb N$, define
\begin{equation}
\label{defvni}
v^i_n(x)=\phi_n(x)\eta_{r^i_n}(x-x^i_n).
\end{equation} 
We then have the decomposition
 \begin{equation}
\phi_n = \sum_{i=1}^n v^i_n+w_n,
\label{phidecomp}
\end{equation}
where for each $n \in \mathbb N$ we define 
\begin{equation}
\label{defwn}
w_n(x) = \phi_n(x)\widetilde \eta_n(x), 
\end{equation} 
with
$$
\widetilde \eta_n(x)= 1 - \sum_{i=1}^n \eta_{r^i_n}(x - x^i_n).
$$

 For all $n \in \mathbb N$ and $i \in \mathbb N$, define
 \begin{equation}
 \begin{aligned}
A^i_n &= B(x^i_n,r^i_n)\backslash B(x^i_n,r^i_n/2)\\
Z^i_n &= \mathbb R \backslash B(x^i_n,r^i_n/2)\\ 
 W_n &= \mathbb R \backslash \cup_{i=1}^n B(x_n^i,r_n^i).
\end{aligned}
\label{defAin}
\end{equation} 
 Then 
\begin{equation}
\begin{aligned}
\supp v^i_n &\subseteq B(x^i_n,r^i_n)\\
\supp w_n &\subseteq W_n \cup\left( \cup_{i=1}^n A^i_n\right)\\
\supp w_n' &\subseteq \cup_{i=1}^n A^i_n.
\end{aligned} 
\label{supports}
\end{equation}   
  
\begin{lem} Let $\{\phi_n\}$ be an arbitrary bounded sequence of functions in $H^2(\mathbb R)$, and let $B$, $\eta$, $\rho_n$, $v_n^i$, $w_n$, and $A^i_n$ be as defined above, under the assumption that \eqref{assumenovanish} holds.  Then there exists a constant $C>0$ 
depending only on $\eta$ and $B$ (and in particular, not on $m$ or $n$) such that for all $n \in \mathbb N$, and for $m=2,3,4$, 
\begin{equation} 
\left|E_m(\phi_n) - \sum_{i=1}^n E_m(v^i_n)-E_m(w_n)\right| \le C \sum_{i=1}^n\int_{A^i_n} \rho_n.
 \label{hksplit2}
\end{equation} 
\label{hksplitlem2}
\end{lem}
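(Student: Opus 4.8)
The plan is to use the disjointness of the balls $B(x^i_n,r^i_n)$ to collapse the left-hand side of \eqref{hksplit2} to a sum of local errors supported only on the transition shells $A^i_n$, and then to control each local error by $\rho_n$ using the explicit form of the densities $P_m$ together with Sobolev embedding. To begin, set $I^i_n=B(x^i_n,r^i_n/2)$ and use property (a) of Corollary \ref{combinecases} to write $\mathbb R$ as the disjoint union $W_n\sqcup\bigsqcup_{i=1}^n(I^i_n\sqcup A^i_n)$. On the open set $I^i_n$ the cutoff $\eta_{r^i_n}(\cdot-x^i_n)$ is identically $1$ and every other cutoff vanishes, so $v^i_n\equiv\phi_n$ together with all its derivatives, while $w_n$ and the $v^j_n$ with $j\ne i$ vanish identically; on the interior of $W_n$ every cutoff vanishes, so $w_n\equiv\phi_n$ and every $v^i_n$ vanishes. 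Writing $E_m(u)=\int_{\mathbb R}P_m$ and splitting each of $E_m(\phi_n)$, $\sum_i E_m(v^i_n)$ and $E_m(w_n)$ over this partition, the integral over each $I^i_n$ is common to $E_m(\phi_n)$ and $E_m(v^i_n)$, and the integral over $W_n$ is common to $E_m(\phi_n)$ and $E_m(w_n)$; since moreover $P_m$ has no constant term (so $P_m(w_n)=0$ on $I^i_n$), all of these contributions cancel and only the shells survive, giving the exact identity
\begin{equation*}
E_m(\phi_n)-\sum_{i=1}^n E_m(v^i_n)-E_m(w_n)=\sum_{i=1}^n\int_{A^i_n}\bigl[P_m(\phi_n)-P_m(v^i_n)-P_m(w_n)\bigr].
\end{equation*}

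It then suffices to bound the integrand pointwise on each $A^i_n$ by $C\rho_n$. The key simplification is that on $A^i_n$ all cutoffs except the $i$-th vanish, so $\phi_n=v^i_n+w_n$ there. Expanding each monomial of $P_m$ through $\partial^a\phi_n=\partial^a v^i_n+\partial^a w_n$ and discarding the pure-$v^i_n$ and pure-$w_n$ terms, the integrand becomes a finite sum of mixed products, each a product of factors of the form $\partial^a v^i_n$ or $\partial^a w_n$ with $0\le a\le 2$, and each containing at least one factor of each type. Writing $\eta_{r^i_n}^{(j)}=(r^i_n)^{-j}\eta^{(j)}(\cdot/r^i_n)$ and using that $r^i_n\ge 1$ (which, since $r^i_n\to\infty$, we may harmlessly assume), the product rule gives the pointwise bounds $|\partial^a v^i_n|,\,|\partial^a w_n|\le C_\eta(|\phi_n|+|\phi_n'|+|\phi_n''|)\le C_\eta'\sqrt{\rho_n}$ for $a\le 2$; and for $a\le 1$ the same factors are in addition bounded in $L^\infty$ by $C\|\phi_n\|_{H^2}\le CB$, by the embedding $H^2(\mathbb R)\hookrightarrow W^{1,\infty}(\mathbb R)$.

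These two bounds close the estimate. For the quadratic monomials $u^2$, $u_x^2$ and $u_{xx}^2$, each mixed product has exactly two factors, both $\le C_\eta'\sqrt{\rho_n}$, hence is $\le C\rho_n$. The remaining monomials, namely $u^3$ in $P_3$ and $uu_x^2$, $u^4$ in $P_4$, have degree three or four but---and this is the crucial structural point---involve only $u$ and $u_x$, never $u_{xx}$; so in each of their mixed products every factor is $L^\infty$-bounded by $CB$, and I can estimate two factors by $C_\eta'\sqrt{\rho_n}$ and the remaining one or two factors by $CB$, again obtaining a pointwise bound $\le C\rho_n$. Summing over the finitely many monomials and mixed products and maximizing the constant over $m=2,3,4$ yields a $C=C(\eta,B)$ with $|P_m(\phi_n)-P_m(v^i_n)-P_m(w_n)|\le C\rho_n$ on $A^i_n$; integrating over $A^i_n$ and summing over $i$ then gives \eqref{hksplit2}. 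I expect this last step to be the main obstacle: $\rho_n$ is only quadratic whereas $P_3$ and $P_4$ contain cubic and quartic terms, so the argument genuinely depends on the fact that all higher-degree terms of the KdV densities are free of second derivatives, which is exactly what lets the $W^{1,\infty}$ control of $\phi_n$ and $\phi_n'$ absorb the excess degree.
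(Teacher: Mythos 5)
Your proof is correct and takes essentially the same route as the paper's: both reduce the discrepancy to the transition shells $A_n^i$ (you via an exact localization identity over the partition $W_n \sqcup \bigsqcup_i (B(x_n^i,r_n^i/2)\sqcup A_n^i)$, the paper by expanding $E_m\bigl(\sum_i v_n^i + w_n\bigr)$ and observing that the surviving cross terms are supported exactly there), and both then bound the mixed products pointwise by $C\rho_n$ using Sobolev $L^\infty$ control of the excess factors, which works precisely because the cubic and quartic terms of $P_3$ and $P_4$ contain no $u_{xx}$. The only differences are organizational (you state the cancellation as an identity before estimating, the paper estimates the cross-term integrals directly), and the normalization $r_n^i\ge 1$ that you flag explicitly is used equally, though tacitly, in the paper's bounds on the cutoff derivatives.
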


\begin{proof} We substitute \eqref{phidecomp} into $E_2(\phi_n)= \frac12 \intR \phi_n^2$, and expand,
  expressing $E_2(\phi_n)$ as a sum of integrals.    Since $v^i_n$ and $v^j_n$ have disjoint supports 
for $i \ne j$, all integrals in this expression whose integrands contain two factors of $v^i_n$ 
with distinct values of $i$ will vanish.  We thus obtain
$$
E_2(\phi_n) = \sum_{i=1}^n E_2(v^i_n) + E_2(w_n) + \sum_{i=1}^n \intR v^i_n w_n.
$$
 Since the intersection of the supports of $v^i_n$ and $w_n$ is contained in $A^i_n$,
 and $|v^i_n| \le |\phi_n|$ and $|w_n| \le |\phi_n|$ everywhere on $\mathbb R$, we have
\begin{equation}
\intR |v^i_n w_n| = \int_{A^i_n}|v^i_n w_n| \le \int_{A^i_n} |\phi_n|^2 \le \int_{A^i_n} \rho_n.
\label{vinwn}
\end{equation}
This then establishes \eqref{hksplit2} for $m=2$.

Substituting \eqref{phidecomp} into the expression for $E_3(\phi_n)$, we obtain the estimate
\begin{equation*}
\begin{aligned}
&\left|E_3(\phi_n)-\sum_{i=1}^n E_3(v^i_n) - E_3(w_n)\right| \le \\
&\sum_{i=1}^n\left(\intR |v^i_n{}'||w_n'| + \intR|v^i_n|^2| w_n| + \intR|v^i_n||w_n|^2\right).
\end{aligned} 
\label{e31est}
\end{equation*} 
Then we write
\begin{equation}
\begin{aligned}
\intR |v^i_n{}'|| w_n'| &= \int_{A^i_n}|v^i_n{}'|| w_n'| \le C \int_{A^i_n}\left( |\phi_n|^2 + |\phi_n'|^2\right)\le C\int_{A^i_n} \rho_n,\\
 \intR |v^i_n|^2|w_n| &\le \|v_n\|_{L^\infty}\int_{A^i_n}|v^i_n w_n| \le C\|\phi_n\|_{L^\infty}\int_{A^i_n}  \rho_n \le C \int_{A^i_n}  \rho_n,
 \end{aligned}
 \label{estimatesforE3}
 \end{equation}
and similarly for $\intR|v^i_n||w_n|^2$.  (Here and in what follows we use $C$ to stand for various constants which depend only on $\eta$ and $B$.)  
This establishes \eqref{hksplit2} for $m=3$.

Finally, to prove \eqref{hksplit2} for $m=4$, we substitute \eqref{phidecomp} into $E_4(\phi_n)$ and write
\begin{equation}
\begin{aligned}
&\left|E_4(\phi_n)-\sum_{i=1}^n E_4(v^i_n) - E_4(w_n)\right| \le \\
&C\intR\sum_{i=1}^n \left(|v^i_n{}''w_n''| + |v^i_nv^i_n{}' w_n'| + |v^i_n||w_n'|^2+|v^i_n{}'|^2|w_n|+
\sum_{\substack{s,t \ge 1\\s+t \le 4}}|v^i_n|^s|w_n|^t\right).
\end{aligned} 
\label{e41est}
\end{equation}  
All the terms on the right side of \eqref{e41est} can be estimated like the terms in the preceding paragraphs.   For example, we have
\begin{equation*}
\begin{aligned}
\intR |v^i_n|^3|w_n| &\le \|v_n\|_{L^\infty}^2\int_{A^i_n}|v^i_n w_n| \le C\|\phi_n\|_{L^\infty}^2\int_{A^i_n}  \rho_n \le C \|\phi_n\|_{H^1}^2\int_{A^i_n}  \rho_n\\
\intR |v^i_n{}'|^2|w_n| &\le \|w_n\|_{L^\infty}\int_{A^i_n}|v^i_n{}'|^2 \le C\|\phi_n\|_{L^\infty}\int_{A^i_n}  \rho_n \le C \|\phi_n\|_{H^1}\int_{A^i_n}  \rho_n.
\end{aligned}
\end{equation*} 
Clearly, similar estimates hold for the remaining integrals; we omit the details.
\end{proof}

\begin{lem}  In addition to the assumptions of Lemma \ref{hksplitlem2}, assume that  $f \in H^2$ with $\|f\|_{H^2} \le B$.
  For each  $n \in \mathbb N$ and each  $i \in \{1,\dots,n\}$, define
\begin{equation*}
\tilde \phi_n = f + \sum_{\substack{ j=1\\
                j \ne i}}^n v^j_n + w_n.
\end{equation*}  
Then  there exists a constant $C>0$, depending only on $\eta$ and $B$, such that 
 \begin{equation}
   \label{e2subst2}
 \left|E_m(\tilde\phi_n) -E_m(f)-   \sum_{\substack{ j=1\\
                j \ne i}}^n E_m(v^j_n)-E_m(w_n)\right|\le C \sum_{\substack{ j=1\\
                j \ne i}}^n \int_{A^j_n} \rho_n +C\|f\|_{H^{m-2}(Z^i_n)},
 \end{equation}  
 for $m=2$, $3$, and $4$.
\label{hksubstlem2}
\end{lem}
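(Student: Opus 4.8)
The plan is to adapt the proof of Lemma~\ref{hksplitlem2} to the modified function $\tilde\phi_n$. Set $g=\sum_{j\ne i}v^j_n+w_n$, so that $\tilde\phi_n=f+g$. The first step is the geometric observation that $\supp g\subseteq Z^i_n$. Indeed, for $j\ne i$ the ball $B(x^j_n,r^j_n)$ is disjoint from $B(x^i_n,r^i_n/2)$ by property (a), so $\supp v^j_n\subseteq Z^i_n$; and on $B(x^i_n,r^i_n/2)$ the cutoff $\eta_{r^i_n}(\cdot-x^i_n)$ equals $1$ while all other cutoffs vanish, whence $\widetilde\eta_n=0$ there and $\supp w_n\subseteq Z^i_n$ as well. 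This containment is exactly what forces the $f$-dependent error to see only $f$ restricted to $Z^i_n$.

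Next, for each $m\in\{2,3,4\}$ I would substitute $\tilde\phi_n=f+g$ into $E_m$ and use the polynomial structure of $E_m$ to write $E_m(f+g)=E_m(f)+E_m(g)+R^f_m$, where $R^f_m$ collects precisely those cross terms containing at least one factor drawn from $\{f,f',f''\}$. The remaining piece $E_m(g)$ is then split exactly as in Lemma~\ref{hksplitlem2}: because the $v^j_n$ ($j\ne i$) have pairwise disjoint supports and each surviving cross product $v^j_n w_n$ is supported in $A^j_n$, one obtains $\bigl|E_m(g)-\sum_{j\ne i}E_m(v^j_n)-E_m(w_n)\bigr|\le C\sum_{j\ne i}\int_{A^j_n}\rho_n$. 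Combining these two identities reduces the assertion \eqref{e2subst2} to the single estimate $|R^f_m|\le C\|f\|_{H^{m-2}(Z^i_n)}$.

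To bound $R^f_m$ I would use that every term of $R^f_m$ is an integral over $\supp g\subseteq Z^i_n$ whose integrand is a product of factors drawn from $\{f,f',f''\}$ and $\{g,g',g''\}$, with at least one factor of each type present. In each such term I would single out one factor of $f$ (or a derivative of $f$), of the highest derivative order appearing in that term, and estimate it in $L^2(Z^i_n)$; a second factor---typically a $g$-factor of matching order---is estimated in $L^2$, and all remaining factors are bounded in $L^\infty$ using the uniform bounds $\|f\|_{H^2}\le B$, $\|g\|_{H^2}\le CB$ and the one-dimensional Sobolev embedding $H^1\hookrightarrow L^\infty$ (applied on each of the two rays comprising $Z^i_n$). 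Keeping two factors and applying Cauchy--Schwarz guarantees genuine $L^2$ norms, so no bare $L^1(Z^i_n)$ integral over the infinite set $Z^i_n$ ever arises. The highest-order term determines the surviving norm: for $m=2$ the only cross term is $\int_{Z^i_n}fg$; for $m=3$ the leading term is $\int_{Z^i_n}f'g'$; and for $m=4$ it is $\int_{Z^i_n}f''g''$, which is exactly what requires the full $\|f\|_{H^2(Z^i_n)}$. Whenever this procedure leaves a power $\|f\|_{H^{m-2}(Z^i_n)}^k$ with $k\ge 2$ (as happens for some terms coming from $uu_x^2$ and $u^4$), the surplus factors are absorbed using $\|f\|_{H^{m-2}(Z^i_n)}\le\|f\|_{H^{m-2}}\le B$, reducing the estimate to first order. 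Hence every term of $R^f_m$ is bounded by $C\|f\|_{H^{m-2}(Z^i_n)}$, as required.

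The main obstacle is bookkeeping rather than conceptual. The essential new point beyond Lemma~\ref{hksplitlem2} is the support containment $\supp g\subseteq Z^i_n$, which localizes all $f$-cross terms to $Z^i_n$; after that, one must simply check term by term---most laboriously for $m=4$, where $E_4$ contains both the cubic-in-derivatives term $uu_x^2$ and the quartic term $u^4$---that the derivatives can always be distributed so that a single factor of $f$ carries the $H^{m-2}(Z^i_n)$ norm while every other factor is uniformly bounded. No inequality beyond those already used in Lemma~\ref{hksplitlem2} and the standard Sobolev embedding is needed.
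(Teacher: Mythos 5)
Your proposal is correct and is essentially the paper's own argument: both rest on the support containments $\supp v^j_n,\ \supp w_n \subseteq Z^i_n$ for $j \ne i$, handle the $v^j_n$--$w_n$ cross terms exactly as in Lemma \ref{hksplitlem2}, and bound each $f$-cross term by placing one $f$-factor (of the highest derivative order present) in $L^2(Z^i_n)$, one further factor in $L^2(\mathbb R)$, and all remaining factors in $L^\infty$ via Sobolev embedding, absorbing surplus powers of $f$ and the pieces of $\phi_n$ into the constant depending on $B$ and $\eta$. Your preliminary bundling of $\sum_{j\ne i} v^j_n + w_n$ into a single function $g$, followed by an appeal to the argument of Lemma \ref{hksplitlem2} for $E_m(g)$, is only an organizational variant of the paper's direct term-by-term expansion, not a genuinely different route.
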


\begin{proof} Proceeding as in the proof of Lemma \ref{hksplitlem2}, we write
\begin{equation*}
\begin{aligned}
 &\left|E_2(\tilde\phi_n) -E_2(f)-   \sum_{\substack{ j=1\\
                j \ne i}}^n E_2(v^j_n)-E_2(w_n)\right|\\
               & \le C\intR|fw_n|+C\sum_{\substack{j=1\\j \ne i}}^n
\left( \intR|fv^j_n|+\intR|v^j_nw_n| \right)  
\end{aligned}
\end{equation*}
 The  sum $\displaystyle \sum_{\substack{j=1\\j \ne i}}^n\intR |v^j_nw_n|$ is estimated as in \eqref{vinwn}. 
  Also, since the support of $w_n$ is contained in $Z^i_n$, and the same is true of the support of $v^j_n$ whenever $j \ne i$, then
$$
\intR |fw_n| \le \|f\|_{L^2(Z^i_n)}\|w_n\|_{L^2} \le C\|f\|_{L^2(Z^i_n)}\|\phi_n\|_{L^2} \le C \|f\|_{L^2(Z^i_n)}.
$$ 
Finally, since the supports of the functions $\{v^j_n\}_{j \in \mathbb N}$ are mutually disjoint,
 and for $j \ne i$ are all contained in $Z^i_n$, there exists $C$ depending only on $\eta$ and $B$ (and not on $k$, $n$ or $i$) such that
$$
\sum_{\substack{j=1\\j \ne i}}^n \intR|fv^j_n| \le C\int_{Z^i_n}|f\phi_n| \le C \|f\|_{L^2(Z^i_n)}.
$$
   This proves  \eqref{e2subst2} for $m=2$.

Similarly, \eqref{e2subst2} is proved for $m=3$ by expanding $E_3(\phi_n)$ as in Lemma \ref{hksplitlem2},
 then using estimates such as \eqref{estimatesforE3}, together with the estimates 
$$
\intR|f'|\left(\sum_{j \ne i}|v^j_n{}'|+|w_n'|\right) \le C\|f'\|_{L^2(Z^i_n)}\|\phi_n\|_{H^1}\le C\|f\|_{H^1(Z^i_n)}\|\phi_n\|_{H^1},
$$
$$
\intR|f|^2\left(\sum_{j \ne i}|v^j_n|+|w_n|\right) \le \|f\|_{L^\infty}\|f\|_{L^2(Z^i_n)}\|\phi_n\|_{L^2} \le C\|f\|_{L^2(Z^i_n)},
$$
and a similar estimate for $\intR |f|(\sum_{j \ne i}|v^j_n|^2+|w_n|^2)$.

Finally, \eqref{e2subst2} is proved for $m=4$ by expanding $E_4(\phi_n)$ to obtain an expression similar to \eqref{e41est},
 but with additional terms on the right-hand side of the form
\begin{equation*}
\begin{aligned}
&\sum_{j \ne i}\intR\left(|f''||v^j_n{}''|+ |f||v^j_n{}'|^2 +|f|^2|v^j_n{}'|+|f||f'||v^j_n{}'|\right)+\\
&+\intR\left( |f''||w_n''|  + |f||w_n'|^2+ |f|^2|w_n'| +|f||f'||w_n'|\right)\\
&+\sum_{\substack{s,t \ge 1\\s+t \le 4}}\intR |f|^s\left(\sum_{j \ne i}|v^j_n|+|w_n|\right)^t.
 \end{aligned}
\end{equation*}
  These can each be estimated by the terms on the right-hand side of \eqref{e2subst2}.   For example, we have
$$
\intR|f|^3|w_n| \le \|f\|_{L^\infty}\|w_n\|_{L^\infty}\int_{Z^i_n}|f|^2 
\le C\|f\|_{H^1}^2\|w_n\|_{H^1}\|f\|_{L^2(Z^i_n)}
$$
and
$$
\intR|f||f'||w_n'| \le \|f\|_{L^\infty}\|w_n'\|_{L^2}\left(\int_{Z^i_n}|f'|^2\right)^{1/2} 
\le C\|f\|_{H^1}\|w_n\|_{H^1}\|f\|_{H^1(Z^i_n)}.
$$
The remaining terms are estimated similarly.  We omit the details, which are straightforward.
\end{proof}

\begin{lem} Under the same assumptions as in Lemma \ref{hksplitlem2}, we have
\begin{equation}
\lim_{n \to \infty} \sum_{i=1}^n\int_{A_n^i} \rho_n = 0.
\end{equation}
\label{sumrhontozero}
\end{lem}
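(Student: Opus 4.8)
The plan is to read this off directly from property (c) of Corollary \ref{combinecases}, which is exactly the hypothesis designed to control the annular integrals $\int_{A_n^i}\rho_n$. Write $a_{n,i}=\int_{A_n^i}\rho_n$, so that $a_{n,i}\ge 0$ for every pair $(n,i)$, and (recalling that the balls $B(x_n^i,r_n^i)$ are empty, hence $a_{n,i}=0$, whenever $i>n$) property (c) asserts precisely that for each fixed $i$ the ``column sum'' obeys $\sum_{n=i}^\infty a_{n,i}\le 2^{-i}$.

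The key observation is that the quantity we want to send to zero, namely $S_n:=\sum_{i=1}^n a_{n,i}$, is just the ``row sum'' of the same doubly-indexed nonnegative array. Since all entries are nonnegative, Tonelli's theorem permits an unrestricted interchange of the order of summation, giving
\begin{equation*}
\sum_{n=1}^\infty S_n=\sum_{n=1}^\infty\sum_{i=1}^n a_{n,i}=\sum_{i=1}^\infty\sum_{n=i}^\infty a_{n,i}\le\sum_{i=1}^\infty\frac{1}{2^i}=1<\infty.
\end{equation*}
Thus the series $\sum_n S_n$ of nonnegative terms converges, and hence its general term must tend to zero: $S_n\to 0$ as $n\to\infty$. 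This is exactly the conclusion of the Lemma.

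I do not anticipate any genuine obstacle here. The entire substantive content has already been packaged into the summability estimate (c) furnished by the profile decomposition, and what remains is a soft argument combining interchange of summation for nonnegative terms with the elementary fact that a convergent series has vanishing general term. The only point needing care is bookkeeping with the index ranges --- specifically, confirming that the terms with $i>n$ contribute nothing, so that the row sum $\sum_{i=1}^n$ lines up with the column tail $\sum_{n\ge i}$ under the interchange; this is immediate from the convention $r_n^i=0$ for $i>n$ adopted in the proof of Corollary \ref{combinecases}, which forces $A_n^i=\emptyset$ in that range.
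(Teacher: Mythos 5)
Your proof is correct, and it takes a genuinely different (and in fact sharper) route than the paper's. The paper also relies solely on property (c) of Corollary \ref{combinecases}, but runs a two-parameter $\epsilon$-splitting argument: given $\epsilon>0$, it picks $N_1$ with $\sum_{i\ge N_1}2^{-i}<\epsilon/2$, bounds the tail $\sum_{i=N_1}^n\int_{A_n^i}\rho_n$ by the corresponding column sums from (c), and then disposes of the finitely many remaining indices $i<N_1$ by noting that for each fixed $i$ one has $\int_{A_n^i}\rho_n\to 0$ as $n\to\infty$ (again a consequence of (c)), choosing $N_2$ accordingly. Your argument instead sums the entire triangular array at once: the Tonelli interchange over $\{(n,i):1\le i\le n\}$ gives $\sum_{n=1}^\infty S_n\le\sum_{i=1}^\infty 2^{-i}=1$, so the row sums $S_n$ are not merely null but summable --- a strictly stronger conclusion, obtained with less bookkeeping. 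One cosmetic point: your parenthetical appeal to the convention that $a_{n,i}=0$ for $i>n$ is unnecessary, and is not quite accurate when Corollary \ref{combinecases} is derived from case (1) of Theorem \ref{profdecomp} (there the balls with $i\le k$ are nonempty for all $n$, including $n<i$); but this is harmless, since both the row sums $\sum_{i=1}^n$ and the column tails $\sum_{n\ge i}$ involve only pairs with $i\le n$, so the interchange is exact over the triangular index set without invoking any convention about the other pairs.
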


\begin{proof}  For given $\epsilon > 0$, choose $N_1 \in \mathbb N$ so that 
$\displaystyle \sum_{i=N_1}^\infty \frac{1}{2^i}< \frac{\epsilon}{2}$. If $n \ge N_1$ then
 we can use part (c) of Corollary \ref{combinecases} to write
  $$
  \sum_{i=N_1}^n \int_{A_n^i}\rho_n \le \sum_{i=N_1}^n \sum_{j=i}^\infty \int_{A^i_j} \rho_j \le \sum_{i=N_1}^n\frac{1}{2^i} < \frac{\epsilon}{2}.
  $$  
But we also have from part (c) of Corollary \ref{combinecases} that for each
 fixed $i \in \mathbb N$, $\displaystyle \lim_{n \to \infty}\int_{A_n^i}\rho_n = 0$.
  Therefore, once $N_1$ has been chosen, we can find $N_2 \in \mathbb N$ so that if $n \ge N_2$ then
  $\displaystyle \sum_{i=1}^{N_1 - 1} \int_{A_n^i}\rho_n < \frac{\epsilon}{2}$.  
It follows that for $n \ge \max(N_1,N_2)$ we have
  $\displaystyle \sum_{i=1}^n \int_{A_n^i} \rho_n < \epsilon$, as desired.  
\end{proof}

 \begin{lem}
 Under the same assumptions as in Lemma \ref{hksplitlem2}, we have
 $$\lim_{n \to \infty} \|w_n\|_{L^p(\mathbb R)} = 0$$
  for all $p$ such that $2 < p \le \infty$.  
\label{lplimit2}
\end{lem}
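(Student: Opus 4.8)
The plan is to obtain the conclusion directly from Lemma \ref{lpvanish}, applied to the sequence $\{w_n\}$ itself. For this it suffices to verify two things: that $\{w_n\}$ is bounded in $H^1(\mathbb R)$, and that the sequence $\{|w_n|^2 + |w_n'|^2\}$ is vanishing in the sense of Definition \ref{defvanish}. Granting these, Lemma \ref{lpvanish} gives, for each $p$ with $2 < p \le \infty$,
\begin{equation*}
\|w_n\|_{L^p(\mathbb R)} \le C_p\left(\sup_{y \in \mathbb R}\int_{B(y,1)}|w_n'|^2 + |w_n|^2\ dx\right)^{\frac12-\frac1p}\|w_n\|_{H^1(\mathbb R)}^{\frac2p},
\end{equation*}
in which the first factor tends to $0$ by vanishing (taking $r=1$ in Definition \ref{defvanish}) and the second factor remains bounded; hence $\|w_n\|_{L^p} \to 0$. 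Note that the case $p=\infty$ is covered directly, since then the exponent of the $H^1$-norm is $0$.

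To see that $\{w_n\}$ is bounded in $H^1$, recall from \eqref{defwn} that $w_n = \phi_n\widetilde\eta_n$ with $0 \le \widetilde\eta_n \le 1$, so that $\|w_n\|_{L^2} \le \|\phi_n\|_{L^2} \le B$. Differentiating, $w_n' = \phi_n'\widetilde\eta_n + \phi_n\widetilde\eta_n'$; the first term has $L^2$-norm at most $B$, while $\widetilde\eta_n'$ is supported in $\cup_{i=1}^n A_n^i$ and, because the balls are disjoint (part (a) of Corollary \ref{combinecases}) and the radii are bounded below, satisfies $\|\widetilde\eta_n'\|_{L^\infty} \le C$ with $C$ depending only on $\eta$, exactly as in the estimates of Lemma \ref{hksplitlem2}. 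Thus $\|\phi_n\widetilde\eta_n'\|_{L^2}^2 \le C\sum_{i=1}^n\int_{A_n^i}\rho_n \le C\|\rho_n\|_{L^1} \le CB^2$, and $\{w_n\}$ is bounded in $H^1$.

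For the vanishing, fix $r > 0$ and $y \in \mathbb R$. By \eqref{supports}, $w_n$ (and hence $w_n'$) is supported in $W_n \cup (\cup_{i=1}^n A_n^i)$, and $w_n \equiv \phi_n$ on $W_n$. On $W_n$ we therefore have $|w_n|^2 + |w_n'|^2 = |\phi_n|^2 + |\phi_n'|^2 \le \rho_n$, while on each annulus $A_n^i$ the bound $|\widetilde\eta_n'| \le C$ gives $|w_n|^2 + |w_n'|^2 \le C\rho_n$, once again with $C$ depending only on $\eta$. Splitting the integral over $B(y,r)$ into its intersection with $W_n$ and the annular pieces, we obtain
\begin{equation*}
\int_{B(y,r)}\left(|w_n|^2 + |w_n'|^2\right)\ dx \le \int_{B(y,r)}\rho_n\chi_{W_n}\ dx + C\sum_{i=1}^n\int_{A_n^i}\rho_n.
\end{equation*}
Taking the supremum over $y \in \mathbb R$, the first term is dominated by $\sup_{y}\int_{B(y,r)}\rho_n\chi_{W_n}$, which tends to $0$ as $n \to \infty$ because $\{\rho_n\chi_{W_n}\}$ is vanishing by part (e) of Corollary \ref{combinecases}; the second term does not depend on $y$ and tends to $0$ by Lemma \ref{sumrhontozero}. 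Hence $\sup_y\int_{B(y,r)}(|w_n|^2 + |w_n'|^2)\to 0$ for every $r > 0$, which is precisely the vanishing we need.

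The only point requiring care is the uniform bound $\|\widetilde\eta_n'\|_{L^\infty} \le C$: since $\widetilde\eta_n' = -\sum_i \eta_{r_n^i}'(\cdot - x_n^i)$ with $\|\eta_{r_n^i}'\|_{L^\infty} = \|\eta'\|_{L^\infty}/r_n^i$ and the summands have disjoint supports, this estimate requires the radii $r_n^i$ of the contributing balls to be bounded below, which holds for $n$ large since $r_n^i \to \infty$ for each $i$. This is the same convention already used in the annular estimates of Lemma \ref{hksplitlem2}, and once it is in force the rest of the argument is a routine combination of the support structure \eqref{supports}, the vanishing of $\{\rho_n\chi_{W_n}\}$, and Lemma \ref{sumrhontozero}.
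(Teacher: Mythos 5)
Your proof is correct and is essentially the paper's own argument: the same pointwise bounds $w_n^2 \le C\rho_n$ and $(w_n')^2 \le C\rho_n$ on the support $W_n \cup \left(\cup_{i=1}^n A_n^i\right)$, the same splitting of $\sup_{y}\int_{B(y,r)}\left(w_n^2+(w_n')^2\right)$ into a $W_n$ piece handled by part (e) of Corollary \ref{combinecases} and an annular piece handled by Lemma \ref{sumrhontozero}, followed by an application of Lemma \ref{lpvanish}. The only differences are that you spell out the $H^1$-boundedness of $\{w_n\}$ and the uniform bound on $\widetilde\eta_n'$, which the paper subsumes into its standing convention (from Lemma \ref{hksplitlem2}) that constants depend only on $\eta$ and $B$.
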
  

\begin{proof}  
For each $n \in \mathbb N$, we have from \eqref{supports}  that 
$w_n = w_n\left(\chi_{W_n}+\sum_{i=1}^n \chi_{A^i_n}\right)$ and
 $w_n' =  w_n' \left(\chi_{W_n}+\sum_{i=1}^n \chi_{A^i_n}\right)$;
 and from the definition of $w_n$ we see that there exists
a constant $C>0$ such that for every $n \in \mathbb N$ and $x \in \mathbb R$,
 $w_n^2(x) \le C \phi_n^2(x) \le C \rho_n(x)$, and
$(w_n')^2(x) \le C\left(\phi_n^2(x) + (\phi_n'(x))^2\right) \le C \rho_n(x)$.  Therefore,  we can write
\begin{equation}
\begin{aligned}
\sup_{y \in \mathbb R}\int_{B(y,1)}& \left(w_n^2 +(w_n')^2\right)\\
&=\sup_{y \in \mathbb R}\left[ \int_{B(y,1)} \left(w_n^2+(w_n')^2\right)\chi_{W_n} 
+ \sum_{i=1}^n\int_{B(y,1)}\left(w^2_n+(w_n')^2\right) \chi_{A^i_n}\right]\\
&\le C\sup_{y \in \mathbb R}\int_{B(y,1)}\rho_n\chi_{W_n} + C\sum_{i=1}^n\int_{A^i_n}\rho_n.
\end{aligned}
\label{wnvanish2}
\end{equation} 
But 
\begin{equation}
\lim_{n \to \infty} \sup_{y \in \mathbb R} \int_{B(y,1)}\rho_n\chi_{W_n} = 0,
\label{rhonvanishesonwn}
\end{equation}
 by part (e) of Corollary \ref{combinecases}.  Combining Lemma \ref{sumrhontozero}, 
 \eqref{wnvanish2}, and \eqref{rhonvanishesonwn} gives
$$
\lim_{n \to \infty} \sup_{y \in \mathbb R}\int_{B(y,1)}\left(w_n^2 + (w_n')^2\right)\ dx = 0.
$$
Since $\{w_n\}$, like $\{\phi_n\}$, is a bounded sequence in $H^1(\mathbb R)$,
 the proof is then completed by applying Lemma \ref{lpvanish}.
\end{proof}

\begin{lem} Under the same assumptions as in Lemma \ref{hksplitlem2}, we have
\begin{equation}
\lim_{N \to \infty} \limsup_{n \to \infty} \sum_{i=N}^n \intR |v_n^i|^3 = 0
\label{limlimsupf3}
\end{equation}
and
\begin{equation}
\lim_{N \to \infty} \limsup_{n \to \infty} \sum_{i=N}^n \intR\left[ \left|v_n^i \left( {v_n^i}'\right)^2\right|+  |v_n^i|^4\right] = 0.
\label{limlimsupf4}
\end{equation}
\end{lem}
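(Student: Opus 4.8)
The plan is to regard the ``tail'' of the profile decomposition as a single function and to show that it becomes asymptotically vanishing, in the precise sense measured by part (f) of Corollary~\ref{combinecases}; the estimates \eqref{limlimsupf3} and \eqref{limlimsupf4} then follow from the interpolation inequality of Lemma~\ref{lpvanish}. For $N \le n$ I set $f_n^N = \sum_{i=N}^n v_n^i$. The decisive structural fact is that the balls $B(x_n^i,r_n^i)$ are pairwise disjoint (part (a) of Corollary~\ref{combinecases}) and that $\supp v_n^i \subseteq B(x_n^i,r_n^i)$ by \eqref{supports}; hence at almost every point at most one tail term $v_n^i$, together with its derivative, is nonzero. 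Consequently $\sum_{i=N}^n |v_n^i|^p = |f_n^N|^p$ and $\sum_{i=N}^n |v_n^i ((v_n^i)')^2| = |f_n^N ((f_n^N)')^2|$ pointwise, so that proving \eqref{limlimsupf3} and \eqref{limlimsupf4} reduces to showing that $\|f_n^N\|_{L^3}^3$, $\|f_n^N\|_{L^4}^4$, and $\intR |f_n^N ((f_n^N)')^2|$ each tend to zero in the iterated limit $\lim_{N \to \infty}\limsup_{n\to\infty}$.

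First I would record a pointwise domination. Since $v_n^i = \phi_n\,\eta_{r_n^i}(\cdot - x_n^i)$ with $|\eta_{r_n^i}| \le 1$ and $|\eta_{r_n^i}'| \le \|\eta'\|_{L^\infty}$ (valid for every relevant index once $n$ is large, because $r_n^i \to \infty$), there is a constant $C$ depending only on $\eta$ such that $|v_n^i|^2 + |(v_n^i)'|^2 \le C\rho_n\chi_{B(x_n^i,r_n^i)}$. Summing over the disjointly supported tail indices and using $\cup_{i=N}^n B(x_n^i,r_n^i) \subseteq \mathbb R \setminus \cup_{i=1}^{N-1} B(x_n^i,r_n^i)$ gives
\begin{equation*}
|f_n^N|^2 + |(f_n^N)'|^2 \le C\,\rho_n\,\chi_{\mathbb R \setminus \cup_{i=1}^{N-1} B(x_n^i,r_n^i)} = C\,g_n^{N-1}.
\end{equation*}
Integrating over $\mathbb R$ shows that $\|f_n^N\|_{H^1} \le CB$ uniformly in $N$ and $n$, while taking the supremum over unit balls gives $\alpha_n^N := \sup_{y \in \mathbb R}\int_{B(y,1)}(|f_n^N|^2 + |(f_n^N)'|^2) \le C\,q_n^{N-1}(1)$, with $q_n^{N-1}$ as in Corollary~\ref{combinecases}. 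Since $q_n^{N-1}(r)$ is nondecreasing in $r$, we have $q_n^{N-1}(1) \le q_n^{N-1}(r)$ for $r \ge 1$, and therefore $\limsup_{n\to\infty}\alpha_n^N \le C\beta_{N-1}$, where $\beta_{N-1} = \lim_{r\to\infty}\limsup_{n\to\infty}q_n^{N-1}(r)$. Part (f) of Corollary~\ref{combinecases} is exactly the statement that $\beta_{N-1} \to 0$ as $N \to \infty$.

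With these bounds the conclusion is routine. Applying Lemma~\ref{lpvanish} to $u = f_n^N$ with $p = 3$ and $p = 4$, and using $\|f_n^N\|_{H^1} \le CB$, gives $\|f_n^N\|_{L^3}^3 \le C(\alpha_n^N)^{1/2}$ and $\|f_n^N\|_{L^4}^4 \le C\alpha_n^N$; both have iterated limit zero by the previous paragraph, which proves \eqref{limlimsupf3} and the $|v_n^i|^4$ part of \eqref{limlimsupf4}. For the remaining term I would estimate $\intR |f_n^N ((f_n^N)')^2| \le \|f_n^N\|_{L^\infty}\|(f_n^N)'\|_{L^2}^2$, bound $\|(f_n^N)'\|_{L^2}^2 \le \|f_n^N\|_{H^1}^2 \le CB^2$, and apply the $p = \infty$ case of Lemma~\ref{lpvanish} to obtain $\|f_n^N\|_{L^\infty} \le C(\alpha_n^N)^{1/2}$; hence $\intR |f_n^N ((f_n^N)')^2| \le C(\alpha_n^N)^{1/2}$, whose iterated limit is again zero.

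The only genuinely delicate step is the second paragraph: one must correctly identify $g_n^{N-1}$ as the density controlling the tail $f_n^N$ and then invoke part (f) in the right order, passing from the unit-ball supremum that actually appears in Lemma~\ref{lpvanish} to the large-$r$ quantity that (f) controls (this is what the monotonicity $q_n^{N-1}(1) \le q_n^{N-1}(r)$ accomplishes). Everything else is disjoint-support bookkeeping combined with the interpolation inequality, so I anticipate no further difficulty.
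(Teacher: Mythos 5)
Your proof is correct and takes essentially the same route as the paper's: both regard the tail as the single function $f_{N,n}=\sum_{i=N}^n v_n^i$ using disjointness of the supports, dominate $|f_{N,n}|^2+|f_{N,n}'|^2$ pointwise by $C\rho_n$ restricted off the first balls, and then combine part (f) of Corollary \ref{combinecases} (via the monotonicity of $q_n^N(r)$ in $r$) with the interpolation inequality of Lemma \ref{lpvanish}. The only deviation is cosmetic: for the mixed term $\int_{\mathbb R}|f_{N,n}(f_{N,n}')^2|$ you use the $p=\infty$ case of Lemma \ref{lpvanish} together with $\|f_{N,n}'\|_{L^2}^2\le\|f_{N,n}\|_{H^1}^2$, whereas the paper uses a H\"older splitting into $L^{8/3}$ and $L^3$ norms; both give an iterated limit of zero, and your $q_n^{N-1}$ indexing is if anything slightly more careful than the paper's.
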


\begin{proof} 
For a fixed value of $n \in \mathbb N$, the supports of the functions $\{v^i_n\}_{i=1,\dots,n}$ are mutually disjoint.  Therefore, if we define  
$$ 
f_{N,n} =\sum_{i=N}^n v^i_n,
$$
 we can write, for all $N,n$ such that $N < n$,
\begin{equation*}
\sum_{i=N}^n \intR \left|v^i_n\right|^3 = \intR \left|f_{N,n}\right|^3.
\end{equation*} 
Now
\begin{equation}
\begin{aligned}
\intR \left|f_{N,n}\right|^3 &\le C \|f_{N,n}\|_{H^1(\mathbb R)}^2\left( \sup_{y \in \mathbb R} \int_{B(y,1)} 
\left(|f_{N,n}'|^2 + |f_{N,n}|^2\right)\right)^{1/2}\\
& \le C \left(\sup_{y \in \mathbb R} \int_{B(y,1)} \rho_n \chi_{\mathbb R \backslash \cup_{i=1}^N B(x^i_n,r^i_n)}\right)^{1/2}
=C \left(q^N_n(1)\right)^{1/2},
\end{aligned}
\label{fNnest}
\end{equation}
where $q^N_n(r)$ is the function defined in part (f) of Corollary \ref{combinecases}. 
 (In obtaining \eqref{fNnest}, we used Lemma \ref{lpvanish} along with the facts that 
the support of $f_{N,n}$ lies outside $\cup_{i=1}^N B(x^i_n,r^i_n)$,
 and that $|f_{N,n}|^2$ and $|f_{N,n}'|^2$ are majorized pointwise by $C \rho_n$, 
where $C$ depends only on the cutoff function $\eta$.)   Since $q^N_n(r)$ is an
 increasing function of $r$, it follows from part (f) of Corollary \ref{combinecases} that 
\begin{equation}
 \lim_{N \to \infty} \limsup_{n \to \infty} q^N_n(1)=0.
\label{limlimsup}
\end{equation}
Therefore \eqref{limlimsupf3} follows from \eqref{fNnest}.

Similarly, we can use Lemma \ref{lpvanish} to write
\begin{equation} 
\begin{aligned}
  \sum_{i=N}^n \intR  |v_n^i|^4&=
\intR (f_{N,n})^4 \le C \|f_{N,n}\|_{H^1(\mathbb R)}^2
\left( \sup_{y \in \mathbb R} \int_{B(y,1)} \left(|f_{N,n}'|^2 + |f_{N,n}|^2\right)\right) \\
&\le C q^N_n(1),
\end{aligned}
\label{f4}
\end{equation}
and by H\"older's inequality, Sobolev embedding, and \eqref{fNnest}, we have
\begin{equation}
\begin{aligned}
 \sum_{i=N}^n \intR \left|v_n^i \left( {v_n^i}'\right)^2\right| & = \intR \left|f_{N,n}(f_{N,n}')^2\right|
 \le \left(\intR  \left|f_{N,n}'\right|^{8/3} 
 \right)^{3/4}\left(\intR\left|f_{N,n}\right|^3     \right)^{1/3} \\
&\le C \|f_{N,n}\|_{H^1(\mathbb R)}^{8/3} \left(q_n^N(1)\right)^{1/6}.
\end{aligned}
\label{ffp2}
\end{equation}
Estimates \eqref{f4} and \eqref{ffp2} together with \eqref{limlimsup} then imply \eqref{limlimsupf4}.
\end{proof}

Fix $i \in \mathbb N$, and for $n \in \mathbb N$ define 
\begin{equation}
\theta^i_n(x)=v^i_n(x+x^i_n).
\label{defthetain}
\end{equation}
  Since $\{\theta^i_n\}_{n \in \mathbb N}$
 is a bounded sequence in $H^2(\mathbb R)$, then by passing to a subsequence, we may assume that it converges weakly
 in $H^2(\mathbb R)$.  In fact, by a diagonalization argument,  we may replace 
$\{\phi_n\}_{n \in \mathbb N}$ by a subsequence with the property that we may assume  that  $\{\theta_n^i\}_{n \in \mathbb N}$ converges weakly in $H^2(\mathbb R)$ for every $i \in \mathbb N$.  
  (In what follows we will often replace sequences by subsequences without changing notation.)  Define, for each $i \in \mathbb N$,
 \begin{equation}
 g_i = \text{weak limit in $H^2(\mathbb R)$ of $\{\theta^i_n\}_{n \in \mathbb N}$}.
 \label{defgi}
 \end{equation}    Further,
 by again passing to appropriate subsequences, we may assume that the sequences
 $\{E_2(\theta^i_n)\}_{n \in \mathbb N}$ and $\{E_3(\theta^i_n)\}_{n \in \mathbb N}$ converge.  Define
\begin{equation}
\begin{aligned}
 a_i &=\lim_{n \to \infty} E_2(\theta^i_n) = \lim_{n \to \infty} E_2(v^i_n)\\
 b_i &=\lim_{n \to \infty} E_3(\theta^i_n) = \lim_{n \to \infty} E_3(v^i_n).
\end{aligned}
\label{defaibi}
\end{equation}

\begin{lem}   Under the same assumptions as in Lemma \ref{hksplitlem2}, define $\{\theta^i_n\}$, $g_i$, $a_i$, and $b_i$ as above.  Then for each $i \in \mathbb N$,  $\{\theta^i_n\}_{n \in \mathbb N}$ converges to $g_i$ in the norm of $H^1(\mathbb R)$.  
\label{giconvstronginf}
\end{lem}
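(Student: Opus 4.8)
The plan is to exploit two facts: that the weak $H^2(\mathbb R)$ convergence of $\{\theta^i_n\}$ upgrades to strong $H^1$ convergence on every bounded interval, and that the concentration property in part (d) of Corollary \ref{combinecases} forces the $H^1$-energy of $\theta^i_n$ to be uniformly tight, so that only a controlled amount can escape into the tails. Splitting $\mathbb R$ into a large ball and its complement, the first fact kills the contribution on the ball while the second makes the contribution on the complement uniformly small. The local strong convergence is routine Rellich compactness; the real work is the tail estimate, and in particular translating the concentration of $\{\rho_n\chi_{B(x^i_n,r^i_n)}\}$ around the \emph{moving} centers $\{x^i_n\}$ into uniform tightness of the \emph{fixed}-center profiles $\theta^i_n$.

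First I would recall that, by \eqref{defgi}, $\theta^i_n \rightharpoonup g_i$ weakly in $H^2(\mathbb R)$, so $\{\theta^i_n\}$ is bounded in $H^2$ and $g_i \in H^2(\mathbb R)$. Fix $R > 0$. Since restriction to $B(0,R)$ is a bounded linear map, and bounded linear maps are weak-to-weak continuous, $\theta^i_n \rightharpoonup g_i$ weakly in $H^2(B(0,R))$; and since the embedding $H^2(B(0,R)) \hookrightarrow H^1(B(0,R))$ is compact, a weakly convergent sequence is carried to a strongly convergent one, so $\theta^i_n \to g_i$ strongly in $H^1(B(0,R))$. Thus $\lim_{n\to\infty}\|\theta^i_n - g_i\|_{H^1(B(0,R))} = 0$ for every fixed $R$.

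Next comes the tail estimate. Writing $\theta^i_n(x) = \phi_n(x+x^i_n)\,\eta_{r^i_n}(x)$ and differentiating, I would use $0 \le \eta \le 1$, the bound $|\eta_{r^i_n}'(x)| \le \|\eta'\|_{L^\infty}/r^i_n$, and $r^i_n \to \infty$ to obtain, for all $x$ and all sufficiently large $n$, a pointwise bound
$$
|\theta^i_n(x)|^2 + |(\theta^i_n)'(x)|^2 \le C\,\rho_n(x+x^i_n),
$$
with $C$ depending only on $\eta$. Since $\theta^i_n$ is supported in $B(0,r^i_n)$, integrating over $\mathbb R \setminus B(0,R)$ and changing variables $y = x + x^i_n$ gives
$$
\|\theta^i_n\|_{H^1(\mathbb R \setminus B(0,R))}^2 \le C\int_{B(x^i_n,r^i_n)\setminus B(x^i_n,R)}\rho_n\,dy \le C\int_{\mathbb R \setminus B(x^i_n,R)}\rho_n\chi_{B(x^i_n,r^i_n)}\,dy.
$$
By part (d) of Corollary \ref{combinecases}, the sequence $\{\rho_n\chi_{B(x^i_n,r^i_n)}\}$ concentrates around $\{x^i_n\}$, so by Definition \ref{defconcentrate}, given $\epsilon > 0$ there is an $R_\epsilon$ for which the right-hand side is $< C\epsilon$ for every $n$; hence the $H^1$ tails of $\theta^i_n$ are uniformly small.

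Finally I would combine the two ingredients. Given $\epsilon > 0$, choose $R \ge R_\epsilon$ large enough that also $\|g_i\|_{H^1(\mathbb R \setminus B(0,R))} < \epsilon$, which is possible since $g_i \in H^1(\mathbb R)$. Then
$$
\|\theta^i_n - g_i\|_{H^1(\mathbb R)}^2 \le \|\theta^i_n - g_i\|_{H^1(B(0,R))}^2 + 2\|\theta^i_n\|_{H^1(\mathbb R\setminus B(0,R))}^2 + 2\|g_i\|_{H^1(\mathbb R \setminus B(0,R))}^2,
$$
and letting $n \to \infty$ the first term vanishes by the second paragraph while the last two are bounded by a constant times $\epsilon$ by the third. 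Since $\epsilon > 0$ was arbitrary, this yields $\theta^i_n \to g_i$ in $H^1(\mathbb R)$. The one delicate step, and the place where the profile decomposition is genuinely used, is the change of variables that converts concentration around the moving centers into uniform tightness of the stationary profiles.
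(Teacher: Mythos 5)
Your proof is correct and follows essentially the same route as the paper's: strong convergence on a fixed ball via the compactness of $H^2(B(0,R)) \hookrightarrow H^1(B(0,R))$, combined with a uniform tail bound obtained from part (d) of Corollary \ref{combinecases} through exactly the same change of variables $y = x + x_n^i$. The only difference is a minor streamlining: by using that compact embeddings carry weakly convergent sequences to strongly convergent ones, you get full-sequence local convergence directly and thereby avoid the subsequence-extraction and diagonalization argument with which the paper concludes.
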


\begin{proof} First note that  
by part (d) of Corollary \ref{combinecases}, for every $\epsilon > 0$ there exists $R_\epsilon > 0$ such that 
$$
\int_{B(x^i_n,r^i_n)\backslash B(x^i_n,R_\epsilon)}\rho_n  <\epsilon^2
$$
for all $n \in \mathbb N$. By taking $R_\epsilon$ larger if necessary, we may assume as well that
$$
\int_{\mathbb R \backslash B(0,R_\epsilon)}\left( (g_i'')^2 + (g_i')^2 + g_i^2\right) < \epsilon^2.
$$  
From the definition of $\theta^i_n$, it is easy to see that there exists a constant $C$ such that  for all $n$,
$$
\|\theta^i_n\|^2_{H^2(\mathbb R \backslash B(0,R_\epsilon))} \le C 
\int_{B(x^i_n,r^i_n)\backslash B(x^i_n,R_\epsilon)}\rho_n,
$$
and therefore
$$
\|\theta^i_n - g_i\|_{H^2(\mathbb R \backslash B(0,R_\epsilon))} \le \|\theta^i_n\|_{H^2(\mathbb R \backslash B(0,R_\epsilon))} +
 \|g_i\|_{H^2(\mathbb R \backslash B(0,R_\epsilon))} < (\sqrt C +1)\epsilon.
$$ 
On the other hand, since the inclusion of $H^2(B(0,R_\epsilon))$ into $H^1(B(0,R_\epsilon))$ is compact, then 
$\{\theta^i_n\}_{n \in \mathbb N}$  has a subsequence $\{\theta^i_{n_k}\}_{k \in \mathbb N}$ that
 converges strongly to $g_i$ in $H^1(B(0,R_{\epsilon}))$.  Then for all sufficiently large $k$,
 $\|\theta^i_{n_k} - g_i\|_{H^1(B(0,R_\epsilon))}< \epsilon$, and therefore $\|\theta^i_{n_k} - g_i\|_{H^1(\mathbb R)} < (\sqrt C + 2)\epsilon$.   

It follows from the preceding that for every $\epsilon > 0$, there exists a subsequence $\{\theta^i_{n_k}\}_{k \in \mathbb N}$ of $\{\theta^i_n\}_{n \in \mathbb N}$
 such that   $\|\theta^i_{n_k} - g_i\|_{H^1(\mathbb R)} < \epsilon$ for all $k \in \mathbb N$.
  By taking a sequence of values of $\epsilon$ tending to zero and using a diagonalization argument,
 we obtain a subsequence of $\{\theta^i_n\}_{n \in \mathbb N}$ which converges to $g_i$ strongly in $H^1(\mathbb R)$. 
 Since the same argument shows that every subsequence of $\{\theta^i_n\}_{n \in \mathbb N}$ has a
 subsubsequence which converges to $g_i$ in $H^1(\mathbb R)$,
 it follows that $\{\theta^i_n\}_{n \in \mathbb N}$ itself converges to 
$g_i$ in $H^1(\mathbb R)$. 
\end{proof}

\begin{lem} Under the same assumptions as in Lemma \ref{giconvstronginf}, we have, for each $i \in \mathbb N$, 
\begin{equation}
\lim_{n \to \infty} \intR |v_n^i|^p = \intR |g_i|^p
\label{vntogip}
\end{equation}
for all $p$ such that $2 \le p < \infty$, and 
\begin{equation}
\lim_{n \to \infty}\intR v_n^i \left({v_n^i}' \right)^2 = \intR g_i(g_i')^2.
\label{vntogiinmixedterm}
\end{equation}
 In particular, 
\begin{equation}
\begin{aligned}
\lim_{n \to \infty} E_2(v_n^i) &= E_2(g_i)=a_i\\
\lim_{n \to \infty} E_3(v_n^i) &= E_3(g_i)=b_i.
\end{aligned} 
\label{e2e3contonh1}
\end{equation}
\label{vnitogilowertermslem}
\end{lem}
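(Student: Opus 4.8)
The plan is to reduce everything to the strong $H^1(\mathbb R)$ convergence $\theta^i_n \to g_i$ established in Lemma \ref{giconvstronginf}, exploiting that every integral appearing in the statement is invariant under the translation $x \mapsto x + x^i_n$. Indeed, since $\theta^i_n(x) = v^i_n(x+x^i_n)$, a change of variables gives $\intR |v^i_n|^p = \intR |\theta^i_n|^p$ and $\intR v^i_n({v^i_n}')^2 = \intR \theta^i_n ({\theta^i_n}')^2$, so it suffices to prove the corresponding statements with $v^i_n$ replaced by $\theta^i_n$ and with $g_i$ as the limit.

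First I would establish \eqref{vntogip}. Since $\theta^i_n \to g_i$ strongly in $H^1(\mathbb R)$ and the Sobolev embedding $H^1(\mathbb R) \hookrightarrow L^p(\mathbb R)$ is continuous for every $p$ with $2 \le p \le \infty$, it follows that $\theta^i_n \to g_i$ strongly in $L^p(\mathbb R)$ for $2 \le p < \infty$ and uniformly on $\mathbb R$. Strong $L^p$ convergence yields $\|\theta^i_n\|_{L^p} \to \|g_i\|_{L^p}$, which is exactly $\intR |\theta^i_n|^p \to \intR |g_i|^p$, proving \eqref{vntogip}.

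Next, for the mixed term \eqref{vntogiinmixedterm} I would write, abbreviating $u_n = \theta^i_n$ and $u = g_i$,
\begin{equation*}
u_n ({u_n}')^2 - u (u')^2 = (u_n - u)({u_n}')^2 + u\,({u_n}' - u')({u_n}' + u'),
\end{equation*}
and integrate. The first integral is bounded by $\|u_n - u\|_{L^\infty}\|{u_n}'\|_{L^2}^2$, which tends to $0$ because $u_n \to u$ uniformly while $\|{u_n}'\|_{L^2}$ stays bounded; the second is bounded by $\|u\|_{L^\infty}\|{u_n}' - u'\|_{L^2}\|{u_n}' + u'\|_{L^2}$, which tends to $0$ because ${u_n}' \to u'$ in $L^2$ while $\|u\|_{L^\infty}$ is finite and $\|{u_n}' + u'\|_{L^2}$ is bounded. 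Hence $\intR u_n({u_n}')^2 \to \intR u(u')^2$, which is \eqref{vntogiinmixedterm}.

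Finally, the displayed limits \eqref{e2e3contonh1} follow at once: $E_2(v^i_n) = \tfrac12 \intR |v^i_n|^2 \to \tfrac12 \intR |g_i|^2 = E_2(g_i)$ by \eqref{vntogip} with $p=2$, and for $E_3$ the term $\tfrac12 \intR ({v^i_n}')^2 = \tfrac12\intR ({\theta^i_n}')^2 \to \tfrac12\intR (g_i')^2$ by $H^1$ convergence, while $\intR (v^i_n)^3 \to \intR g_i^3$ follows from strong $L^3$ convergence (for the signed integral, estimate $\intR|u_n^3 - u^3| \le \|u_n-u\|_{L^3}(\|u_n\|_{L^3}^2 + \|u_n\|_{L^3}\|u\|_{L^3} + \|u\|_{L^3}^2)$ via H\"older). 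Combined with the definitions \eqref{defaibi} of $a_i$ and $b_i$ as the limits of $E_2(v^i_n)$ and $E_3(v^i_n)$, this gives $E_2(g_i) = a_i$ and $E_3(g_i) = b_i$. The only step requiring genuine care is the mixed term \eqref{vntogiinmixedterm}, since the derivatives converge only in $L^2$ and must be paired against each other; the decomposition above is designed precisely so that in each resulting integral one factor is controlled by the uniform convergence of $u_n$ and the other by the $L^2$ convergence of ${u_n}'$.
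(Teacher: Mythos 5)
Your proof is correct and follows essentially the same route as the paper: Sobolev embedding of $H^1(\mathbb R)$ into $L^p$ to get \eqref{vntogip}, and the identical decomposition $(u_n-u)(u_n')^2 + u(u_n'-u')(u_n'+u')$ for the mixed term, with the same pairing of uniform convergence against $L^2$ convergence. If anything, you are slightly more careful than the paper on two minor points: you make the translation invariance reducing $v_n^i$ to $\theta_n^i$ explicit, and you spell out the H\"older estimate needed for the signed cube $\intR (v_n^i)^3$, which the paper leaves implicit.
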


\begin{proof}
Equation \eqref{vntogip} follows from Lemma \ref{giconvstronginf} and the fact that, 
by standard Sobolev embedding theorems, $L^p$ embeds continuously in $H^1(\mathbb R)$ when $2 \le p \le \infty$.
 For \eqref{vntogiinmixedterm}, we can write
\begin{equation*}
\intR v_n^i ({v_n^i}')^2 - \intR g_i(g_i')^2 = \intR (v_n^i - g_i)\left({v_n^i}' \right)^2 + \intR g_i ({v_n^i}'-g_i')({v_n^i}'+g_i').
\end{equation*}
Using H\"older's inequality and Sobolev embedding, we can majorize the integrals on the right-hand side by
\begin{equation*}
\begin{aligned}
\|v_n^i &- g_i\|_{L^\infty(\mathbb R)}  \|v_n^i\|_{H^1(\mathbb R)}^2 + 
\|g_i\|_{L^\infty(\mathbb R)}\|v_n^i - g_i\|_{H^1(\mathbb R)} \left( \|v_n^i\|_{H^1(\mathbb R)}+ \|g^i\|_{H^1(\mathbb R)}\right)\\
& \le C \|v_n^i - g_i\|_{H^1(\mathbb R)}  \left( \|v_n^i\|_{H^1(\mathbb R)}^2+ \|g^i\|_{H^1(\mathbb R)}^2\right).
\end{aligned}
\end{equation*}
Since $g_i \in H^1(\mathbb R)$, and $\{v_n^i\}_{n \in \mathbb N}$ converges to $g_i$ in $H^1(\mathbb R)$,
the preceding expression has limit zero as $n \to \infty$, proving  \eqref{vntogiinmixedterm}.  Finally, \eqref{e2e3contonh1} follows immediately from Lemma \ref{giconvstronginf} and \eqref{vntogip}.
\end{proof}

\begin{lem}   Under the same assumptions as in Lemma \ref{giconvstronginf}, we have, for each $i \in \mathbb N$, 
 \begin{equation}
 E_4(g_i) \le \liminf_{n \to \infty} E_4(\theta^i_n) = \liminf_{n \to \infty} E_4(v^i_n).
 \label{e4glte4v}
 \end{equation}
\label{e4thetanlem}
\end{lem}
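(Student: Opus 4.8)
The plan is to exploit the fact that, among the three terms making up $E_4$, only the highest-order term $\frac12\int u_{xx}^2$ involves the second derivative, while the other two involve only $u$ and $u_x$, for which strong $H^1(\mathbb R)$ convergence has already been established in Lemma \ref{giconvstronginf}. First I would dispose of the equality in \eqref{e4glte4v}: since $E_4$ is an integral over all of $\mathbb R$ and $\theta^i_n$ is merely a translate of $v^i_n$, we have $E_4(\theta^i_n)=E_4(v^i_n)$ for every $n$, and in fact each of the three summands in $E_4$ is individually unchanged by this translation. Thus it suffices to prove $E_4(g_i)\le \liminf_{n\to\infty}E_4(\theta^i_n)$, and I may freely pass between $\theta^i_n$ and $v^i_n$ whenever convenient.

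Next I would write
$$
E_4(\theta^i_n)=\frac12\intR \big((\theta^i_n)''\big)^2\,dx - \frac56\intR \theta^i_n\big((\theta^i_n)'\big)^2\,dx + \frac{5}{32}\intR(\theta^i_n)^4\,dx
$$
and handle the two lower-order terms by invoking Lemma \ref{vnitogilowertermslem}: equation \eqref{vntogip} with $p=4$ gives $\intR(\theta^i_n)^4\to\intR g_i^4$, and \eqref{vntogiinmixedterm} gives $\intR \theta^i_n\big((\theta^i_n)'\big)^2\to\intR g_i(g_i')^2$ (using translation invariance once more to pass between $\theta^i_n$ and $v^i_n$). Hence the sum of these two terms converges:
$$
\lim_{n\to\infty}\left(-\frac56\intR \theta^i_n\big((\theta^i_n)'\big)^2\,dx+\frac{5}{32}\intR(\theta^i_n)^4\,dx\right)= -\frac56\intR g_i(g_i')^2\,dx+\frac{5}{32}\intR g_i^4\,dx.
$$

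For the remaining top-order term, the key point is weak lower semicontinuity of the $L^2$ norm. By \eqref{defgi}, $\theta^i_n\rightharpoonup g_i$ weakly in $H^2(\mathbb R)$; since $u\mapsto u''$ is a bounded linear map from $H^2(\mathbb R)$ to $L^2(\mathbb R)$, it follows that $(\theta^i_n)''\rightharpoonup g_i''$ weakly in $L^2(\mathbb R)$, and therefore $\|g_i''\|_{L^2}^2\le \liminf_{n\to\infty}\|(\theta^i_n)''\|_{L^2}^2$. Finally I would combine: as the sum of the two lower-order terms converges while the top-order term only obeys lower semicontinuity, the elementary inequality $\liminf(a_n+b_n)\ge \liminf a_n+\lim b_n$ (valid since $\{b_n\}$ converges) gives
$$
\liminf_{n\to\infty}E_4(\theta^i_n)\ge \frac12\|g_i''\|_{L^2}^2-\frac56\intR g_i(g_i')^2\,dx+\frac{5}{32}\intR g_i^4\,dx=E_4(g_i),
$$
which is the desired inequality.

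I do not anticipate a genuine obstacle here, since the statement is essentially a packaging of results already in hand; the only point requiring care is the correct combination of a $\liminf$ of a sum in which one part merely satisfies lower semicontinuity (the convex quadratic top-order term) while the other part converges (the subcritical terms controlled by the $H^1$ convergence of Lemma \ref{giconvstronginf}). The structural reason the argument works is that $E_4$ is, modulo terms that are continuous with respect to $H^1(\mathbb R)$ convergence, a positive multiple of the convex functional $u\mapsto\|u''\|_{L^2}^2$, which is weakly lower semicontinuous on $H^2(\mathbb R)$.
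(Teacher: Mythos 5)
Your proposal is correct and follows essentially the same route as the paper: continuity of the lower-order terms under the strong $H^1(\mathbb R)$ convergence of Lemma \ref{giconvstronginf} (via Lemma \ref{vnitogilowertermslem}), plus weak lower semicontinuity for the quadratic top-order term $\frac12\intR (u'')^2$. The only cosmetic difference is that the paper extracts a subsequence realizing the $\liminf$ and uses lower semicontinuity of the full $H^2$ norm together with strong $H^1$ convergence, whereas you work with the whole sequence and note directly that $(\theta^i_n)''\rightharpoonup g_i''$ weakly in $L^2(\mathbb R)$; these are interchangeable formulations of the same argument.
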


\begin{proof} Choose a subsequence $\{\theta^i_{n_k}\}_{k \in \mathbb N}$ of $\{\theta^i_n\}_{n \in \mathbb N}$
 such that $\displaystyle \lim_{k \to \infty}E_4(\theta^i_{n_k})= \liminf_{n \to \infty} E_4(\theta^i_n)$. 
  By the weak compactness of bounded sets in Hilbert space, we can find a further subsequence, 
also denoted by $\{\theta^i_{n_k}\}$, which converges weakly in $H^2(\mathbb R)$ to $g_i$.   
By the lower semicontinuity of the norm in Hilbert space, we have that
$$
\|g_i\|_{H^2} \le \liminf_{k \to \infty} \|\theta^i_{n_k}\|_{H^2}
$$
 Since $\{\theta^i_{n_k}\}_{k \in \mathbb N}$ converges strongly in $H^1(\mathbb R)$ to $g_i$, this implies that
 $$
 \|g_i''\|_{L^2} \le \liminf_{k \to \infty}\|(\theta^i_{n_k})''\|_{L^2}.
 $$ 
On the other hand, by Lemma \ref{vnitogilowertermslem}, we have that
 $$
 \intR \left(-\frac56 g_i g_{ix}^2 + \frac{5}{32} g_i^4\right) =
 \lim_{k \to \infty}\intR \left(-\frac56 \theta^i_{n_k} (\theta^i_{n_k})_x^2 + \frac{5}{32} (\theta^i_{n_k})^4\right).
 $$
 Combining the last two statements, we obtain \eqref{e4glte4v}.
\end{proof}

\section{Minimizers among the set of $N$-solitons} \label{sec:finitemin}

Our analysis of minimizing sequences $\{\phi_n\}$ for the variational problem defined in \eqref{deflambda} and \eqref{defJ} will involve showing that for large $n$, the functions $\phi_n$ are well approximated by sums of widely separated solitary waves.  It will follow that the variational problem can be reduced to that in which the functions $g$ in \eqref{deflambda} are required to be soliton profiles.   For soliton profiles, the values of the functionals $E_2$, $E_3$, and $E_4$ are explicitly given by \eqref{Ekvalue}, and therefore the variational problem can be further reduced to the problem of minimizing sums of certain powers of numbers, subject to the constraint that sums of other powers be held constant.  Our main goal in this section is to solve the latter problem, which is essentially an exercise in multivariable calculus, though the details of the solution are somewhat involved.   The solution is accomplished in Lemmas \ref{ruleoutthreepos} and \ref{ruleoutthreeposinf}.  An immediate consequence of Lemma \ref{ruleoutthreepos} is that, 
among the set of all $N$-soliton profiles for the KdV equation, the ones which minimize $E_4$ subject to the
 constraints that $E_3$ and $E_2$ be held constant are precisely the $1$-soliton and $2$-soliton profiles.  

We note that, although here we only prove a variational characterization of $N$-solitons for the case $N=2$, the results of this section are sufficient to handle the case of general $N$.  As was remarked in the introduction, the obstacle to applying the method of this paper to obtain a variational characterization for general $N$-solitons is not the analysis in this section, but rather the necessity of generalizing the uniqueness result, Theorem \ref{unique}, to arbitrary values of $N$.

\begin{lem}
Suppose $A, B > 0$ and $k \in \mathbb N$.  If the system of equations
\begin{equation}
\begin{aligned}
\sum_{i=1}^k x_i^3 &= A^3\\
\sum_{i=1}^k x_i^5 &= B^5
\end{aligned}
\label{ABsyst}
\end{equation}
has a solution $(x_1,\dots,x_k)$ with $x_k \ge 0$ for $i=1,\dots,k$; then  
\begin{equation}
\left(\frac{1}{k}\right)^{2/15} \le \frac{B}{A} \le 1.
\label{ABexistcond}
\end{equation}
\label{ABexistlem}
\end{lem}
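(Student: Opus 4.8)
The plan is to recognize the two inequalities in \eqref{ABexistcond} as the two sides of the monotonicity, in the exponent $p$, of $\ell^p$-type quantities built from the nonnegative vector $(x_1,\dots,x_k)$, and then to read off the two endpoints as the extreme configurations (a single nonzero $x_i$ versus all the $x_i$ equal). Throughout I would abbreviate $\sigma_p=\sum_{i=1}^k x_i^p$, so that $\sigma_3=A^3$ and $\sigma_5=B^5$; note that $A,B>0$ forces at least one $x_i$ to be strictly positive. The two bounds then come from two standard but opposite facts: the $\ell^p$ norm $\|x\|_p$ is nonincreasing in $p$, while the power mean $M_p=(k^{-1}\sigma_p)^{1/p}$ is nondecreasing in $p$.

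For the upper bound $B\le A$, I would show $\sigma_5\le\sigma_3^{5/3}$, which is just $\|x\|_5\le\|x\|_3$, but which can be seen directly. Since $x_i^3\le\sigma_3=A^3$ gives $x_i\le A$ for each $i$, we have $x_i^5=x_i^2\,x_i^3\le A^2x_i^3$, and summing over $i$ yields $B^5=\sigma_5\le A^2\sigma_3=A^5$. Hence $B\le A$, with equality exactly when a single $x_i$ equals $A$ and the rest vanish; this is the right endpoint of \eqref{ABexistcond}.

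For the lower bound $(1/k)^{2/15}\le B/A$, I would apply H\"older's inequality to $\sigma_3=\sum_{i=1}^k x_i^3\cdot 1$ with the conjugate exponents $5/3$ and $5/2$, obtaining
$$
\sigma_3\le\Bigl(\sum_{i=1}^k x_i^5\Bigr)^{3/5}\Bigl(\sum_{i=1}^k 1\Bigr)^{2/5}=\sigma_5^{3/5}\,k^{2/5}.
$$
Substituting $\sigma_3=A^3$ and $\sigma_5=B^5$ gives $A^3\le k^{2/5}B^3$, that is $A\le k^{2/15}B$, which rearranges to $(1/k)^{2/15}\le B/A$. Equality holds exactly when all the $x_i$ are equal, the left endpoint.

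I do not expect any genuine obstacle here: both bounds are elementary and sharp. The only point requiring a moment's care is selecting the H\"older exponents $(5/3,5/2)$ (equivalently, invoking the power-mean inequality $M_3\le M_5$) so that the exponent of $k$ comes out to exactly $2/15$. I would nonetheless record the equality cases explicitly, since it is precisely these two extreme configurations, one nonzero term and all-equal terms, that are later matched against $1$-soliton and equal-amplitude multi-soliton profiles in the analysis of the system displayed before Lemma \ref{zy7lem}.
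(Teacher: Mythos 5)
Your proposal is correct and follows essentially the same route as the paper: the upper bound $B\le A$ via the observation that each $x_i\le A$ (equivalently, the paper's normalization $p_i=x_i^3/A^3\in[0,1]$ with $p_i^{5/3}\le p_i$, which is the identical inequality $x_i^5\le A^2x_i^3$), and the lower bound via H\"older's inequality with exponents $5/3$ and $5/2$ exactly as in the paper. Your added remarks on the equality cases are correct but not needed for the lemma as stated.
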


\begin{proof}
Suppose the system \eqref{ABsyst} has a solution $(x_1,\dots,x_k)$ with $x_i \ge 0$ for $i=1,\dots,k$.  Defining $p_i = x_i^3/A^3$ for each $i$, we have that  $0 \le p_i \le 1$, so $p_i^{5/3} \le p_i$.  Therefore 
$$
(B/A)^5=  \sum_{i=1}^k p_i^{5/3}\le \sum_{i=1}^k p_i =1,
  $$
 which implies that
$B/A \le 1$.   Also, by H\"older's inequality we have
$$
A^3=\sum_{i=1}^k x_i^3 \le \left(\sum_{i=1}^k x_i^5\right)^{3/5}\left(\sum_{i=1}^k 1\right)^{2/5} =B^3 k^{2/5},
$$
which implies that $(1/k)^{2/15} \le B/A$.
\end{proof}

\begin{lem} Suppose $A, B > 0$ and consider the systems
\begin{equation}
\begin{aligned}
y_1^3 + y_2^3 &= A^3\\
y_1^5 + y_2^5 &= B^5
\end{aligned}
\label{alphbetequation}
\end{equation}
and
\begin{equation}
\begin{aligned}
2 y_1^3 + y_2^3 &= A^3\\
2 y_1^5 + y_2^5 &= B^5
\end{aligned}
\label{delgamequation}
\end{equation}
for $(y_1,y_2)$ in the first quadrant $U=\left\{(y_1,y_2)\in \mathbb R^2: y_1 \ge 0, y_2 \ge 0\right\}$.
\begin{enumerate}
\item
If $B/A=1$,  then \eqref{alphbetequation} has exactly two solutions in $U$, given by $(0,A)$ and $(A,0)$, and and \eqref{delgamequation} has exactly one solution in $U$, given by $(0,A)$.

\item If $(1/2)^{2/15}<B/A <1$, then \eqref{alphbetequation} has exactly two solutions in $U$, which are of the form $(\alpha,\beta)$ and $(\beta,\alpha)$,where $0<\alpha < \beta$; and \eqref{delgamequation} has exactly one solution in $U$, which is of the form $(\gamma,\delta)$ where $0<\gamma<\delta$.

\item If $B/A=(1/2)^{2/15}$, then \eqref{alphbetequation} has exactly one solution in $U$, which is given by $(A/2^{1/3},A/2^{1/3})$; and \eqref{delgamequation} has exactly two solutions in $U$: one given by $(A/2^{1/3},0)$, and one of the form $(\gamma,\delta)$ where $0 < \gamma < \delta$.

\item If $(1/3)^{2/15} < B/A < (1/2)^{2/15}$, then \eqref{alphbetequation} has no solutions in $U$, and \eqref{delgamequation} has exactly two solutions in $U$, which are of the form $(\gamma_1,\delta_1)$ and $(\gamma_2,\delta_2)$, where $0 < \gamma_1 < \delta_1$ and $0 < \delta_2 < \gamma_2$.

\item If $B/A=(1/3)^{2/15}$, then \eqref{alphbetequation} has no solutions in $U$, and \eqref{delgamequation} has exactly one solution in $U$, which is given by $(A/3^{1/3},A/3^{1/3})$.

\end{enumerate}
\label{delgamlemma}
\end{lem}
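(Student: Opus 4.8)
The plan is to reduce both systems to the analysis of the level sets of a single strictly convex function of one real variable. First I would remove the parameter $A$ by scaling: if $(y_1,y_2)$ solves \eqref{alphbetequation} (resp.\ \eqref{delgamequation}) for given $A,B>0$, then $(y_1/A,y_2/A)$ solves the same system with $A$ replaced by $1$ and $B$ replaced by $t:=B/A$. Hence it suffices to treat $A=1$, with $t$ as the sole parameter, and to recover the solutions for general $A$ by multiplying each coordinate by $A$; this is exactly what produces points such as $(A/2^{1/3},A/2^{1/3})$ and $(A/3^{1/3},A/3^{1/3})$ in the statement. For \eqref{alphbetequation} I would then set $s=y_1^3$, which by the first equation satisfies $s\in[0,1]$ with $y_2^3=1-s$, so the second equation becomes $F_1(s)=t^5$, where I define
$$
F_c(s)=c\,s^{5/3}+(1-s)^{5/3},\qquad s\in[0,1].
$$
For \eqref{delgamequation} I would instead set $s=2y_1^3\in[0,1]$, so that $y_1^3=s/2$, $y_2^3=1-s$, and the second equation becomes $F_c(s)=t^5$ with $c=2^{-2/3}$. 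Since $s\mapsto(y_1,y_2)=\bigl((s/c')^{1/3},(1-s)^{1/3}\bigr)$ is a homeomorphism of $[0,1]$ onto the relevant constraint curve in $U$, counting solutions in $U$ is equivalent to counting roots $s\in[0,1]$ of $F_c(s)=t^5$, and the ordering of $y_1$ versus $y_2$ is governed by whether $s$ lies below or above the value at which $y_1=y_2$.

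Next I would record the elementary behavior of $F_c$ on $[0,1]$. Differentiating twice gives $F_c''(s)=\tfrac{10}{9}\bigl(c\,s^{-1/3}+(1-s)^{-1/3}\bigr)>0$, so $F_c$ is strictly convex and has a unique minimizer $s^\ast$, located by solving $F_c'(s)=\tfrac53\bigl(c\,s^{2/3}-(1-s)^{2/3}\bigr)=0$. In the symmetric case $c=1$ relevant to \eqref{alphbetequation}, one has $F_1(s)=F_1(1-s)$, minimizer $s^\ast=1/2$, minimum value $F_1(1/2)=2^{-2/3}$, and $F_1(0)=F_1(1)=1$. In the asymmetric case $c=2^{-2/3}$ relevant to \eqref{delgamequation}, one computes $s^\ast=2/3$, minimum value $F_c(2/3)=3^{-2/3}$, and boundary values $F_c(0)=1$ and $F_c(1)=2^{-2/3}$. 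I would also note the dictionary between threshold values of $t^5$ and of $t=B/A$: the levels $t^5=1$, $t^5=2^{-2/3}$, and $t^5=3^{-2/3}$ correspond exactly to $B/A=1$, $B/A=(1/2)^{2/15}$, and $B/A=(1/3)^{2/15}$.

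For \eqref{alphbetequation}, strict convexity together with the symmetry $F_1(s)=F_1(1-s)$ shows that $F_1(s)=t^5$ has no root when $t^5<2^{-2/3}$, exactly one root $s=1/2$ when $t^5=2^{-2/3}$, and exactly two roots $s_1<1/2<s_2=1-s_1$ when $2^{-2/3}<t^5<1$ (collapsing to $\{0,1\}$ when $t^5=1$); taking cube roots and reflecting across the diagonal reproduces parts 1, 2, and 3, and nonexistence in the regime $t^5<2^{-2/3}$ yields the first assertion of parts 4 and 5. For \eqref{delgamequation}, the crux is that $F_c$ is strictly decreasing on $[0,2/3]$, from $F_c(0)=1$ down to $3^{-2/3}$, and strictly increasing on $[2/3,1]$, from $3^{-2/3}$ up to only $F_c(1)=2^{-2/3}$. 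Thus the decreasing branch attains a level $t^5$ exactly once precisely when $3^{-2/3}\le t^5\le 1$, while the increasing branch attains it exactly once precisely when $3^{-2/3}\le t^5\le 2^{-2/3}$. Roots on the decreasing branch have $s<2/3$, hence $y_1^3=s/2<1/3<y_2^3$ and so $y_1<y_2$ (type $(\gamma,\delta)$ with $\gamma<\delta$); roots on the increasing branch have $s>2/3$, hence $y_1>y_2$ (type $(\gamma,\delta)$ with $\delta<\gamma$); and the branches meet at $s=2/3$, where $y_1=y_2=3^{-1/3}$. Walking through the levels $t^5=1$, then $2^{-2/3}<t^5<1$, then $t^5=2^{-2/3}$, then $3^{-2/3}<t^5<2^{-2/3}$, and finally $t^5=3^{-2/3}$, then reproduces the solution counts and shapes in parts 1 through 5, after rescaling by $A$.

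The genuinely delicate point is the bookkeeping for \eqref{delgamequation}: because its two monotone branches have \emph{different} ranges ($[3^{-2/3},1]$ versus $[3^{-2/3},2^{-2/3}]$), the horizontal level $t^5$ meets them a different number of times in different regimes, and it is precisely the strict inequalities $3^{-2/3}<2^{-2/3}<1$ that create the five distinct cases together with the transitions at $B/A=(1/2)^{2/15}$ and $B/A=(1/3)^{2/15}$. Keeping careful track of which branch produces which solution, and of the resulting ordering of $y_1$ and $y_2$, is where all the care is needed; the convexity computations and the scaling reduction themselves are routine.
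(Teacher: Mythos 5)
Your proof is correct, and it reaches the same conclusions by a somewhat different reduction than the paper's. The paper parametrizes solutions by the ratio $\theta = y_2/y_1$, which converts \eqref{alphbetequation} into $g(\theta)=(B/A)^{15}$ with $g(t)=(1+t^5)^3/(1+t^3)^5$ on $[0,\infty)$ (and \eqref{delgamequation} into $h(\theta)=(B/A)^{15}$ with $h(t)=(2+t^5)^3/(2+t^3)^5$), and then counts roots by computing $g'$ and $h'$ explicitly to show each function decreases on $[0,1]$ and increases on $[1,\infty)$; since the ratio is undefined when $y_1=0$, solutions on that edge of $U$ have to be accounted for outside the parametrization. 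You instead scale out $A$ and substitute $s=y_1^3$ (resp.\ $s=2y_1^3$), landing on $F_c(s)=c\,s^{5/3}+(1-s)^{5/3}$ over the \emph{compact} interval $[0,1]$, where strict convexity ($F_c''>0$) immediately caps the number of roots at two and the endpoints $s=0,1$ capture the boundary solutions such as $(0,A)$ and $(A/2^{1/3},0)$ within the same framework --- no separate edge case needed. What the paper's route buys is that the swap symmetry $(y_1,y_2)\mapsto(y_2,y_1)$ of the first system is manifest as $g(1/t)=g(t)$, and no rescaling step is required; what yours buys is the uniform treatment of boundary solutions and the cleaner root count via convexity rather than sign analysis of an explicit derivative. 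Your threshold computations ($F_1(1/2)=2^{-2/3}$, $F_{2^{-2/3}}(2/3)=3^{-2/3}$, $F_{2^{-2/3}}(1)=2^{-2/3}$, and the dictionary to $B/A=1,\ (1/2)^{2/15},\ (1/3)^{2/15}$) and the branch-by-branch bookkeeping for \eqref{delgamequation} all check out, including the orderings of $y_1$ versus $y_2$ determined by the position of $s$ relative to the minimizer.
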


\begin{proof} Suppose $(y_1,y_2)$ solves \eqref{alphbetequation} and $y_1, y_2>0$.  Letting $\theta=y_2/y_1$, we obtain that $y_1= A/(1+\theta^3)^{1/3}$ and  $g(\theta)=(B/A)^{15}$, where
\begin{equation}
g(t) := \frac{(1+t^5)^3}{(1+t^3)^5}.
\label{defgtheta}
\end{equation}
Conversely, for each choice of $\theta > 0$ such that $g(\theta)=(B/A)^{15}$, we have a solution $(y_1,y_2)$ of \eqref{alphbetequation} given by the positive numbers $y_1 = A/(1+\theta^3)^{1/3}$ and $y_2 = \theta y_1$.  Therefore, for a given choice of $B/A$, the number of solutions $(y_1,y_2)$ of \eqref{alphbetequation} with $y_1 > 0$ and $y_2 > 0$ is equal to the number of solutions $\theta > 0$ to the equation $g(\theta)=(B/A)^{15}$.

We have that $g(0)=1$, $g(1)=1/4$, $\displaystyle \lim_{t \to \infty} g(t)=1$, and
$$
g'(t)=\frac{15(1+t^5)^2(t^4-t^2)}{(1+t^3)^6},
$$
so $g(t)$ is monotone decreasing for $0 \le t \le 1$ and monotone increasing for $1 \le t < \infty$.
Therefore the equation $g(\theta)=(B/A)^{15}$ has exactly two solutions when $B/A \in ((1/2)^{2/15},1)$, and has exactly one solution when $B/A=(1/2)^{2/15}$.   The assertions of the lemma concerning \eqref{alphbetequation} then follow.

On the other hand, when $y_1 > 0$, we have that $(y_1,y_2)$ solves \eqref{delgamequation} if and only if $y_1=A/(2+\theta^3)^{1/3}$, $y_2 = \theta y_1$, and $h(\theta)=(B/A)^{15}$, where
$$
h(t) := \frac{(2+t^5)^3}{(2+t^3)^5}.
$$
We have $h(0)=1/4$, $h(1)=1/9$ and $\lim_{t \to \infty} h(t)=1$, and
$$
h'(t)=\frac{30(2+t^5)^2(t^4-t^2)}{(2+t^3)^6},
$$
so that $h(t)$ is monotone decreasing for $0 \le t \le 1$ and monotone increasing for $1 \le t < \infty$.  When $B/A = (1/3)^{3/15}$, the equation $h(\theta)=(B/A)^{15}$ has  exactly one solution, namely $\theta = 1$.  When $B/A \in ((1/3)^{2/15},(1/2)^{2/15})$, the equation $h(\theta)=(B/A)^{15}$ has exactly two solutions, one of which is greater than one and one of which is less than one.  When $B/A =(1/2)^{2/15}$ there are again exactly two solutions, one of which is $\theta =0$ and the other of which is a value $\theta > 1$.  Finally, when  $B/A \in ((1/2)^{2/15},1)$, there is exactly one solution to $h(\theta)=(B/A)^{15}$, and it satisfies $\theta > 1$.  These statements imply the  assertions of the lemma concerning \eqref{delgamequation}.
\end{proof}

\begin{defn}  Let
$$
T=\{(A,B) \in \mathbb R^2: A>0, B>0, (1/2)^{2/15} \le B/A \le 1\}.
$$
For each $(A,B) \in T$, we define
$$
m(A,B)=y_1^7 + y_2^7,
$$
where  $(y_1, y_2)$ is the unique solution to  \eqref{alphbetequation} satisfying $0 \le y_1 \le y_2$ and $y_2 > 0$, guaranteed by Lemma \ref{delgamlemma}.
\label{defm}  
\end{defn}

\begin{lem}\

\begin{enumerate}

\item For all $(A,B) \in T$, and for every $\lambda > 0$, we have
\begin{equation}
m(\lambda A,\lambda B) = \lambda^7 m(A,B).
\label{fhomog}
\end{equation}

\item The function $m(A,B)$ is continuous on the set $T$.
\end{enumerate}
\label{mcont}
\end{lem}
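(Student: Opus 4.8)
The plan is to treat the two parts separately, deriving both from the existence and uniqueness of the defining solution $(y_1,y_2)$ of \eqref{alphbetequation} furnished by Lemma \ref{delgamlemma}.

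For part 1, I would argue by uniqueness. Fix $(A,B) \in T$ and $\lambda > 0$, and let $(y_1,y_2)$ be the solution of \eqref{alphbetequation} with $0 \le y_1 \le y_2$ and $y_2 > 0$ used to define $m(A,B)$ in Definition \ref{defm}. Multiplying the two equations of \eqref{alphbetequation} by $\lambda^3$ and $\lambda^5$ respectively shows that $(\lambda y_1,\lambda y_2)$ solves \eqref{alphbetequation} with $A,B$ replaced by $\lambda A,\lambda B$; moreover $0 \le \lambda y_1 \le \lambda y_2$, $\lambda y_2 > 0$, and $(\lambda A,\lambda B) \in T$ since the ratio $B/A$ is unchanged. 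By the uniqueness clause of Lemma \ref{delgamlemma}, $(\lambda y_1,\lambda y_2)$ is exactly the pair defining $m(\lambda A,\lambda B)$, so $m(\lambda A,\lambda B) = (\lambda y_1)^7 + (\lambda y_2)^7 = \lambda^7 m(A,B)$.

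For part 2, the cleanest route is a compactness-plus-uniqueness argument. Let $(A_n,B_n) \to (A,B)$ with all points in $T$, and let $(y_1^n,y_2^n)$ be the corresponding defining solutions. From $(y_i^n)^3 \le (y_1^n)^3 + (y_2^n)^3 = A_n^3$ the sequence $\{(y_1^n,y_2^n)\}$ is bounded, so some subsequence converges to a limit $(\bar y_1,\bar y_2)$. Passing to the limit in \eqref{alphbetequation} gives $\bar y_1^3 + \bar y_2^3 = A^3$ and $\bar y_1^5 + \bar y_2^5 = B^5$; the closed conditions $0 \le \bar y_1 \le \bar y_2$ survive the limit; and $\bar y_2 > 0$ because $2(y_2^n)^3 \ge (y_1^n)^3 + (y_2^n)^3 = A_n^3 \to A^3 > 0$. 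Hence $(\bar y_1,\bar y_2)$ satisfies the defining conditions for $m(A,B)$, and by the uniqueness in Lemma \ref{delgamlemma} it must equal the unique defining pair for $(A,B)$. Since every subsequence of $\{(y_1^n,y_2^n)\}$ has a further subsequence converging to this same limit, the full sequence converges to it, so $(A,B) \mapsto (y_1,y_2)$ is continuous on $T$ and therefore so is $m(A,B) = y_1^7 + y_2^7$.

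The delicate point, and the one I expect to be the main obstacle, is the behavior at the two boundary ratios $B/A = 1$ and $B/A = (1/2)^{2/15}$, where the solution degenerates (respectively $\bar y_1 = 0$ and $\bar y_1 = \bar y_2$). The compactness argument sidesteps a case-by-case implicit-function-theorem analysis by using only the closedness of the conditions $0 \le y_1 \le y_2$, $y_2 > 0$ together with the fact that Lemma \ref{delgamlemma} guarantees a unique defining solution at every point of $T$, including the endpoints. As an alternative, one could invoke part 1 to write $m(A,B) = A^7 m(1,B/A)$ and reduce to continuity of the single-variable function $s \mapsto m(1,s)$ on $[(1/2)^{2/15},1]$, using the parametrization $\theta \mapsto g(\theta)$ of \eqref{defgtheta} from the proof of Lemma \ref{delgamlemma}, whose restriction to $[1,\infty)$ is a continuous increasing bijection onto $[1/4,1)$, to make the solution depend continuously on $s$ and to compute the boundary limits explicitly; but the compactness argument is shorter and disposes of the endpoints without separate computation.
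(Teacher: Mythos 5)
Your proof is correct, and for part 2 it takes a genuinely different route from the paper's. For part 1 the paper simply says the homogeneity is ``an easy consequence of the definition''; your scaling-plus-uniqueness argument is exactly that consequence spelled out, so there is no real difference there. For part 2, the paper argues constructively: writing $y_2=A/(1+\tilde\theta^3)^{1/3}$ and $y_1=\tilde\theta y_2$ with $\tilde\theta=y_1/y_2\in[0,1]$, it observes that the function $g$ of \eqref{defgtheta} restricts to a continuous, strictly decreasing bijection of $[0,1]$ onto $[1/4,1]$, so its inverse $h$ is continuous and $\tilde\theta(A,B)=h((B/A)^{15})$ depends continuously on $(A,B)\in T$; continuity of $m=y_1^7+y_2^7$ follows. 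Note that the paper's use of the ratio $y_1/y_2\in[0,1]$, rather than $y_2/y_1\in[1,\infty)$, is precisely what makes both boundary ratios $B/A=1$ (where $\tilde\theta=0$) and $B/A=(1/2)^{2/15}$ (where $\tilde\theta=1$) come along for free; the alternative you sketch, parametrizing by $g$ on $[1,\infty)$ with range $[1/4,1)$, would indeed miss the point $B/A=1$ and require the separate boundary computation you anticipate, whereas the paper's parametrization does not. Your main argument --- boundedness of the defining pairs via $(y_i^n)^3\le A_n^3$, closedness of the constraints $0\le y_1\le y_2$, positivity of the limit from $2(y_2^n)^3\ge A_n^3\to A^3>0$, identification of any subsequential limit via the uniqueness clause of Lemma \ref{delgamlemma}, and the subsequence principle to upgrade to full convergence --- is a soft compactness-plus-uniqueness proof: it uses no monotonicity of $g$ at all, only that the constrained solution of \eqref{alphbetequation} is unique at each point of $T$, and it treats the degenerate ratios uniformly. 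What you give up is the explicit formula for $m$ that the paper's proof produces (though nothing later in the paper needs it); what you gain is robustness and the avoidance of any endpoint case analysis. Both proofs are complete and correct.
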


\begin{proof}
The homogeneity property \eqref{fhomog} of $m(A,B)$ is an easy consequence of the definition of $m(A,B)$.  To see that
$m(A,B)$ is continuous on $T$, observe first that for given $(A,B) \in T$, the numbers $y_1$ and $y_2$ given in Definition
\ref{defm} are given by $\displaystyle y_2 = A/(\tilde \theta^3 + 1)^{1/3}$ and $y_1 = \tilde \theta y_2$, where
$\tilde \theta=\tilde\theta(A,B)$ is the unique solution in $[0,1]$ of the equation $g(\tilde \theta)=(B/A)^{15}$, and $g$ is the
function defined in \eqref{defgtheta}.  Since $g$ is continuous and monotone decreasing on $[0,1]$, and $g([0,1])=[1/4,1]$,
then the inverse map $h:[1/4,1] \to [0,1]$ defined by $h(g(t)) = t$ is also continuous.  Therefore
$\tilde \theta(A,B) = h((B/A)^{15})$ is continuous on $T$, so $y_2$ and hence also $y_1$ depend continuously on $(A,B)$. 
So $m(A,B)=y_1^7 +y_2^7$ is continuous on $T$ as well.
 \end{proof}

\begin{lem}
Suppose $y_1 \ge y_2 \ge 0$ and $z_1 \ge z_2 \ge 0$, and
\begin{equation}
\begin{aligned}
z_1^3+z_2^3 &\le y_1^3 + y_2^3\\
z_1^5+z_2^5 &\ge y_1^5 + y_2^5.
\label{ziandyi}
\end{aligned}
\end{equation}
Then
\begin{equation}
z_1^7+z_2^7 \ge y_1^7 + y_2^7.
\label{zy7}
\end{equation}
If equality holds in \eqref{zy7}, then $y_1=z_1$, $y_2=z_2$, and equality holds in both parts of \eqref{ziandyi}.
\label{zy7lem}
\end{lem}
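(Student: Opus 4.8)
The plan is to rephrase the inequality as a monotonicity statement about the function $m$ from Definition \ref{defm}, and to prove that monotonicity by a one--parameter calculus computation. First I would dispose of the trivial case $y_1 = 0$ (whence $y_2 = 0$ and every term vanishes) and assume $(y_1,y_2) \ne (0,0)$; note that then $\eqref{ziandyi}$ forces $(z_1,z_2) \ne (0,0)$ as well. Set $\bar A = (z_1^3+z_2^3)^{1/3}$, $\bar B = (z_1^5+z_2^5)^{1/5}$, $A = (y_1^3+y_2^3)^{1/3}$, and $B = (y_1^5+y_2^5)^{1/5}$. Since $(y_1,y_2)$ and $(z_1,z_2)$ are themselves solutions of systems of the form \eqref{alphbetequation}, Lemma \ref{ABexistlem} (with $k=2$) shows that both $(A,B)$ and $(\bar A,\bar B)$ lie in the set $T$, so that $y_1^7+y_2^7 = m(A,B)$ and $z_1^7+z_2^7 = m(\bar A,\bar B)$. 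The hypotheses \eqref{ziandyi} say exactly that $\bar A \le A$ and $\bar B \ge B$, so \eqref{zy7} will follow once I show that $m$ is nonincreasing in its first argument and nondecreasing in its second.

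To establish this, I would work on the level sets of the individual power sums. Fixing the cube sum $t_1^3+t_2^3 = A^3$ and writing $t_1=(A^3-t_2^3)^{1/3}$ with $0 \le t_2 \le (A^3/2)^{1/3}$ (so that $t_1 \ge t_2$), a direct differentiation gives
\begin{equation*}
\frac{d}{dt_2}\bigl(t_1^k+t_2^k\bigr) = k\,t_2^2\bigl(t_2^{\,k-3}-t_1^{\,k-3}\bigr),
\end{equation*}
which is $\le 0$ for $k=5$ and $k=7$ and vanishes only at the endpoints $t_2=0$ and $t_1=t_2$. Thus, along a level set of $P_3$, both $P_5=t_1^5+t_2^5$ and $P_7=t_1^7+t_2^7$ are strictly decreasing functions of $t_2$, so $P_7$ is a strictly increasing function of $P_5$; since $P_5=B^5$, this shows $B \mapsto m(A,B)$ is strictly increasing. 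The analogous computation on a level set of $P_5$ shows that increasing $P_3$ at fixed $P_5$ corresponds to moving toward the balanced configuration, along which $P_7$ strictly decreases; hence $A \mapsto m(A,B)$ is strictly decreasing. Verifying that the intermediate point $(A,\bar B)$ and the relevant one--parameter slices remain inside $T$ is routine from the bounds $B \le \bar B \le \bar A \le A$ together with $B/A \ge (1/2)^{2/15}$.

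Combining the two monotonicities along the chain $m(\bar A,\bar B) \ge m(A,\bar B) \ge m(A,B)$ yields \eqref{zy7}. For the equality statement, I would observe that each inequality in this chain is strict unless the argument being varied is unchanged: the slice functions are continuous and strictly monotone on the closed parameter interval, so equality in \eqref{zy7} forces $\bar A = A$ and $\bar B = B$, which is precisely equality in both parts of \eqref{ziandyi}. Then $(z_1,z_2)$ and $(y_1,y_2)$ are two ordered solutions of the same system \eqref{alphbetequation}, and the uniqueness assertion of Lemma \ref{delgamlemma} gives $z_1=y_1$ and $z_2=y_2$.

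I expect the main obstacle to be the bookkeeping needed to keep every configuration inside the admissible region $T$ and to handle the degenerate boundary configurations cleanly --- in particular, checking that the derivative above degenerates only at the balanced and one--point configurations, which is exactly what makes the monotonicity strict in the interior and underlies the equality characterization. An alternative, more hands-on route avoids $m$ altogether: minimize $z_1^7+z_2^7$ directly over the compact feasible set cut out by \eqref{ziandyi} and $z_1 \ge z_2 \ge 0$, and use the Karush--Kuhn--Tucker conditions to show that any minimizer with $z_1>z_2>0$ must activate both power-sum constraints (whence Lemma \ref{delgamlemma} identifies it as $(y_1,y_2)$), while the boundary configurations $z_2=0$ and $z_1=z_2$ are either infeasible or give a strictly larger value, by the power-mean inequality together with the bound $B/A \ge (1/2)^{2/15}$.
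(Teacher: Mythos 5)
Your proof is correct, and it takes a genuinely different route from the paper's, although both ultimately rest on elementary one-variable monotonicity facts about two-term power sums. The paper never varies the constraint data: it normalizes to the shape ratios $\theta=y_2/y_1$ and $\tilde\theta=z_2/z_1$, deduces $\tilde\theta\le\theta$ from \eqref{ziandyi} via the function $g$ of \eqref{defgtheta} (decreasing on $[0,1]$), introduces a second ratio function $k(t)=(1+t^7)^5/(1+t^5)^7$ (also decreasing on $[0,1]$), and closes with a squeeze on $z_1$, namely $C/(1+\tilde\theta^7)^{1/7}\le B/(1+\tilde\theta^5)^{1/5}\le z_1$; the equality case is traced back through $k(\tilde\theta)=k(\theta)$ and the uniqueness part of Lemma \ref{delgamlemma}. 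You instead recast the claim as two-variable monotonicity of the value function $m$ of Definition \ref{defm} (strictly decreasing in $A$, strictly increasing in $B$), proved by implicit differentiation along level sets of the power sums, and then chain $z_1^7+z_2^7=m(\bar A,\bar B)\ge m(A,\bar B)\ge m(A,B)=y_1^7+y_2^7$ through the intermediate point $(A,\bar B)$. Your derivative formula $k\,t_2^2(t_2^{k-3}-t_1^{k-3})$ and its sign analysis are correct, and the identification $y_1^7+y_2^7=m(A,B)$ via Lemma \ref{ABexistlem} and the uniqueness in Lemma \ref{delgamlemma} is legitimate. What your organization buys is a transparent conceptual statement --- the constrained extremal value is monotone in the constraint data --- which dovetails with Definition \ref{defm} and Lemma \ref{mcont} and is reusable; what it costs is exactly the bookkeeping you flag: one must verify that $(A,\bar B)$ and both one-parameter slices stay inside $T$ (your chain $B\le\bar B\le\bar A\le A$ together with $B/A\ge(1/2)^{2/15}$ does this), and one must note explicitly that the slice functions are strictly monotone on the \emph{closed} parameter intervals even though the derivatives vanish at the endpoints $t_2=0$ and $t_1=t_2$ (a strictly signed derivative on the open interval suffices); that strictness is what drives your equality case. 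The paper's ratio-and-squeeze argument avoids all such domain checks because it never leaves the two given configurations, at the price of a less reusable, more computational statement.
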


\begin{proof} We may assume $y_1 > 0$ and $z_1 > 0$, or otherwise there is nothing to prove.
 Let $A^3=y_1^3+y_2^3$, $B^5=y_1^5+y_2^5$, and $C^7=y_1^7+y_2^7$.  By Lemmas \ref{ABexistlem} and \ref{delgamlemma}, we have $(1/2)^{2/15} \le B/A \le 1$.  Define $\theta = y_2/y_1 \in [0,1]$ and $\tilde \theta = z_2/z_1 \in [0,1]$, and define the function $g$ as in \eqref{defgtheta}.  Then from the definitions of $A$ and $B$ we deduce that $g(\theta)=(B/A)^{15} \in [1/4,1]$; and from \eqref{ziandyi} we have that
 \begin{equation}
 \frac{B}{(1+\tilde \theta^5)^{1/5}} \le z_1 \le \frac{A}{(1+\tilde \theta^3)^{1/3}}, 
 \label{z1squeeze}
 \end{equation}
 which implies that $g(\tilde \theta) \ge (B/A)^{15}=g(\theta)$.  Since, as shown in the proof of Lemma \ref{delgamlemma}, $g$ is monotone decreasing on $[0,1]$, it follows that $\tilde \theta \le \theta$.   Now define 
 $$
 k(t) := \frac{(1+\theta^7)^5}{(1+\theta^5)^7}.
 $$
 Then as in the proof of Lemma \ref{delgamlemma}, an elementary computation (whose details we omit) shows that $k(t)$ is, like $g(t)$, strictly decreasing on $[0,1]$ and strictly increasing on $[1,\infty)$.   Therefore $k(\tilde \theta) \ge k(\theta)=(C/B)^{35}$, and so
 \begin{equation}
 \frac{C}{(1+\tilde \theta^7)^{1/7}} \le \frac{B}{(1+\tilde \theta^5)^{1/5}}.
 \label{C7B5} 
 \end{equation}
Taken with \eqref{z1squeeze}, this implies that
\begin{equation}
\frac{C}{(1+\tilde \theta^7)^{1/7}} \le z_1,
\label{C7}
\end{equation}
which yields \eqref{zy7}.

If equality holds in \eqref{zy7}, then equality also holds in \eqref{C7}, so from \eqref{z1squeeze} and \eqref{C7B5} we have that equality holds in \eqref{C7B5}.
Therefore $k(\tilde \theta)=k(\theta)$. Since $k$ is strictly decreasing on $[0,1]$, this implies that
$\tilde \theta = \theta$, and hence $g(\tilde \theta)=g(\theta)$ and so $\tilde B/\tilde A = B/A$. But from \eqref{ziandyi} we have that $\tilde A \le A$ and $\tilde B \ge B$, so it follows
that $\tilde A = A$ and $\tilde B = B$.   Hence $z_1$ and $z_2$ satisfy the same equation \eqref{alphbetequation} as $y_1$ and $y_2$, so by  Lemma \ref{delgamlemma}, we must have $z_1=y_1$ and $z_2=y_2$.
\end{proof}

\begin{lem}  Suppose $A, B > 0$ and $(1/3)^{2/15} < B/A < 1$.  For $x=(x_1,x_2,x_3) \in \mathbb R^3$, define
\begin{equation*}
\begin{aligned}
g_1(x) & = x_1^3 + x_2^3 + x_3^3\\
g_2(x) & = x_1^5 + x_2^5+ x_3^5\\
f(x) & = x_1^7 + x_2^7 + x_3^7,
\end{aligned}
\end{equation*}
and define
\begin{equation}
\begin{aligned}
\Gamma &= \left\{x \in \mathbb R^3: g_1(x)=A^3\   \text{and $g_2(x)=B^5$}\right\}\\
\Omega &= \left\{x \in \mathbb R^3: \text{$x_1 > 0$, $x_2 > 0$, and $x_3 > 0$}\right\}.
\end{aligned}
\label{defgamomega}
\end{equation}
 Then $\Gamma \cap \Omega$ is nonempty, and is a smooth one-dimensional submanifold of $\mathbb R^3$.  
 
   If we assume further that $B/A \ge (1/2)^{2/15}$, then $\Gamma \cap \Omega$ must consist of three nonempty connected components $\Gamma_1$, $\Gamma_2$, and $\Gamma_3$.  For each $i=1,2,3$, let $\overline \Gamma_i$ denote the closure of $\Gamma_i$, and $\partial \Gamma_i$ the boundary of $\Gamma_i$, in the topology of $\mathbb R^3$.   Then the restriction of $f$ to $\overline \Gamma_i$ takes its maximum value at a single point in $\Gamma_i$, and takes its minimum value $f_{\text{min}}$ on  $\partial \Gamma_i$.   For each $(x_1,x_2,x_3) \in \Gamma \cap \Omega$, we have $f(x_1,x_2,x_3) > f_{\text{min}}$.
 \label{3ge2}
\end{lem}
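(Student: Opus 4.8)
The plan is to treat the three assertions in turn, establishing first a computational description of the critical points of $f$ on $\Gamma$, which then feeds into both the counting of components and the analysis of $f$. For the first assertion I would obtain the manifold structure from the regular value theorem applied to $G=(g_1,g_2)\colon \Omega\to\mathbb R^2$. Since $\nabla g_1 = 3(x_1^2,x_2^2,x_3^2)$ and $\nabla g_2 = 5(x_1^4,x_2^4,x_3^4)$, these are linearly dependent at a point of $\Omega$ exactly when $x_1=x_2=x_3$; but a point of $\Gamma$ with all coordinates equal forces $(B/A)^5 = 3^{-2/3}$, i.e. $B/A = (1/3)^{2/15}$, which is excluded. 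Hence $(A^3,B^5)$ is a regular value of $G$ on $\Omega$ and $\Gamma\cap\Omega$ is a smooth $1$-manifold. Nonemptiness I would get by a connectedness argument: the surface $\{g_1=A^3\}\cap\Omega$ is connected, and on it $g_2$ attains its minimum $A^5 3^{-2/3}$ at the center $(A/3^{1/3},A/3^{1/3},A/3^{1/3})$ (by convexity of $t\mapsto t^{5/3}$) while $\sup g_2 = A^5$ is approached toward the vertices; the intermediate value theorem then yields a point with $g_2=B^5$ for every $B^5 \in (A^5 3^{-2/3}, A^5)$, which is exactly our range.

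Next I would analyze $f|_\Gamma$ by Lagrange multipliers, as this is needed for both remaining parts. A critical point in $\Omega$ satisfies $7x_i^6 = 3\mu x_i^2 + 5\nu x_i^4$, i.e. $7x_i^4 - 5\nu x_i^2 - 3\mu = 0$ for each $i$; as a quadratic in $x_i^2$ this has at most two positive roots, so at a critical point the $x_i$ take at most two distinct values. The all-equal case is excluded as above, so the critical configurations are exactly the $\{p,p,q\}$ patterns, which are solutions of \eqref{delgamequation}. By Lemma~\ref{delgamlemma} there is a unique such solution $(\gamma,\delta)$ with $0<\gamma<\delta$, giving precisely the three points $(\gamma,\gamma,\delta)$, $(\gamma,\delta,\gamma)$, $(\delta,\gamma,\gamma)$. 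At $(\gamma,\gamma,\delta)$ the tangent to $\Gamma$ is spanned by $(1,-1,0)$, and solving for the multipliers gives $\nu=\tfrac75(\gamma^2+\delta^2)$ and $\mu=-\tfrac73\gamma^2\delta^2$. Evaluating the projected Hessian $\nabla^2 f - \mu\nabla^2 g_1 - \nu\nabla^2 g_2 = \mathrm{diag}(42x_i^5 - 6\mu x_i - 20\nu x_i^3)$ on $(1,-1,0)$ then yields $28\gamma^3(\gamma^2-\delta^2) < 0$. Thus each of the three critical points is a \emph{strict} local maximum of $f|_\Gamma$, and these are the only critical points.

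For the second assertion I would first rule out closed-loop components: a circle component is compact, so $f$ would attain a local minimum on it, contradicting that every critical point of $f|_\Gamma$ is a strict local maximum. Hence every component of $\Gamma\cap\Omega$ is an arc whose two ends limit to $\partial\Omega$. Since $B/A<1$ excludes points with two vanishing coordinates, these limit points lie on the faces $\{x_i=0\}$, where they solve \eqref{alphbetequation}. For $(1/2)^{2/15}<B/A<1$, Lemma~\ref{delgamlemma} gives two solutions per face, hence six limit points, each a regular point at which (as one checks, the tangent being $(1,0,0)$) $\Gamma$ crosses the face transversally, accounting for one end; so there are $6/2 = 3$ arcs. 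The endpoint case $B/A=(1/2)^{2/15}$ I would handle separately: each face then contributes only the single midpoint solution, which is a non-regular point of $\Gamma$ at which the curve grazes the face and so accounts for two ends into $\Omega$; again there are six ends and three components.

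Finally, for the third assertion I would note that at every limit point on $\partial\Omega$ the value of $f$ equals $y_1^7+y_2^7 = m(A,B)$ of Definition~\ref{defm}, the same for all six points; so $f\equiv m(A,B)$ on $\bigcup_i \partial\Gamma_i$. On each arc $\overline\Gamma_i$ the two endpoints carry this common value, so by Rolle's theorem each arc contains at least one interior critical point; since there are three arcs and exactly three critical points, each arc contains exactly one, a strict local maximum. Parametrizing the arc, $f$ is then strictly increasing up to this point and strictly decreasing after it, so its maximum is attained at that single interior point, its minimum $f_{\min}=m(A,B)$ is attained precisely on $\partial\Gamma_i$, and $f > f_{\min}$ throughout $\Gamma\cap\Omega$. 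I expect the main obstacle to be the rigorous component count in the second assertion, especially the careful local analysis of the ends at the singular boundary points in the degenerate case $B/A=(1/2)^{2/15}$; the remaining steps are either standard (regular value theorem, Rolle's theorem) or reduce to the explicit second-variation computation above.
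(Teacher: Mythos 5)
Your proposal is correct in substance, and it takes a genuinely different route from the paper's. The paper never counts ends: it builds $\Gamma_1$ constructively, starting at $Q=(\gamma,\gamma,\delta)$ and tracing the curve with the implicit function theorem, using the sign information in the derivative formulas \eqref{x2x3deriv} to continue the branch monotonically until $x_2\downarrow 0$; completeness (that $\Gamma_1,\Gamma_2,\Gamma_3$ exhaust $\Gamma\cap\Omega$) is then proved by an open-and-closed connectedness argument tracing the curve from an arbitrary point back into $\Gamma_1$. Your argument instead uses the classification of $1$-manifolds: circles are impossible because a compact component would carry an interior minimum of $f$, yet every critical point of $f|_{\Gamma\cap\Omega}$ is a strict local maximum; the remaining components are open arcs whose ends must limit to the finite set $\Gamma\cap\partial\Omega$ (each component is closed in $\Omega$ and $\Gamma\cap\overline\Omega$ is compact), and transversality at the regular boundary points (tangent $(1,0,0)$ at $(0,\alpha,\beta)$, since $\alpha\ne\beta$) gives exactly one end per point, hence $6/2=3$ components. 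This buys you the third assertion almost for free: min only at the endpoints where $f\equiv m(A,B)$ by Definition~\ref{defm}, max at the unique interior critical point. Your second-order test is also simpler than the paper's and is correct: with $\nu=\tfrac75(\gamma^2+\delta^2)$ and $\mu=-\tfrac73\gamma^2\delta^2$, evaluating $\nabla^2L=\mathrm{diag}\left(42x_i^5-6\mu x_i-20\nu x_i^3\right)$ on the tangent vector $(1,-1,0)$ gives $28\gamma^3(\gamma^2-\delta^2)<0$; the paper instead computes the determinant of a $5\times5$ bordered Hessian and invokes \cite{Mu} and \cite{LY} to reach the same conclusion.

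The one real debt in your plan is exactly the step you flagged: in the degenerate case $B/A=(1/2)^{2/15}$, the boundary points are $(0,c,c)$, $(c,0,c)$, $(c,c,0)$ with $c=A/2^{1/3}$, and there $\nabla g_1$ and $\nabla g_2$ are parallel, so neither the regular value theorem nor your transversality argument applies; the claim that each such point absorbs exactly two ends must be proved by hand. It is true: writing $x=(u,c+s,c+t)$, the constraint $g_1=A^3$ gives $s+t=O\!\left(u^3\right)$, and eliminating the linear terms between the two constraints gives
\begin{equation*}
s^2+t^2=\frac{u^3}{3c}\left(1+o(1)\right),
\end{equation*}
so near such a point there are no solutions with $u<0$ and exactly two branches, $s\approx -t\approx \pm\, u^{3/2}/\sqrt{6c}$, entering $\Omega$ for $u>0$. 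Until this local analysis (or an equivalent) is supplied, your count of six ends, hence of three components, is unproven precisely when $B/A=(1/2)^{2/15}$. It is worth noting that the paper's curve-tracing argument sidesteps this entirely: it approaches the boundary points from inside $\Omega$ by monotone limits and never needs to know the local structure of $\Gamma$ at them.
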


\begin{proof} Since $(1/3)^{2/15} < B/A < 1$, then by Lemma \ref{delgamlemma}, there exists a solution $(y_1,y_2)=(\gamma,\delta)$ to \eqref{delgamequation} with $0 < \gamma < \delta$.   Setting $x_1=x_2=\gamma$ and $x_3=\delta$ then defines a point $Q = (\gamma,\gamma,\delta) \in \Gamma \cap \Omega$, and shows that $\Gamma \cap \Omega$ is nonempty.  The fact that $\Gamma \cap \Omega$ is a smooth one-dimensional submanifold of $\mathbb R^3$ follows from the implicit function theorem (see, for example, Theorem 1.38 of \cite{W})  and the fact that the gradients $\nabla g_1(x)$ and $\nabla g_2(x)$ are linearly independent at all $x \in \Gamma \cap \Omega$.  Indeed, if for some $c_1, c_2 \in \mathbb R$, with $c_1, c_2$ not both zero, we have $c_1 \nabla g_1(x)+c_2 \nabla g_2(x)=0$, then it follows easily that $x_1= x_2 = x_3$.  But then $g_1(x)=A^3$ and $g_2(x)=B^5$ imply that $B/A = (1/3)^{2/15}$, contradicting our assumption about $B/A$.
 
  Now suppose we are at a point $x_0=(x_{10},x_{20}, x_{30}) \in \Gamma \cap \Omega$ where $x_{20} \ne x_{30}$.  (Note that the point $Q$ defined above is such a point.) Then from the implicit function theorem it follows that there exists a neighborhood $I$ of $x_0$ in $\mathbb R$ such that for all $t \in I$, there are unique numbers $x_2(t)$ and $x_3(t)$ so that $(t,x_2(t),x_3(t)) \in \Gamma \cap \Omega$.  Moreover, $x_2(t)$ and $x_3(t)$ are smooth functions of $t \in I$, with
\begin{equation}
\begin{aligned}
\frac{dx_3}{dt}&= \frac{t^2(x_2^2-t^2)}{x_3^2(x_3^2-x_2^2)}\\
\frac{dx_2}{dt}&=\frac{t^2(t^2-x_3^2)}{x_2^2(x_3^2-x_2^2)}
 \end{aligned}
\label{x2x3deriv}
\end{equation}
on $I$.
 
In particular, this analysis when applied to the point $Q$ shows that that there are functions $x_2(t)$ and $x_3(t)$ defined for $t$ in a neighborhood $I$ of $\gamma$ such that $(t,x_2(t),x_3(t)) \in \Gamma \cap \Omega$ for all $t \in I$, and equations \eqref{x2x3deriv} hold on $I$. From \eqref{x2x3deriv} we have that  $\frac{dx_2}{dt}<0$ at $t=\gamma$, so there exists an $\epsilon > 0$ such that $0 < x_2(t) < t < x_3(t)$ for all $t$ such that $\gamma < t < \gamma + \epsilon$.

Assume now further that $B/A \ge (1/2)^{2/15}$.  Then by Lemma \ref{delgamlemma}, the point $(\gamma,\delta)$ defined above is the only solution $(y_1,y_2)$ to \eqref{delgamequation} with $y_1>0$ and $y_2 > 0$.  Let $S$ be the set of all $t_0 > \gamma$ such that there exist smooth functions $x_2(t)$, $x_3(t)$ defined for all $t \in (\gamma,t_0)$ such that $(t,x_2(t),x_3(t)) \in \Gamma \cap \Omega$ and 
$$
 0 < x_2(t) < t < x_3(t) 
$$ 
for all $t \in (\gamma,t_0)$.  Then $S$ is nonempty and bounded, since $\epsilon \in S$ and $t_0 \le A$ for all $t_0 \in S$.   Therefore $S$ has a finite supremum, which we denote by $t_m$.  Equations \eqref{x2x3deriv} imply that $\frac{dx_3}{dt}\le 0$ and $\frac{dx_2}{dt} \le 0$ for all $t \in [\gamma,t_m)$, so $x_{2}(t)$ and $x_3(t)$ have limits as $t$ approaches $t_m$ from the left; we denote these limits by $x_2(t_m)$ and $x_3(t_m)$ respectively. 

We have that $0 \le x_2(t_m) \le t_m \le x_3(t_m)$.  It cannot be the case that $0 < x_2(t_m) < t_m < x_3(t_m)$, for then an application of the implicit function theorem would allow us to extend $x_2(t)$ and $x_3(t)$ to an open interval containing $t=t_m$, contradicting the maximality of $t_m$.  Since $x_2(t)$ is nonincreasing on $[\gamma,t_m)$ and $x_2(\gamma)=\gamma$,  we have $x_2(t_m) < t_m$. Also, we cannot have  $0 < x_2(t_m) < t_m = x_3(t_m)$,  for then setting $\tilde \gamma =t_m= x_3(t_m)$ and $\tilde \delta =x_2(t_m)$ would produce a solution $(y_1,y_2)=(\tilde \gamma, \tilde \delta)$ of \eqref{delgamequation} with $0 < \tilde \delta < \tilde \gamma$, which is distinct from $(\gamma,\delta)$ and therefore contradicts the uniqueness of positive solutions to \eqref{delgamequation}.   Finally, if $0 < x_2(t_m)=t_m=x_3(t_m)$, we would obtain a point in $\Gamma \cap \Omega$ where $x_1=x_2=x_3$, which we have already seen is impossible.    We have thus ruled out all the possibilities in which $x_2(t_m)>0$, so it follows that $x_2(t_m)=0$. 

 We have shown that $\Gamma \cap \Omega$ contains the smooth arc
$$
\left\{(t, x_2(t), x_3(t)): \gamma \le t \le t_m\right\}
$$
whose endpoints are $Q$ and $P_1=(t_m,0,x_3(t_m))\in \partial\Omega$.  By symmetry, $\Gamma \cap \Omega$ also contains a smooth arc whose endpoints are $Q$ and $P_2=(0,t_m,x_3(t_m)) \in \partial\Omega$.  The interior of the union of these two arcs is a connected component $\Gamma_1$ of $\Gamma \cap \Omega$.

 We now consider the problem of maximizing or minimizing $f(x)$ subject to the constraint $x \in \overline \Gamma_1$.  If the maximum or minimum occurs at an interior point $x=(x_1,x_2,x_3) \in \Gamma_1$, then $x$ must be a critical point of the constrained variational problem, in the sense that 
 $$
 \nabla f(x)= \lambda_1 \nabla g_1(x) + \lambda_2 \nabla g_2(x)
 $$
 for some $\lambda_1, \lambda_2 \in \mathbb R$.  This implies that
 $$
 7x_i^6=3\lambda_1 x_i^2 + 5 \lambda_2 x_i^4
 $$
 for $i=1,2,3$.  Letting $z_i = x_i^2$, we have for all $i,j \in \left\{1,2,3\right\}$ that
 $$
 \frac{5\lambda_2}{7}(z_i-z_j)=z_i^2 - z_j^2.
 $$
 Therefore either $z_i = z_j$ or $z_i+z+j=5\lambda_2/7$.   It follows that the set $\left\{z_1,z_2,z_3\right\}$ cannot consist of three distinct numbers: if, for example, $z_1 \ne z_3$ and $z_2 \ne z_3$, then we must have that
 $z_1+z_3=z_2+z_3 = 5 \lambda_2/7$, so $z_1=z_2$.  It follows then from Lemma \ref{delgamlemma} that the only possible critical points of $f$ on $\Gamma \cap \Omega$ are $Q=(\gamma, \gamma,\delta)$,
 $(\gamma, \delta,\gamma)$, and $(\gamma, \delta,\gamma)$.  But since $\delta > \gamma$, and $x_3 > x_2$ at all points on $\Gamma_1$ except $Q$, we conclude that $Q$ is the only critical point of $f$ on $\Gamma_1$.

 We have now shown that either $f$ takes its maximum value over $\overline \Gamma_1$ at $Q$ and its minimum value at $P_1$ and $P_2$, or $f$ takes its minimum value over $\overline \Gamma_1$ at $Q$ and its maximum value at $P_1$ and $P_2$.   To decide between these two alternatives, it suffices to determine whether the restriction of $f$ to $\Gamma_1$ has a local maximum or a local minimum at $Q$.  For this purpose, we use the second derivative test for constrained extrema, as expounded for example in \cite{Mu}.

 Consider the Lagrangian $L(x)$ defined by
 $$
 L(x)=f(x)-\lambda_1(g_1(x)-A^3)-\lambda_2(g_2(x)-B^5),
 $$
 and form
 the ``augmented Hessian'', a $5 \times 5$ matrix $\mathbf H$ defined by
 $$
\mathbf H=\left[
 \begin{matrix}
 \mathbf 0 & \mathbf B\\
 \mathbf C & \mathbf D
 \end{matrix}
 \right],
 $$
 where $\mathbf B$ is the $2 \times 3$ matrix given by
 $$
 \mathbf B = \left[
 \begin{matrix}
  -(g_1)_{x_1} & -(g_1)_{x_2} & -(g_1)_{x_3} \\
  -(g_2)_{x_1} & -(g_2)_{x_2} & -(g_2)_{x_3}
 \end{matrix}
 \right],
 $$
 $\mathbf C=\mathbf B^T$ is the transpose of $\mathbf B$, and $\mathbf D$ is the $3 \times 3$ Hessian of $L$, given by
 $$
\mathbf D_{ij}=L_{x_i x_j}\quad \text{for $i,j \in \left\{1,2,3\right\}$}.
 $$
 Here and in what follows we use $\mathbf 0$ to denote matrices of various sizes (in this case, a $2 \times 2$ matrix) with all zero entries. 

 We want to compute the determinant $\det \mathbf H$ of $\mathbf H$ at $x=Q$. Calculations show that at $x=Q$, we have
 $$
 \mathbf B=\left[
 \begin{matrix}
  -3\gamma^2 & -3\gamma^2 & -3\delta^2 \\
    -5\gamma^4 & -5\gamma^4 & -5\delta^4
 \end{matrix}
 \right]
 $$
 and
 $$\mathbf D=\left[
 \begin{matrix}
  -14\gamma^3(\gamma^2+\delta^2) & 0 & 0 \\
    0 &-14\gamma^3(\gamma^2+\delta^2) & 0\\
    0 & 0 & -14\delta^3(\gamma^2+\delta^2)\\
 \end{matrix}
 \right];
 $$
from which one finds that
$$\det (\mathbf 0-\mathbf B \mathbf D^{-1}\mathbf C) = \frac{-450\gamma\delta}{196}$$
and
$$\det \mathbf D=-14^3(\gamma^2-\delta^2)^3 \delta^3\gamma^6.
$$
Let $\mathbf I_2$ be the $2\times 2$ identity matrix, and $\mathbf I_3$ the $3 \times 3$ identity matrix.   Then the matrix
$$
\left[\begin{matrix}\mathbf I_2 & \mathbf 0\\-\mathbf D^{-1}\mathbf C & \mathbf I_3\end{matrix}\right]
$$
has determinant equal to one, and so we can write
\begin{equation*}
\begin{aligned}
\det \mathbf H &= \det\left[\begin{matrix}
 \mathbf 0 &\mathbf B\\
 \mathbf C & \mathbf D\end{matrix}\right]\left[\begin{matrix}\mathbf I_2 & \mathbf 0\\-\mathbf D^{-1}\mathbf C & \mathbf I_3\end{matrix}\right]
 =\det \left[\begin{matrix}\mathbf A-\mathbf B\mathbf D^{-1}\mathbf C & \mathbf B\\ \mathbf 0 & \mathbf D\end{matrix}\right]=\\
 &=\det(\mathbf A-\mathbf B\mathbf D^{-1}\mathbf C)\det\mathbf  D
 =14\cdot 450 \gamma^7\delta^4(\gamma^2-\delta^2)^3.
 \end{aligned}
 \end{equation*}

 Since $\gamma < \delta$, we have shown that $\det \mathbf H < 0$ at $x=Q$.  It is easy to check that $\mathbf B$ has full rank at $x=Q$.  Therefore, according to Theorem 36 on p.\ 58 of \cite{Mu}, we have that $\mathbf v^T \mathbf D \mathbf v < 0$ for all nonzero column vectors $\mathbf v \in \mathbb R^3$ satisfying $\mathbf B\mathbf v = \mathbf 0$.  In other words, the Hessian $\mathbf D$ of $L$ is negative definite in all directions $\mathbf v$ which are tangent to both the surfaces $\{x: g_1(x)=A^3\}$ and $\{x: g_2(x)=B^5\}$ at $Q$.  From a classical result in the calculus of variations (see for example page 334 of \cite{LY}), it follows that $f(x)$ has a local maximum at $Q$ subject to the restriction $x \in \Gamma_1$.

We have now proved that the restriction of $f$ takes its maximum over $\overline \Gamma_1$ at $Q=(\gamma,\gamma,\delta) \in \Gamma_1$ and its minimum value at the endpoints $P_1=(t_m,0,x_3(t_m))$ and $P_2=(0,t_m,x_3(t_m))$ of $\overline \Gamma_1$.  Let us define $f_{\text{max}} = f(Q)$ and $f_{\text{min}}=f(P_1)=f(P_2)$.   Since the restriction of $f$ to $\Gamma_1$ has no critical points in
$\Gamma_1 \backslash Q$, we must have $f(x) > f_{\text{min}}$ for all  $x \in \Gamma_1$.

By symmetry, it follows that $\Gamma \cap \Omega$ also contains a component $\Gamma_2$ which includes the point $(\gamma,\delta,\gamma)$ and whose closure has endpoints $(0,x_3(t_m),t_m)$ and $(t_m,x_3(t_m),0)$; and a component $\Gamma_3$ which includes the point $(\delta,\gamma,\gamma)$ and whose closure has endpoints $(x_3(t_m),0,t_m)$ and $(x_3(t_m), t_m,0)$.  Furthermore, we know that the maximum value of $f$ on $\overline \Gamma_2$ is attained at $(\gamma,\delta,\gamma)$, and is equal to $f_{\text{max}}$; the minimum value of $f$ on $\overline \Gamma_2$ is attained at the  boundary points of $\overline \Gamma_2$, and is equal to $f_{\text{min}}$; and $f(x) > f_{\text{min}}$ for all $x \in \Gamma_2$.  Similar statements hold for $\overline \Gamma_3$.

To complete the proof of the Lemma, it remains only to show that $\Gamma \cap \Omega$ contains no other components besides $\Gamma_1$, $\Gamma_2$, and $\Gamma_3$.   To prove this, assume $Q_0=(x_{10},x_{20},x_{30}) \in \Gamma \cap \Omega$; we wish to show that $Q_0 \in \Gamma_i$ for some $i \in \left\{1,2,3\right\}$.  We know $x_{10}$, $x_{20}$, and $x_{30}$ cannot all be equal (for this would imply $B/A=(1/3)^{2/15}$); and if any two of $x_{10}$, $x_{20}$, $x_{30}$ are equal, then by Lemma \ref{delgamlemma}, $Q_0$ must be one of the points $(\gamma,\gamma,\delta)$, $(\gamma,\delta,\gamma)$, or $(\delta,\gamma,\gamma)$, and therefore lies in one of the $\Gamma_i$.  We may therefore  assume without loss of generality that $x_{10}<x_{20} < x_{30}$.  
Then the analysis above shows that there exists some $\epsilon > 0$ and a smooth curve $x(t)=(t, x_2(t),x_3(t))$ mapping $I=(x_{10}-\epsilon,x_{10}+\epsilon)$ into $\Gamma \cap \Omega$, such that $x(0)=Q_0$,  and satisfying  
$t< x_2(t) < x_3(t)$ on $I$.  Moreover, $x(t)$ satisfies equations \eqref{x2x3deriv}, which imply that $\frac{dx_2}{dt} <0$ and $\frac{dx_3}{dt} > 0$ on $I$.  

Now  let $S$ be the set of all $t_0 > x_{10}$ such that there exist smooth functions $x_2(t)$, $x_3(t)$ defined for all $t \in (x_{10},t_0)$ such that $(t,x_2(t),x_3(t)) \in \Gamma \cap \Omega$ and 
$$
 0 < t < x_2(t) < x_3(t) 
$$ 
for all $t \in (x_{10},t_0)$.   Again we define $t_m = \sup S$ and let $x_2(t_m)$ denote the limit of $x_2(t)$ as $t$ approaches $t_m$ from the left.   The implicit function theorem and the maximality of $t_m$ imply that we must have $x_2(t_m)=t_m$.   But this then implies that the point $(t_m,x_2(t_m),x_3(t_m))=(\gamma,\gamma,\delta) \in \Gamma_1$.    Therefore the set $S_1 = \left\{t \in [x_{01},t_m]:  (t,x_2(t), x_3(t)) \in \Gamma_1\right\}$ is non-empty.  The uniqueness assertion in the implicit function theorem tells us that for every $t \in [x_{01}, t_m]$, the equations $g_1(x)=A^3$ and $g_2(x)=B^5$ determine $x_2$ and $x_3$ uniquely as functions of $x_1$ in some open neighborhood of $(t,x_2(t),x_3(t))$.  Therefore $S_1$ is open.  On the other hand,  $S_1$ is clearly closed, by the continuity of $x_2(t)$ and $x_3(t)$ and the fact that $\Gamma_1$ is a closed subset of $\mathbb R^3$.  So we must have  $S=[x_{01},t_m]$, and therefore $Q_0 \in \Gamma_1$. \end{proof}

{\it Remark:}  In the case $(1/3)^{2/15}<B/A<(1/2)^{2/15}$, a similar analysis shows that $\Gamma \cap \Omega$ is homeomorphic to a circle, and contains all six of the points $P_1(\gamma_1,\gamma_1,\delta_1)$, $P_2(\gamma_1,\delta_1,\gamma_1)$, $P_3(\delta_1,\gamma_1,\delta_1)$, $P_4(\gamma_2,\gamma_2,\delta_2)$, $P_5(\gamma_2,\delta_2,\gamma_2)$, and $P_6(\delta_2,\gamma_2,\delta_2)$, where $(\gamma_1,\delta_1)$ and $(\gamma_2,\delta_2)$ are as described in part 4 of Lemma \ref{delgamlemma}.  Moreover, points $P_1$, $P_2$, and $P_3$ are local maxima for the restriction of $f$ to $\Gamma \cap \Omega$; while points $P_4$, $P_5$, and $P_6$ are local minima.   However, we will not need these facts in what follows.

\bigskip

\begin{lem}
Suppose $x_1,\dots, x_n$ are numbers such that $x_1 \ge x_2 \ge \dots \ge x_n \ge 0$, with $x_1>0$, and for each $m \in \{1,\dots, n\}$ define
\begin{equation*}
\begin{aligned}
A_m &=\left( \sum_{i=1}^m x_i^3\right)^{1/3}\\
B_m &= \left( \sum_{i=1}^m x_i^5\right)^{1/5}.
\end{aligned}
\end{equation*}
Then for each $m \in \{2,\dots,n\}$,  
\begin{equation}
\frac{B_{m-1}}{A_{m-1}} \ge \frac{B_m}{A_m},
\label{BoverAdecreases}
\end{equation}
and the inequality is strict if $x_m > 0$.
\label{BoverAlem}
\end{lem}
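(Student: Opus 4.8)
The plan is to reduce the claimed monotonicity to a one‑variable monotonicity statement and then verify it by a short derivative computation, the one genuine ingredient being a weighted‑average bound that uses the ordering $x_1 \ge \dots \ge x_n$. Since all quantities are nonnegative, the inequality \eqref{BoverAdecreases} is equivalent to its fifteenth power, so writing
$$
P_m := A_m^3 = \sum_{i=1}^m x_i^3, \qquad Q_m := B_m^5 = \sum_{i=1}^m x_i^5,
$$
I would show instead that $Q_{m-1}^3/P_{m-1}^5 \ge Q_m^3/P_m^5$. Note that because $x_1 > 0$ and $m \ge 2$, we have $P_{m-1} \ge x_1^3 > 0$ and $Q_{m-1} > 0$, so all denominators are positive.

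The key observation is that $x_m^2 \le Q_{m-1}/P_{m-1}$. Indeed,
$$
\frac{Q_{m-1}}{P_{m-1}} = \frac{\sum_{i=1}^{m-1} x_i^2\, x_i^3}{\sum_{i=1}^{m-1} x_i^3}
$$
is a weighted average of the numbers $x_i^2$ (with weights $x_i^3$) over $i = 1, \dots, m-1$; since the sequence is decreasing, each such $x_i \ge x_m$, so every $x_i^2 \ge x_m^2$ and hence the average is $\ge x_m^2$. This gives $3 x_m^2 P_{m-1} \le 3 Q_{m-1} < 5 Q_{m-1}$.

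For the monotonicity step I would set $c = x_m^2 \ge 0$ and $s = x_m^3 \ge 0$, so that $P_m = P_{m-1} + s$ and, using $x_m^5 = x_m^2 x_m^3 = cs$, also $Q_m = Q_{m-1} + cs$. I then consider the function $f(s) = (Q_{m-1} + cs)^3/(P_{m-1}+s)^5$ for $s \ge 0$. A direct computation shows that the sign of $f'(s)$ agrees with the sign of
$$
3c(P_{m-1}+s) - 5(Q_{m-1}+cs) = 3c\,P_{m-1} - 5Q_{m-1} - 2cs.
$$
At $s=0$ this equals $3c P_{m-1} - 5 Q_{m-1} \le -2Q_{m-1} < 0$ by the bound above, and it is nonincreasing in $s$ since $c \ge 0$; hence it stays strictly negative for all $s \ge 0$. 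Therefore $f$ is strictly decreasing on $[0,\infty)$, so $f(s) \le f(0)$ with equality precisely when $s = 0$. Evaluating at $s = x_m^3$ yields $Q_m^3/P_m^5 = f(x_m^3) \le Q_{m-1}^3/P_{m-1}^5$, with strict inequality exactly when $x_m > 0$, which is the desired conclusion.

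There is no serious obstacle here: the only real idea is the weighted‑average bound $x_m^2 \le Q_{m-1}/P_{m-1}$, which is where the hypothesis that $x_m$ is the smallest of $x_1,\dots,x_m$ enters, and everything else is an elementary single‑variable monotonicity argument. The points requiring a little care are the positivity of $P_{m-1}$ and $Q_{m-1}$ (guaranteed by $x_1 > 0$) and the bookkeeping of the strict‑versus‑nonstrict case, both of which fall out of the computation above.
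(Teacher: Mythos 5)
Your proof is correct and is essentially the paper's argument: both reduce the claim to comparing fifteenth powers and hinge on a one-variable derivative-sign computation whose key input is the same ordering bound (in your notation $x_m^2 P_{m-1} \le Q_{m-1}$; in the paper's, $x_m^2 A_m^3 \le B_m^5$). The only difference is the parametrization — the paper deletes $(x^3,x^5)$ from the $m$-term sums via $f(x)=(B_m^5-x^5)^3/(A_m^3-x^3)^5$ and checks that $f$ increases on $[0,\sqrt{B_m^5/A_m^3})$, while you add the last term to the $(m-1)$-term sums with the coefficient $c=x_m^2$ frozen, which linearizes the coupling and makes your $f$ strictly decreasing on all of $[0,\infty)$.
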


\begin{proof} The statement is obvious if $x_m =0$, so we may assume $x_m > 0$. Let $\displaystyle f(x)= \frac{(B_m^5-x^5)^3}{(A_m^3-x^3)^5}$.  Then $f(x_m)=(B_{m-1}/A_{m-1})^{15}$ and $f(0)=(B_m/A_m)^{15}$, so it suffices to show that $f(x_m) > f(0)$.   Now
$$
f'(x) = \frac{15  x^2(B_m^5-A_m^3x^2)(B_m^5-x^5)^2}{(A_m^3-x^3)^6}.
$$
So $f'(x) > 0$ for $0 \le x < x_0$, where $x_0=\sqrt{B_m^5/A_m^3}$.  But since $x_m \le x_i$ for all $i \in \{1,\dots,m\}$, we have
$$
A_m^3 x_m^2 = \left(\sum_{i=1}^m x_i^3\right)x_m^2 \le \sum_{i=1}^m x_i^5 = B_m^5,
$$
so $x_m \le x_0$.  Therefore $f(x_n) > f(0)$, as desired.
\end{proof}

\begin{lem}
Let $A, B>0$ be such that $(1/2)^{2/15} \le B/A \le 1$, and let $n \in \mathbb N$, with $n \ge 3$.  Suppose $x_1,\dots,x_n$ are numbers such that
$x_1 \ge \dots \ge x_n \ge 0$, with $x_3 > 0$, and
\begin{equation}
\begin{aligned}
\sum_{i=1}^n x_i^3 &= A^3\\
\sum_{i=1}^n x_i^5 &= B^5.
\end{aligned}
\label{xisyst}
\end{equation}
Then
\begin{equation}
\sum_{i=1}^n x_i^7 \ge m(A,B) + E,
\label{xiineq}
\end{equation}
where $E=E(x_1,x_2,x_3)$ is defined by
\begin{equation}
E(x_1,x_2,x_3) :=x_1^7 + x_2^7+x_3^7 - m((x_1^3 + x_2^3 + x_3^3)^{1/3},(x_1^5 + x_2^5+x_3^5)^{1/5}).
\label{defE}
\end{equation}
In particular, 
\begin{equation}
E(x_1,x_2,x_3)>0.
\label{Epos}
\end{equation}
\label{strictxn7}
\end{lem}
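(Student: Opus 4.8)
The plan is to treat the two-term function $m$ from Definition \ref{defm} as the carrier of a telescoping reduction: I will peel off the numbers $x_n,x_{n-1},\dots,x_4$ one at a time, each time paying a cost of exactly $x_i^7$, reducing everything to the three-term configuration, and then read off the strict positivity \eqref{Epos} directly from the last assertion of Lemma \ref{3ge2}. The first task is to check that $(A_m,B_m)\in T$ for every $m$ with $3\le m\le n$, so that $m(A_m,B_m)$ is defined. By Lemma \ref{BoverAlem} the ratio $B_m/A_m$ is nonincreasing in $m$, whence $B_m/A_m\ge B_n/A_n=B/A\ge(1/2)^{2/15}$; and Lemma \ref{ABexistlem} gives $B_m/A_m\le 1$. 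In fact $B_m/A_m<1$ for $m\ge 3$, since the computation in the proof of Lemma \ref{ABexistlem} shows that $B_m/A_m=1$ would force all but one of $x_1,\dots,x_m$ to vanish, whereas $x_1,x_2,x_3>0$. Thus $(1/2)^{2/15}\le B_m/A_m<1$ for all $3\le m\le n$.

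Next I would record that whenever Lemma \ref{3ge2} is applied with constraint values $(A',B')$ obeying $(1/2)^{2/15}\le B'/A'<1$, its quantity $f_{\text{min}}$ equals $m(A',B')$. Indeed, each boundary point of a component $\Gamma_i$, such as $P_1=(t_m,0,x_3(t_m))$, has one vanishing coordinate, so its two nonzero coordinates solve the two-term system \eqref{alphbetequation} for $(A',B')$; by Lemma \ref{delgamlemma} every such solution yields the same value $y_1^7+y_2^7=m(A',B')$. With this observation the inequality \eqref{Epos} is immediate: the point $(x_1,x_2,x_3)$ lies in the open region $\Gamma\cap\Omega$ associated to $(A_3,B_3)$ because $x_1,x_2,x_3>0$, so the final sentence of Lemma \ref{3ge2} gives $x_1^7+x_2^7+x_3^7>f_{\text{min}}=m(A_3,B_3)$, which is exactly $E(x_1,x_2,x_3)>0$.

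The main inequality \eqref{xiineq} will follow from the one-step estimate
\begin{equation*}
m(A_{m+1},B_{m+1})\le m(A_m,B_m)+x_{m+1}^7,\qquad 3\le m\le n-1.
\end{equation*}
To prove it, let $(y_1,y_2)$ be the two-term minimizer for $(A_m,B_m)$ from Definition \ref{defm}, so that $y_1^3+y_2^3=A_m^3$, $y_1^5+y_2^5=B_m^5$, and $y_1^7+y_2^7=m(A_m,B_m)$; since $B_m/A_m<1$, Lemma \ref{delgamlemma} forces $y_1,y_2>0$. The triple $(y_1,y_2,x_{m+1})$ then satisfies the constraints for $(A_{m+1},B_{m+1})$, because $y_1^3+y_2^3+x_{m+1}^3=A_{m+1}^3$ and $y_1^5+y_2^5+x_{m+1}^5=B_{m+1}^5$. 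If $x_{m+1}>0$ this triple lies in $\Gamma\cap\Omega$, so Lemma \ref{3ge2} gives $y_1^7+y_2^7+x_{m+1}^7>f_{\text{min}}=m(A_{m+1},B_{m+1})$; if $x_{m+1}=0$ then $(A_{m+1},B_{m+1})=(A_m,B_m)$ and equality holds. Either way the one-step estimate holds. Summing it over $m=3,\dots,n-1$ telescopes to $m(A,B)-m(A_3,B_3)\le\sum_{i=4}^n x_i^7$, and adding $x_1^7+x_2^7+x_3^7$ to both sides gives precisely $\sum_{i=1}^n x_i^7\ge m(A,B)+E$, as desired.

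The step I expect to require the most care is the repeated verification that the hypotheses of Lemma \ref{3ge2} hold at each stage, namely that $(1/2)^{2/15}\le B_{m+1}/A_{m+1}<1$, so that the lemma indeed produces three components with $f_{\text{min}}=m(A_{m+1},B_{m+1})$, together with the observation that the two-term minimizer $(y_1,y_2)$ has both coordinates strictly positive; this strict positivity is what places the intermediate triple in the open region $\Omega$ whenever $x_{m+1}>0$, and hence lets the strict inequality of Lemma \ref{3ge2} apply.
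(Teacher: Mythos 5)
Your proposal is correct, and its engine is the same as the paper's: the one-step comparison is obtained by feeding Lemma \ref{3ge2} the triple made of a two-term minimizer together with one extra coordinate, after checking that the ratio hypotheses hold and that $f_{\text{min}}$ coincides with the corresponding value of $m$ (via Lemma \ref{delgamlemma}). The difference is organizational. The paper argues by induction on $n$: it strips off the smallest entry $x_n$, rescales the remaining entries by $x_n$ so that the inductive hypothesis applies, and then needs the homogeneity property \eqref{fhomog} of $m$ (Lemma \ref{mcont}) to convert the rescaled error term back into $E$; its key inequality $x_n^7 + x_n^7\, m(A_{n-1},B_{n-1}) \ge m(A,B)$, stated there for the rescaled quantities $A_{n-1}=(A^3-x_n^3)^{1/3}/x_n$ and $B_{n-1}=(B^5-x_n^5)^{1/5}/x_n$, is exactly your one-step estimate $m(A_{m+1},B_{m+1}) \le m(A_m,B_m) + x_{m+1}^7$ in disguise. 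Your telescoping sum is that induction unrolled: by working with the unnormalized partial sums $(A_m,B_m)$ throughout, you dispense with the rescaling and with Lemma \ref{mcont} altogether, at the price of verifying $(1/2)^{2/15} \le B_m/A_m < 1$ at every stage, which you do correctly using the monotonicity from Lemma \ref{BoverAlem} and the equality analysis in Lemma \ref{ABexistlem}. The derivation of \eqref{Epos} is identical in the two arguments; if anything, you are more explicit than the paper in justifying that Lemma \ref{3ge2} applies at $(A_3,B_3)$ and that its $f_{\text{min}}$ equals $m(A_3,B_3)$, points the paper leaves implicit until its induction step.
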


\begin{proof}  Let $\tilde A = (x_1^3 + x_2^3 + x_3^3)^{1/3}$ and $\tilde B = (x_1^5 + x_2^5+x_3^5)^{1/5}$. If we define $\Gamma$ and $\Omega$ as in \eqref{defgamomega} with $A$ replaced by $\tilde A$ and $B$ replaced by $\tilde B$, then since $x_1 \ge x_2 \ge x_3 > 0$, the point $x=(x_1,x_2,x_3)$ lies in $\Gamma \cap \Omega$.   The inequality \eqref{Epos} thus follows from Lemma \ref{3ge2}.

To prove \eqref{xiineq}, we use induction on $n$.  When $n=3$, the result is trivial.   Suppose $n \ge 4$ and assume the statement of the lemma is true for $n-1$; we wish
to prove it for $n$.

Suppose that $x_1 \ge \dots \ge x_n \ge 0$, with $x_3>0$, and that \eqref{xisyst} holds.  If $x_n =0$, then we are done by the inductive hypothesis, so we may assume $x_n > 0$.  Let 
\begin{equation*}
\begin{aligned}
A_{n-1} &=\frac{(A^3-x_n^3)^{1/3}}{x_n}\\
B_{n-1} &=\frac{(B^5-x_n^5)^{1/5}}{x_n},\\
\end{aligned}
\end{equation*}
and define $y_i = x_i/x_n$ for $1 \le i \le n-1$.  Then $y_1 \ge \dots \ge y_{n-1}$, and
\begin{equation*}
\begin{aligned}
\sum_{i=1}^{n-1} y_i^3 &= A_{n-1}^3\\
\sum_{i=1}^{n-1} y_i^5 &= B_{n-1}^5.
\end{aligned}
\end{equation*}
From Lemma \ref{BoverAlem} it follows that $B_{n-1}/A_{n-1} > B/A$, and from Lemma \ref{ABexistlem} we have that $B_{n-1}/A_{n-1} \le 1$.    Hence $(1/2)^{2/15} \le B_{n-1}/A_{n-1} \le 1$, and we may therefore apply the inductive hypothesis to the numbers $y_1 \ge y_2 \ge \dots \ge y_{n-1} \ge 0$.  There results the inequality 
\begin{equation}
\sum_{i=1}^{n-1} y_i^7 \ge m(A_{n-1},B_{n-1}) + E_1,
\label{sumyi7}
\end{equation}
where
\begin{equation*}
E_1=y_1^7 + y_2^7+y_3^7 - m((y_1^3 + y_2^3 + y_3^3)^{1/3},(y_1^5 + y_2^5+y_3^5)^{1/5}).
\end{equation*}
From \eqref{fhomog}, however, it follows that $E_1 = E/x_n^7$, so multiplying \eqref{sumyi7} by $x_n^7$, we conclude that
\begin{equation}
\sum_{i=1}^{n-1}x_i^7 \ge x_n^7 m(A_{n-1},B_{n-1}) + E.
\label{nm1est}
\end{equation} 

 From Lemma \ref{delgamlemma} we have that there exist $w_1$, $w_2$ with $0 \le w_1 < w_2$ such that
\begin{equation*}
\begin{aligned}
w_1^3 + w_2^3 &= A_{n-1}^3\\
w_1^5 + w_2^5 &= B_{n-1}^5.
\end{aligned}
\end{equation*}
By definition of the function $m$, we have
\begin{equation}
w_1^7+w_2^7 = m(A_{n-1},B_{n-1}).
\label{fw}
\end{equation}
Letting $z_1 = x_n w_1$, $z_2 = x_n w_2$, and $z_3 = x_n$, we see that
\begin{equation*}
\begin{aligned}
z_1^3 + z_2^3 + z_3^3 &= A^3\\
z_1^5 + z_2^5 + z_3^5 &= B^5.
\end{aligned}
\end{equation*}
Therefore $(z_1,z_2,z_3)$ is in the closure of the set $\Gamma \cap \Omega$ defined in Lemma \ref{3ge2}.  From Lemma \ref{delgamlemma} we see that the boundary of $\Gamma \cap \Omega$ consists
exactly of the six points $(0,\alpha,\beta)$, $(0,\beta,\alpha)$, $(\alpha,0,\beta)$, $(\beta,0,\alpha)$, $(\alpha,\beta,0)$, and $(\beta,\alpha,0)$.  At each of these boundary points, the function $f$ defined
in Lemma \ref{3ge2} takes the same value $\alpha^7+\beta^7$, which by definition is equal to $m(A,B)$.  Hence, by Lemma \ref{3ge2}, we have that
$$
f(z_1,z_2,z_3) \ge m(A,B).
$$
Since $f(z_1,z_2,z_3)=x_n^7(1+w_1^7+w_2^7)$, using \eqref{fw} we deduce that
$$
x_n^7 + x_n^7 m(A_{n-1},B_{n-1}) \ge m(A,B).
$$
Therefore, by \eqref{nm1est},
$$
\sum_{i=1}^n x_i^7 =x_n^7 + \sum_{i=1}^{n-1}x_i^7  \ge x_n^7 + x_n^7 m(A_{n-1},B_{n-1}) + E \ge m(A,B) + E,
$$
as was desired.
\end{proof}

\begin{lem}  Suppose $y_1 \ge y_2 \ge 0$ and $y_1 > 0$.  Let $n \in \mathbb N$, and suppose  $x_1,\dots,x_n$ are numbers such that
$x_1 \ge \dots \ge x_n \ge 0$, and
\begin{equation}
\begin{aligned}
\sum_{i=1}^n x_i^3 &\le y_1^3+y_2^3\\
\sum_{i=1}^n x_i^5 &\ge y_1^5+y_2^5.
\end{aligned}
\label{xisystineq}
\end{equation}

\begin{enumerate}

\item

If $n \ge 2$ and $y_2=0$, then $x_2 =0$ and $x_1 = y_1$.

\item

If $n \ge 3$ and  $x_3 > 0$, then 
\begin{equation}
\sum_{i=1}^n x_i^7 \ge y_1^7 + y_2^7 +E(x_1,x_2,x_3),
\label{xiyi7}
\end{equation}   
where $E(x_1,x_2,x_3)>0$ is as in \eqref{defE} and \eqref{Epos}.
\end{enumerate}

\label{ruleoutthreepos}
 \end{lem}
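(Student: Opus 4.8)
The plan is to treat the two parts separately: part 1 by a direct elementary estimate, and part 2 by chaining Lemma \ref{strictxn7} (which reduces the $n$-term sum to the two-term value $m(A,B)$ plus a positive error) with the two-term comparison Lemma \ref{zy7lem}.

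For part 1, with $y_2 = 0$ the hypotheses \eqref{xisystineq} become $\sum_i x_i^3 \le y_1^3$ and $\sum_i x_i^5 \ge y_1^5$. First I would note that $x_1^3 \le \sum_i x_i^3 \le y_1^3$ gives $x_1 \le y_1$. For the reverse bound I would exploit $x_i \le x_1$ for every $i$ to write
$$\sum_{i=1}^n x_i^5 = \sum_{i=1}^n x_i^2 \cdot x_i^3 \le x_1^2 \sum_{i=1}^n x_i^3 \le x_1^2 y_1^3,$$
so that the fifth-power constraint yields $y_1^5 \le x_1^2 y_1^3$, whence $y_1 \le x_1$ and therefore $x_1 = y_1$. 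Substituting $x_1 = y_1$ back into $\sum_i x_i^3 \le y_1^3$ forces $\sum_{i \ge 2} x_i^3 \le 0$, so $x_i = 0$ for all $i \ge 2$, and in particular $x_2 = 0$.

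For part 2, set $A = \left(\sum_{i=1}^n x_i^3\right)^{1/3}$ and $B = \left(\sum_{i=1}^n x_i^5\right)^{1/5}$, so that the $x_i$ solve the system \eqref{xisyst} of Lemma \ref{strictxn7} exactly. The one step requiring genuine care is verifying the admissibility range $(1/2)^{2/15} \le B/A \le 1$ needed to invoke that lemma and the definition of $m(A,B)$. The upper bound $B/A \le 1$ follows from Lemma \ref{ABexistlem} applied to this $n$-term solution. For the lower bound I would compare against the two-term data: writing $\tilde A = (y_1^3 + y_2^3)^{1/3}$ and $\tilde B = (y_1^5 + y_2^5)^{1/5}$, Lemma \ref{ABexistlem} applied to the pair $(y_1,y_2)$ gives $\tilde B / \tilde A \ge (1/2)^{2/15}$, while the two inequalities of \eqref{xisystineq} give $A^3 \le \tilde A^3$ and $B^5 \ge \tilde B^5$, hence $A \le \tilde A$ and $B \ge \tilde B$; combining, $B/A \ge \tilde B / \tilde A \ge (1/2)^{2/15}$. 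This is precisely the step where both constraints in \eqref{xisystineq} are used simultaneously, and it is the main (though modest) obstacle, since all the hard analysis has already been absorbed into the earlier lemmas.

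With $(1/2)^{2/15} \le B/A \le 1$ in hand and $x_3 > 0$, Lemma \ref{strictxn7} gives $\sum_{i=1}^n x_i^7 \ge m(A,B) + E(x_1,x_2,x_3)$, with $E(x_1,x_2,x_3) > 0$ by \eqref{Epos}. It then remains to show $m(A,B) \ge y_1^7 + y_2^7$. By Definition \ref{defm}, $m(A,B) = z_1^7 + z_2^7$, where $(z_1,z_2)$, reordered so that $z_1 \ge z_2 \ge 0$, satisfies $z_1^3 + z_2^3 = A^3$ and $z_1^5 + z_2^5 = B^5$. Since $z_1^3 + z_2^3 = A^3 \le y_1^3 + y_2^3$ and $z_1^5 + z_2^5 = B^5 \ge y_1^5 + y_2^5$, Lemma \ref{zy7lem} applies directly and yields $z_1^7 + z_2^7 \ge y_1^7 + y_2^7$, i.e.\ $m(A,B) \ge y_1^7 + y_2^7$. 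Chaining this with the previous inequality produces \eqref{xiyi7}. (One may also observe that the case $y_2 = 0$ is vacuous here: by the argument of part 1 it would force $x_3 = 0$, contradicting the hypothesis $x_3 > 0$, so the lower-bound argument above is never degenerate.)
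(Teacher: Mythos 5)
Your proof is correct, and for part 2 it follows essentially the paper's own route: both arguments pass from the $n$-term quantities $A_n=\left(\sum_{i=1}^n x_i^3\right)^{1/3}$, $B_n=\left(\sum_{i=1}^n x_i^5\right)^{1/5}$ to Lemma \ref{strictxn7}, identify $m(A_n,B_n)=z_1^7+z_2^7$ for the two-term solution $(z_1,z_2)$ furnished by Lemma \ref{delgamlemma} and Definition \ref{defm}, and then compare $(z_1,z_2)$ with $(y_1,y_2)$ via Lemma \ref{zy7lem}. The differences are in the bookkeeping and in part 1. For the admissibility range, the paper assembles the single chain $1 = B_1/A_1 \ge B_2/A_2 \ge \cdots \ge B_n/A_n \ge B/A \ge (1/2)^{2/15}$, getting the upper end from the monotonicity Lemma \ref{BoverAlem}; you instead obtain $B_n/A_n \le 1$ directly from Lemma \ref{ABexistlem} applied to the $x_i$, and handle the lower end exactly as the paper does (comparison with the two-term ratio via \eqref{xisystineq}, then Lemma \ref{ABexistlem} with $k=2$), so this is only a cosmetic repackaging. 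The genuine divergence is part 1: the paper reads it off the same chain, since $y_2=0$ forces the two-term ratio to equal $1$ while $x_2>0$ would make $B_1/A_1 > B_2/A_2$ strict by Lemma \ref{BoverAlem}, an immediate contradiction; you avoid Lemma \ref{BoverAlem} entirely with the elementary estimate $\sum_{i=1}^n x_i^5 \le x_1^2 \sum_{i=1}^n x_i^3 \le x_1^2 y_1^3$, which pins down $x_1=y_1$ and then forces the remaining $x_i$ to vanish. Your version of part 1 is more self-contained, needing nothing beyond the hypotheses; the paper's version buys economy, since the chain of ratios is machinery it sets up anyway and reuses both here and in the infinite-sequence analogue, Lemma \ref{ruleoutthreeposinf}. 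Your closing observation that part 2 is vacuous when $y_2=0$ is correct but, as you note, not needed for the argument.
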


\begin{proof} Define $A=\left( y_1^3+y_2^3\right)^{1/3}$, $B=\left(y_1^5+y_2^5\right)^{1/5}$, 
$A_n =\left( \sum_{i=1}^n x_i^3\right)^{1/3}$, and $B_n= \left(\sum_{i=1}^n x_i^5\right)^{1/5}$. 
 From Lemma \ref{ABexistlem}, Lemma \ref{BoverAlem}, and \eqref{xisystineq}, we have that
\begin{equation}
 1 = \frac{B_1}{A_1} \ge \frac{B_2}{A_2} \ge \frac{B_3}{A_3} \ge \dots \ge \frac{B_n}{A_n} \ge \frac{B}{A} \ge \left(\frac12\right)^{2/15}.
\label{ladder}
\end{equation}

To prove part 1 of the Lemma, we simply observe that if $y_2=0$ then $B/A=1$, and if $x_2 > 0$ then $B_1/A_1 > B_2/A_2$
by Lemma \ref{BoverAlem}.  Thus \eqref{ladder} immediately gives a contradiction.   So if $y_2=0$, we must have $x_2=0$ and hence $x_1 = y_1$.

To prove part 2 of the Lemma, we first observe  that    from  \eqref{ladder}, Lemma \ref{delgamlemma},
 and the definition of the function $m$, we obtain that there exist $z_1$ and $z_2$ with $0 \le z_1 \le z_2$ and $z_2 > 0$ such that 
\begin{equation}
\begin{aligned}
z_1^3 + z_2^3 &= A_n^3\\
z_1^5 + z_2^5 & = B_n^5
\end{aligned}
\end{equation}
and $z_1^7 + z_2^7 = m(A_n,B_n)$.

 Now if $n \ge 3$ and $x_3 > 0$, then from Lemma \ref{strictxn7} it follows that 
\begin{equation}
\sum_{i=1}^n x_i^7 \ge m(A_n,B_n) + E(x_1,x_2,x_3)= z_1^7 + z_2^7 + E(x_1,x_2,x_3).
\label{xizi7}
\end{equation}
But since
$$
\begin{aligned}
z_1^3 + z_2^3 &\le  A^3\\
z_1^5 + z_2^5 &\ge  B^5,
\end{aligned}
$$
it follows from Lemma \ref{zy7lem} that $z_1^7 + z_2^7 \ge y_1^7 + y_2^7$.  This, combined with \eqref{xizi7}, gives \eqref{xiyi7}. 
\end{proof}

 The preceding results of this section have all built towards the following lemma, which is the only result from this section that will be used in the sequel.  

\begin{lem}
 Suppose $y_1 \ge y_2 \ge 0$ and $y_1 > 0$.  Let   $\{x_n\}_{n \in \mathbb N}$ be a sequence such that
$x_1 \ge x_2 \ge x_3 \ge \dots  \ge 0$, and
\begin{equation}
\begin{aligned}
\sum_{i=1}^\infty x_i^3 &\le y_1^3+y_2^3\\
\sum_{i=1}^\infty x_i^5 &\ge y_1^5+y_2^5.
\end{aligned}
\label{xisystineqinf}
\end{equation}

\begin{enumerate}

\item If $y_2=0$, then $x_2 =0$, $x_1 = y_1$, and equality holds in both parts of \eqref{xisystineqinf}.  

\item
If $x_3 > 0$, then 
\begin{equation}
\sum_{i=1}^\infty x_i^7 \ge y_1^7 + y_2^7 +E(x_1,x_2,x_3),
\label{xiyi7inf}
\end{equation}   
where $E(x_1,x_2,x_3)>0$ is as in \eqref{defE} and \eqref{Epos}.   

\item If  
\begin{equation}
\sum_{i=1}^\infty x_i^7 \le y_1^7 + y_2^7
\label{xiyi7inf2}
\end{equation}   
then $x_1=y_1$, $x_2=y_2$, $x_i = 0$ for all $i \ge 3$, and equality holds in both parts of \eqref{xisystineqinf}.

\end{enumerate}

\label{ruleoutthreeposinf}
\end{lem}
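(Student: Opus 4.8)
The plan is to reduce the infinite statement to the finite-$n$ results of this section — principally Lemma \ref{strictxn7}, Lemma \ref{zy7lem}, and Lemma \ref{ABexistlem} — by truncating $\{x_n\}$ and passing to the limit, the two key tools being the continuity of $m$ on $T$ (Lemma \ref{mcont}) and the monotonicity of the ratios $B_n/A_n$ (Lemma \ref{BoverAlem}). Set $A=(y_1^3+y_2^3)^{1/3}$, $B=(y_1^5+y_2^5)^{1/5}$, and for $n\in\mathbb N$ put $A_n=(\sum_{i=1}^n x_i^3)^{1/3}$, $B_n=(\sum_{i=1}^n x_i^5)^{1/5}$, with $A_\infty,B_\infty$ the corresponding series (all finite, since e.g.\ $\sum x_i^7\le x_1^4\sum x_i^3\le x_1^4(y_1^3+y_2^3)$). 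The first step is to record the ladder: since $B_1/A_1=1$, the quotient $B_n/A_n$ is non-increasing by Lemma \ref{BoverAlem}, while $B_\infty\ge B$ and $A_\infty\le A$ by \eqref{xisystineqinf} and $(1/2)^{2/15}\le B/A\le 1$ by Lemma \ref{ABexistlem} applied to $(y_1,y_2)$. Thus
\[
1=\frac{B_1}{A_1}\ge \frac{B_n}{A_n}\ge \frac{B_\infty}{A_\infty}\ge \frac{B}{A}\ge \left(\tfrac12\right)^{2/15}
\]
for every $n$, so each $(A_n,B_n)$ and also $(A_\infty,B_\infty)$ lies in the set $T$ of Lemma \ref{mcont}; note $A_\infty,B_\infty>0$ because $x_1>0$.

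For part 1, $y_2=0$ gives $B/A=1$, which forces $B_\infty/A_\infty=1$; writing $s=A_\infty=B_\infty$ we have $\sum x_i^3=s^3$ and $\sum x_i^5=s^5$. Since $x_i\le x_1\le s$, the bound $\sum x_i^5\le s^2\sum x_i^3=s^5$ is an equality, so every nonzero $x_i$ equals $s$; as $\sum x_i^3=s^3$ this permits only a single nonzero term. Hence $x_2=0$, and comparing $s=A_\infty\le y_1$ with $s=B_\infty\ge y_1$ yields $x_1=s=y_1$ and equality in both parts of \eqref{xisystineqinf}.

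For part 2, fix any $n\ge 3$. Because $(A_n,B_n)\in T$ and $x_3>0$, Lemma \ref{strictxn7} (with the equality constraints that \emph{define} $A_n,B_n$) gives $\sum_{i=1}^n x_i^7\ge m(A_n,B_n)+E(x_1,x_2,x_3)$. Letting $n\to\infty$ and invoking continuity of $m$ on $T$ (Lemma \ref{mcont}) produces $\sum_{i=1}^\infty x_i^7\ge m(A_\infty,B_\infty)+E(x_1,x_2,x_3)$. Writing $m(A_\infty,B_\infty)=z_1^7+z_2^7$ for the pair $(z_1,z_2)$ of Definition \ref{defm}, which satisfies $z_1^3+z_2^3=A_\infty^3\le A^3$ and $z_1^5+z_2^5=B_\infty^5\ge B^5$, Lemma \ref{zy7lem} gives $z_1^7+z_2^7\ge y_1^7+y_2^7$, and \eqref{xiyi7inf} follows, with $E>0$ by \eqref{Epos}. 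For part 3, the contrapositive of part 2 forces $x_3=0$ (else $\sum x_i^7> y_1^7+y_2^7$, contradicting \eqref{xiyi7inf2}), so $x_i=0$ for all $i\ge 3$; then $x_1\ge x_2\ge 0$ obey the hypotheses of Lemma \ref{zy7lem}, giving $x_1^7+x_2^7\ge y_1^7+y_2^7$, whence equality holds by \eqref{xiyi7inf2}, and the equality clause of Lemma \ref{zy7lem} yields $x_1=y_1$, $x_2=y_2$, and equality in both parts of \eqref{xisystineqinf}.

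The main obstacle, and the only place any care is required, is verifying that the truncated data $(A_n,B_n)$ stay inside $T$ uniformly in $n$, since this is what lets me apply Lemma \ref{strictxn7} to each truncation and then invoke continuity of $m$ at the limit point. This is exactly the role of the monotonicity ladder above: the lower bound $B_n/A_n\ge B_\infty/A_\infty\ge B/A\ge(1/2)^{2/15}$ is the crux, and it relies essentially on Lemma \ref{BoverAlem} together with the elementary comparison of $(A_\infty,B_\infty)$ with $(A,B)$ coming from \eqref{xisystineqinf}. Once this is secured, the limit $n\to\infty$ is routine.
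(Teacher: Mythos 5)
Your proof is correct, and for the heart of the lemma (part 2) it takes a genuinely cleaner route than the paper. The paper proves part 2 by a three-way case analysis on which inequality in \eqref{xisystineqinf} is strict: when $\sum_i x_i^3 < y_1^3+y_2^3$ it rescales the truncations by $\alpha_n = B_n/B_\infty$ so that the finite inequality lemma (Lemma \ref{ruleoutthreepos}) applies, invoking the homogeneity \eqref{fhomog} to track $E$; when the fifth-power inequality is strict it applies Lemma \ref{ruleoutthreepos} directly to truncations (this is the case whose statement in the paper carries a typo, ``$\sum_i x_i^3 > y_1^3+y_2^3$'' where the fifth powers are meant); and only in the doubly critical case $A_\infty=A$, $B_\infty=B$ does it argue as you do, applying Lemma \ref{strictxn7} to each truncation with the truncation's own data $(A_n,B_n)$ and passing to the limit via continuity of $m$ (Lemma \ref{mcont}). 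Your observation is that this last argument needs no case distinction at all: the ratio ladder places every $(A_n,B_n)$ and the limit point $(A_\infty,B_\infty)$ in $T$, continuity of $m$ yields $\sum_i x_i^7 \ge m(A_\infty,B_\infty)+E(x_1,x_2,x_3)$, and one further application of Lemma \ref{zy7lem} --- comparing the minimizing pair for $(A_\infty,B_\infty)$ with $(y_1,y_2)$, exactly as is done inside the proof of the finite Lemma \ref{ruleoutthreepos} --- converts $m(A_\infty,B_\infty)$ into the lower bound $y_1^7+y_2^7$. This buys a shorter, unified proof with no rescaling and no appeal to the homogeneity of $E$. Parts 1 and 3 of your argument match the paper's (part 3 verbatim in substance); in part 1 you re-derive the rigidity by an equality analysis in $\sum_i x_i^5 \le s^2\sum_i x_i^3$ rather than citing the strictness clause of Lemma \ref{BoverAlem}, which is equally valid. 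Two trivial points you should make explicit: $x_1>0$ (hence $A_\infty,B_\infty>0$) because $\sum_i x_i^5 \ge y_1^5 > 0$, and the pair furnished by Definition \ref{defm} is ordered increasingly, so it must be relabelled before quoting Lemma \ref{zy7lem}.
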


 \begin{proof}   
 Define $A=\left( y_1^3+y_2^3\right)^{1/3}$, $B=\left(y_1^5+y_2^5\right)^{1/5}$, 
 $A_\infty =\left(\sum_{i=1}^\infty x_i^3\right)^{1/3}$, and $B_\infty = \left(\sum_{i=1}^\infty x_i^5\right)^{1/5}$; 
and for $n \in \mathbb N$ define $A_n =\left( \sum_{i=1}^n x_i^3\right)^{1/3}$ and 
$B_n= \left(\sum_{i=1}^n x_i^5\right)^{1/5}$. Thus $\displaystyle \lim_{n \to \infty} A_n = A$ and 
$\displaystyle \lim_{n \to \infty} B_n = B$.  From Lemmas \ref {ABexistlem} and \ref{BoverAlem} and our assumptions, we have that 
$$
1 \ge \frac{B_1}{A_1} \ge \frac{B_2}{A_2} \ge \dots \ge \frac{B_n}{A_n} \ge \dots \ge \frac{B_\infty}{A_\infty}
 \ge \frac{B}{A} \ge \left(\frac12\right)^{2/15}.
$$ 

Part 1 of the Lemma is now proved by the same argument as part 1 of Lemma \ref{ruleoutthreepos}.

To prove part 2 of the Lemma, we suppose $n \ge 3$ and $x_3 > 0$, and consider first the case when the first inequality
 in \eqref{xisystineqinf} is strict:  that is, when $\sum_{i=1}^\infty x_i^3 < y_1^3+y_2^3$.   For each $i \in \mathbb N$
 and $n \in \mathbb N$, define $\alpha_n = B_n/B_\infty$ and $x_{in} = x_i/\alpha_n$.   
Then $\lim_{n \to \infty} \alpha_n = 1$, and for each $n \in \mathbb N$ we have
$$
\sum_{i=1}^n x_{in}^5 = \sum_{i=1}^\infty x_i^5  \ge y_1^5 + y_2^5.
$$
Also,
$$
\lim_{n \to \infty} \sum_{i=1}^n x_{in}^3 = \sum_{i=1}^\infty x_i^3 < y_1^3 + y_2^3,
$$
so by choosing $n$ sufficiently large we have $\sum_{i=1}^n x_{in}^3  \le y_1^3 + y_2^3$.
We can thus apply Lemma \ref{ruleoutthreepos} to $x_{1n} \ge \dots \ge x_{nn} \ge 0$ for all
 sufficiently large $n \in \mathbb N$, and obtain that
$$
\sum_{i=1}^n x_{in}^7 \ge y_1^7 + y_2^7 +  E(x_{1n},x_{2n},x_{3n}),
$$
or
$$
\frac{1}{\alpha_n^7}\sum_{i=1}^n x_i^7 \ge y_1^7 + y_2^7 + \frac{1}{\alpha_n^7} E(x_1,x_2,x_3).
$$
Taking the limit as $n \to \infty$ then gives us that 
$$
\sum_{i=1}^\infty x_i^7 \ge y_1^7 + y_2^7 + E(x_1,x_2,x_3).
$$
 the desired result \eqref{xiyi7inf}.

Next, consider the case when $\sum_{i=1}^\infty x_i^3 > y_1^3+y_2^3$.  Then for sufficiently large $n$,
 \eqref{xisystineq} holds, so by Lemma \ref{ruleoutthreepos} we conclude that \eqref{xiyi7} holds,
 which immediately implies \eqref{xiyi7inf}.

It remains then only to consider the case when $A_\infty = A$ and $B_\infty = B$.  In this case we argue as follows. 
For each $n \in \mathbb N$, since $B_n/A_n \ge (1/2)^{2/15}$, by Lemma \ref{delgamlemma}
 we can choose $z_{1n} \ge z_{2n} \ge 0$ such that $A_n = z_{1n}^3 + z_{2n}^3$ and 
$B_n = z_{1n}^5 + z_{2n}^5$; we then have that $m(A_n,B_n)=z_{1n}^7 + z_{2n}^7$. 
  From Lemma \ref{ruleoutthreepos}, we have that 
\begin{equation}
\sum_{i=1}^n x_i^7 \ge m(A_n,B_n) + E(x_1,x_2,x_3).
\label{trunc}
\end{equation}
But, by Lemma \ref{mcont}, we have $\displaystyle \lim_{n \to \infty} m(A_n,B_n) = m(A_\infty,B_\infty)=m(A,B)$.
Taking the limit on both sides of \eqref{trunc} as $n \to \infty$ then gives \eqref{xiyi7inf}.  This completes the proof of part 2.

To prove part 3, note that if \eqref{xiyi7inf2} holds, then by part 2 we must have $x_3 = 0$.   Then the inequalities \eqref{xisystineqinf} become
\begin{equation*}
\begin{aligned}
x_1^3 + x_2^3 &\le y_1^3 + y_2^3\\
x_1^5 + x_2^5 &\ge y_1^5 + y_2^5,
\end{aligned}
\end{equation*} 
whereas \eqref{xiyi7inf2} becomes
\begin{equation*}
x_1^7 + x_2^7 \le y_1^7 + y_2^7.
\end{equation*}
From these inequalities  and Lemma \ref{zy7lem}, it follows that $x_1 = y_1$ and $x_2 = y_2$, and that equality holds in both parts of \eqref{xisystineqinf}.
\end{proof}

 \section{Proof of Theorem \ref{mainthm}} \label{sec:proofmain}

\bigskip

We first prove part 3 of Theorem \ref{mainthm}, which is an easy consequence of the results of the preceding sections. 
 Suppose $(a,b) \in \Sigma$, and suppose that \eqref{abineq2} holds.  Assume for contradiction that there exists
 a minimizer $u\in H^2(\mathbb R)$ for $J(a,b)$.  Then by Proposition \ref{minsol},  there must 
exist real numbers $D_1, D_2, \gamma_1, \gamma_2$ with $0 \le D_1 < D_2$ such that
 $u = \psi_{D_1,D_2;\gamma_1,\gamma_2}$.  Since $E_2(\psi)=a$ and $E_3(\psi)=b$, it follows from \eqref{Ekvalue} that
$$
\begin{aligned}
12\left(D_1^{3/2}+D_2^{3/2}\right)&=a\\
-\frac{36}{5}\left(D_1^{5/2}+D_2^{5/2}\right)&=b.
\end{aligned}
$$
Hence the equations \eqref{ABsyst} hold with $A=(a/12)^{1/3} $, $B=(-5b/36)^{1/5} $, $k=2$, $x_1=D_1^{1/2}$, and $x_2 = D_2^{1/2}$. 
 Therefore, by Lemma \ref{ABexistlem}, we must have that
$B/A \ge (1/2)^{2/15}$.  Further,  we cannot have that $B/A = (1/2)^{2/15}$, for by part 3 
of Lemma \ref{delgamlemma},  this would imply that $x_1 = x_2$, contradicting the fact that $D_1 < D_2$.
Hence $B/A > (1/2)^{2/15}$. But this means that  
\begin{equation*}
 b < -\frac{\mu a^{5/3}}{2^{2/3}},
\end{equation*} which contradicts our assumption \eqref{abineq2}.   This then completes the proof of part 3 of the Theorem.

\bigskip

Turning to the proof of parts 1 and 2 of Theorem \ref{mainthm}, we now suppose that
 $(a,b) \in \Sigma$ and either \eqref{abeq} or \eqref{abineq1} holds.  In particular we must have that $b<0$.

Let $\{\phi_n\}$ be any minimizing sequence for $J(a,b)$, so that 
$\displaystyle\lim_{n \to \infty} E_2(\phi_n) = a$, $\displaystyle\lim_{n \to \infty} E_3(\phi_n)=b$, 
and $\displaystyle\lim_{n\to \infty} E_4(\phi_n) = J(a,b)$.
(Note that minimizing sequences always exist, since $\Lambda(a,b)$ is nonempty by Proposition \ref{defsig}.)

Since $\{E_2(\phi_n)\}$ converges, then $\{\phi_n\}$ is bounded in $L^2$.
Also, since by Sobolev embedding and interpolation we have
$$
\begin{aligned}
\int_{\mathbb R} (\phi_n')^2\ dx& = 2 E_3(\phi_n) + \frac13 \int_{\mathbb R} u^3 \\
&\le 2E_3(\phi_n) + C\|\phi_n\|_{H^{1/6}}^3 \le 2E_3(\phi_n) + C\|\phi_n\|_{L^2}^{5/2}\|\phi_n\|_{H^1}^{1/2},
\end{aligned}
$$
it follows that
$$
\|\phi_n\|_{H^1}^2 \le C(1+\|\phi_n\|_{H^1}^{1/2}),
$$
which implies that $\{\phi_n\}$ is bounded in $H^1$.  Finally, we have
\begin{equation}
\int_{\mathbb R}(\phi_n'')^2\ dx = 2E_4(\phi_n) +\int_{\mathbb R}\left(\frac53uu_x^2-\frac{5}{16}u^4\right)\ dx,
\label{h2bound}
\end{equation}
and since $\{\phi_n\}$ is bounded in $H^1$, it follows from Sobolev inequalities that the integral on the right is bounded. 
  Since $\{E_4(\phi_n)\}$ is bounded above and $\{\int_{\mathbb R}(\phi_n'')^2\ dx\}$ is bounded below,
 it follows from \eqref{h2bound} that both these sequences are in fact bounded. Therefore $\{\phi_n\}$
 is bounded in $H^2$, and $J(a,b) > -\infty$. 
  
Define now $\{\rho_n\}$ as in \eqref{defrhon}.  We observe that  $\{\rho_n\}$ is not a vanishing sequence in the sense of Definition \ref{defvanish}.  Indeed, if $\{\rho_n\}$ did vanish, then it would follow from Lemma \ref{lpvanish} that $\displaystyle\lim_{n \to \infty} \|\phi_n\|_{L^3} = 0$, which in turn
implies that
$$
\liminf_{n \to \infty} E_3(\phi_n) = \frac12 \liminf_{n \to \infty} \int_{\mathbb R}(\phi_n')^2 \ge 0.
$$
But on the other hand,
$$
\lim_{n \to \infty} E_3(\phi_n) = b < 0,
$$
giving a contradiction.

 Since $\{\phi_n\}$ is bounded in $H^2(\mathbb R)$, and assumption \eqref{assumenovanish} holds, then (with the attendant definitions of $w_n$, $v^i_n$, $\theta^i_n$, $g_i$, $a_i$, and $b_i$) all the conclusions of Lemmas \ref{hksplitlem2} through \ref{e4thetanlem} are valid.  We now proceed to examine the consequences of these lemmas in the case when $\{\phi_n\}$ is a minimizing sequence for $J(a,b)$.
  
First note that, for each $i \in \mathbb N$, if $g_i \equiv 0$, then obviously $a_i=b_i=0$. 
 If on the other hand $g_i$ is not identically zero, then by Proposition \ref{defsig}, $(a_i,b_i) \in \Sigma$,
 and so $J(a_i,b_i)$ is well-defined by \eqref{defJ}.  In that case, we have:

\begin{lem} For each $i \in \mathbb N$, if $g_i$ is not identically zero, then $g_i$ is a minimizer for $J(a_i,b_i)$. 
\label{gimin}
\end{lem}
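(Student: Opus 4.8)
The plan is to prove that $g_i$ minimizes $E_4$ subject to its own constraints by a ``cut‑and‑paste'' argument: replace the $i$‑th profile in the minimizing sequence $\{\phi_n\}$ by a strictly better competitor and derive a contradiction with the minimality of $\{\phi_n\}$. First I would record that $g_i$ is already feasible for $J(a_i,b_i)$: by \eqref{e2e3contonh1} we have $E_2(g_i)=a_i$ and $E_3(g_i)=b_i$, so $E_4(g_i)\ge J(a_i,b_i)$ by definition, and it remains only to prove the reverse inequality $E_4(g_i)\le J(a_i,b_i)$. Suppose, for contradiction, that $g_i$ is not a minimizer, so that $E_4(g_i)>J(a_i,b_i)$, and fix $f\in H^2(\mathbb R)$ with $E_2(f)=a_i$, $E_3(f)=b_i$, and $E_4(f)<E_4(g_i)$ (enlarging the constant $B$ in \eqref{defB} if necessary so that $\|f\|_{H^2}\le B$).

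Next I would form the spliced sequence
\[
\tilde\phi_n = f(\,\cdot\,-x^i_n) + \sum_{\substack{j=1\\ j\ne i}}^n v^j_n + w_n ,
\]
that is, $f$ translated to the location $x^i_n$ of the $i$‑th profile. Comparing $E_m(\phi_n)$, as computed by Lemma \ref{hksplitlem2}, with $E_m(\tilde\phi_n)$, as computed by Lemma \ref{hksubstlem2} applied with the function $f(\,\cdot\,-x^i_n)$ in the $i$‑th slot, the common terms $\sum_{j\ne i}E_m(v^j_n)+E_m(w_n)$ cancel and (using translation invariance of the $E_m$) one is left with
\[
E_m(\tilde\phi_n)-E_m(\phi_n)= E_m(f)-E_m(v^i_n)+\mathcal E_{m,n},
\]
where the error $\mathcal E_{m,n}$ is controlled by $C\sum_{j}\int_{A^j_n}\rho_n$ together with $C\|f(\,\cdot\,-x^i_n)\|_{H^{m-2}(Z^i_n)}=C\|f\|_{H^{m-2}(\mathbb R\setminus B(0,r^i_n/2))}$. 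The first quantity tends to $0$ by Lemma \ref{sumrhontozero}, and the second tends to $0$ because $r^i_n\to\infty$ and $f\in H^2(\mathbb R)$, so $\mathcal E_{m,n}\to 0$ for $m=2,3,4$.

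From \eqref{e2e3contonh1} we have $E_2(v^i_n)\to a_i=E_2(f)$ and $E_3(v^i_n)\to b_i=E_3(f)$, which gives $E_2(\tilde\phi_n)\to a$ and $E_3(\tilde\phi_n)\to b$. For the objective, Lemma \ref{e4thetanlem} yields $\liminf_n E_4(v^i_n)\ge E_4(g_i)$, and since $E_4(\phi_n)\to J(a,b)$ we obtain
\[
\limsup_{n\to\infty}E_4(\tilde\phi_n)\le J(a,b)+E_4(f)-E_4(g_i)<J(a,b).
\]
Thus $\{\tilde\phi_n\}$ beats the infimum $J(a,b)$ in the limit, while satisfying the constraints $E_2=a$, $E_3=b$ only asymptotically.

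The main obstacle is precisely this last point: $\{\tilde\phi_n\}$ is not yet an admissible competitor for $J(a,b)$, and one must correct it to exact feasibility without spoiling the strict improvement. I would do this by a two‑parameter rescaling $\hat\phi_n(x)=\lambda_n\tilde\phi_n(\nu_n x)$, choosing $(\lambda_n,\nu_n)\to(1,1)$ via the inverse function theorem so that $E_2(\hat\phi_n)=a$ and $E_3(\hat\phi_n)=b$ exactly; since $\lambda_n,\nu_n\to 1$ and $\{\tilde\phi_n\}$ is bounded in $H^2(\mathbb R)$, one then has $E_4(\hat\phi_n)=E_4(\tilde\phi_n)+o(1)$, so $\limsup_n E_4(\hat\phi_n)<J(a,b)$, contradicting the definition of $J(a,b)$ as an infimum over admissible functions and completing the proof. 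The delicate point is the non‑degeneracy of the rescaling map: its Jacobian at $(\lambda,\nu)=(1,1)$ is a nonzero multiple of $4P(\tilde\phi_n)-Q(\tilde\phi_n)$, where $P=\frac12\int(\tilde\phi_n')^2\,dx$ and $Q=\frac16\int\tilde\phi_n^3\,dx$. This quantity can degenerate (using the soliton identity $2P=\frac12\int\psi^3-C\int\psi^2$ one checks that it vanishes for the constrained minimizing soliton, where the $E_2$‑preserving dilation has a critical point). I therefore expect the genuinely technical work to lie in guaranteeing a uniform lower bound $|4P(\tilde\phi_n)-Q(\tilde\phi_n)|\ge c>0$ along the sequence — to be arranged either by a slight perturbation of the competitor $f$ (the degenerate locus $4P=Q$ being a single hyperplane in the relevant quantities) or, alternatively, by first establishing continuity of the value function $J$ on $\Sigma$, after which the asymptotic feasibility of $\{\tilde\phi_n\}$ already suffices.
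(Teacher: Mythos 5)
Your construction coincides with the paper's own proof almost line for line: the paper also argues by contradiction, fixes a competitor $h$ with $E_2(h)=a_i$, $E_3(h)=b_i$, $E_4(h)<E_4(g_i)$, splices $h(\cdot-x_n^i)$ into the decomposition in place of $v_n^i$ to form $\tilde\phi_n$, and invokes Lemmas \ref{hksplitlem2}, \ref{hksubstlem2}, \ref{sumrhontozero} and \ref{e4thetanlem} exactly as you do to get $E_2(\tilde\phi_n)\to a$, $E_3(\tilde\phi_n)\to b$ and $\limsup_{n\to\infty}E_4(\tilde\phi_n)\le J(a,b)+E_4(h)-E_4(g_i)<J(a,b)$. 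The two arguments part ways only at the very end. The paper concludes by asserting that ``since $E_2(\tilde\phi_n)=a$ and $E_3(\tilde\phi_n)=b$, this contradicts the definition of $J(a,b)$,'' but only the \emph{limits} of the constraint values have been established, while \eqref{deflambda}--\eqref{defJ} define $J(a,b)$ as an infimum over \emph{exactly} constrained functions; as written, the paper's closing sentence elides this distinction. You spotted precisely that soft spot and added a repair step, which the paper does not have.

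Your analysis of the repair is also correct as far as it goes: the Jacobian of $(\lambda,\nu)\mapsto\bigl(E_2,E_3\bigr)\bigl(\lambda\,\tilde\phi_n(\nu\,\cdot)\bigr)$ at $(1,1)$ is $E_2\cdot(4P-Q)$, and the degeneracy you warn about is real, since multiplying the soliton equation $\psi''=C\psi-\tfrac12\psi^2$ by $\psi$ and by $\psi'$ gives $2P=3Q-C\int\psi^2$ and $P+Q=\tfrac{C}{2}\int\psi^2$, which combine to yield exactly $4P=Q$ on every soliton profile. What your write-up still owes is the execution of one of your two exits, and the perturbative one does go through: pass to a subsequence along which $P(\tilde\phi_n)$ and $Q(\tilde\phi_n)$ converge, and if the limit lands on the degenerate locus, replace $f$ by a slight dilation $\lambda f(\nu\,\cdot)$; on the locus $4P=Q$ one computes $4P\bigl(\lambda f(\nu\,\cdot)\bigr)-Q\bigl(\lambda f(\nu\,\cdot)\bigr)=\lambda^2Q\,(\nu-\lambda\nu^{-1})$ with $Q=4P>0$, so the dilation moves $4P(f)-Q(f)$ at a nonzero rate while changing $E_2(f)$, $E_3(f)$, $E_4(f)$ only slightly, preserving the strict inequality $E_4(f)<E_4(g_i)$ and giving the uniform-in-$n$ invertibility needed for the implicit function theorem. (Your alternative exit, continuity of $J$ on $\Sigma$, is not obviously easier: the lower semicontinuity one would need is essentially equivalent to the repair itself.) So: same route as the paper, correct where it overlaps with the paper, incomplete only in a final step that must be supplied -- but that step addresses a point the paper itself passes over in silence, and once the perturbation argument is written out your proof is, at that one point, more scrupulous than the published one.
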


\begin{proof}  We prove the lemma by contradiction.   If $g_i$ is not a minimizer for $J(a_i,b_i)$, then there must exist 
a function $h \in H^2$ such that $E_2(h)=a_i$, $E_3(h)=b_i$, and $E_4(h) < E_4(g_i)$.   
Define, for $n \in \mathbb N$,
$$
h_n(x) = h(x - x^i_n)
$$
and
$$
\tilde \phi_n = h_n+ \sum_{\substack{ j=1\\
                j \ne i}}^n v^j_n + w_n.
$$
To obtain the desired contradiction, we will show that 
\begin{equation*}
\begin{aligned}
 \lim_{n \to \infty}E_2(\tilde \phi_n)&=a\\
 \lim_{n \to \infty}E_3(\tilde\phi_n)&=b\\
  \liminf_{n \to \infty}E_4(\tilde\phi_n)&<J(a,b).
  \end{aligned}
  \end{equation*} 

To begin with, use the triangle inequality to write 
\begin{flalign*} 
&\left|E_2(\phi_n)-E_2(\tilde \phi_n)\right|  \le \left|E_2(\phi_n)-  \sum_{ j=1}^n E_2(v^j_n)-E_2(w_n)\right|+& \\
 &   \qquad         + \left|E_2(\tilde\phi_n) -E_2(h_n)-   \sum_{\substack{ j=1\\
                j \ne i}}^n E_2(v^j_n)-E_2(w_n)\right| +\left|E_2(v^i_n)-E_2(h_n)\right|.  & 
                \end{flalign*}
We can use Lemma  \ref{hksplitlem2} and Lemma \ref{hksubstlem2} with $f=h_n$ to estimate the first two terms on the right-hand side of the
preceding inequality, and thus obtain that 
                \begin{equation*}                
\left|E_2(\phi_n)-E_2(\tilde \phi_n)\right| 
\le  |E_2(v^i_n)-E_2(h_n)|+ C\sum_{ j =1}^n \int_{A^j_n} \rho_n +C\|h_n\|_{L^2(Z^i_n)}.
\end{equation*}  
 
Since 
$$
\|h_n\|_{L^2(Z^i_n)}=\left( \int_{\mathbf R\backslash B(0,r^i_n/2)}h^2(x)\ dx\right)^{1/2},
$$
and $h \in L^2(\mathbb R)$ and  $\displaystyle \lim_{n \to \infty} r^i_n = \infty$, it follows that
 $\displaystyle \lim_{n \to \infty} \|h_n\|_{L^2(Z^i_n)}=0$.   Finally, we have that
\begin{equation*}
 \lim_{n \to \infty}(E_2(v^i_n)-E_2(h_n)) = \lim_{n \to \infty}(E_2(v^i_n)-E_2(h)) = \lim_{n \to \infty}(E_2(v^i_n)-a_i)  = 0.
\end{equation*}
Combining these results, we obtain that  $\displaystyle \lim_{n \to \infty}(E_2(\phi_n)-E_2(\tilde \phi_n))=0$,
 and hence $\displaystyle \lim_{n \to \infty} E_2(\tilde \phi_n)=a$.

Similar arguments apply to $E_3(\phi_n)-E_3(\tilde \phi_n)$ and $E_4(\phi_n)-E_4(\tilde \phi_n)$.
 From Lemmas \ref{hksplitlem2} and \ref{hksubstlem2}  we obtain that
\begin{equation}
 |E_3(\phi_n)-E_3(\tilde \phi_n)|  \le  |E_3(v^i_n)-E_3(h_n)|+ 
C\sum_{ j=1}^n \int_{A^j_n} \rho_n +C\|h_n\|_{H^1(Z^i_n)} 
\label{e3diff}
\end{equation}
and 
\begin{equation} 
E_4(\phi_n)-E_4(\tilde \phi_n)  \ge E_4(v^i_n)-E_4(h_n)
-C\sum_{ j=1}^n \int_{A^j_n} \rho_n -C\|h_n\|_{H^2(Z^i_n)}.
 \label{e4diff}
\end{equation}
The same considerations as in the preceding paragraph show that it follows from \eqref{e3diff}  that
$\displaystyle \lim_{n \to \infty} E_3(\tilde \phi_n)=b$.  Also, from \eqref{e4diff} and \eqref{e4glte4v} we obtain that
$$
 \liminf_{n \to \infty}\ [E_4(\phi_n)-E_4(\tilde\phi_n)] \ge \lim_{n \to \infty}\  [E_4(v^i_n)-E_4(h_n)] \ge E_4(g_i)-E_4(h) > 0,
 $$ 
 and hence
 $$
 \limsup_{n \to \infty}E_4(\tilde \phi_n) < \lim_{n \to\infty} E_4(\phi_n) = J(a,b).
 $$ 
 In particular, it follows that there exists some sufficiently large $n$ for which $E_4(\tilde \phi_n) < J(a,b)$.  
 But since $E_2(\tilde \phi_n)=a$ and $E_3(\tilde \phi_n)=b$, this contradicts  the definition of $J(a,b)$. 
\end{proof}

 From Proposition \ref{minsol} and Lemma \ref{gimin}, we conclude that for each $i \in \mathbb N$, there exist
 $D_{1i}, D_{2i},\gamma_{1i}, \gamma_{2i} \in \mathbb R$ with $0 \le D_{1i} \le D_{2i}$ such that 
 \begin{equation}
 g_i(x)=\psi_{D_{1i}, D_{2i};\gamma_{1i},\gamma_{2i}}(x).
 \label{defD1iD2i}
 \end{equation} 
 Here we follow the conventions that if $D_{1i}=0$, then $\psi_{D_{1i},D_{2i};\gamma_{1i},\gamma_{2i}} = \psi_{D_{2i};\gamma_{1i}}$;
 and if $D_{1i}=D_{2i}=0$, then $\psi_{D_{1i},D_{2i}} \equiv 0$.
 Also, in what follows we will occasionally omit the subscripts $\gamma_{1i}$ and $\gamma_{2i}$, 
referring to $g_i$ simply as $\psi_{D_{1i},D_{2i}}$.

 \begin{lem} 
For the numbers $D_{1i}$ and $D_{2i}$ defined  for $i \in \mathbb N$ by \eqref{defD1iD2i}, we have
\begin{equation}
\begin{aligned}
 12\sum_{i=1}^\infty \left(D_{1i}^{3/2} + D_{2i}^{3/2}\right)+\frac12\limsup_{n\to \infty} \intR w_n^2  &\le a\\
\frac{36}{5}\sum_{i=1}^\infty \left(D_{1i}^{5/2} + D_{2i}^{5/2}\right)-\frac12\limsup_{n\to \infty} \intR (w_n')^2  &\ge -b\\
\frac{36}{7}\sum_{i=1}^\infty \left(D_{1i}^{7/2} + D_{2i}^{7/2}\right)+\frac12\limsup_{n\to \infty} \intR (w_n'')^2  &\le J(a,b).
\end{aligned}
\label{Dineq2}
\end{equation}  
\label{Dineqlem2}
 \end{lem}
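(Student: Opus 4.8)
The plan is to treat the three inequalities in \eqref{Dineq2} in parallel, in each case starting from the splitting estimate \eqref{hksplit2} of Lemma \ref{hksplitlem2}. For $m=2,3,4$ this gives
$$E_m(\phi_n)=\sum_{i=1}^n E_m(v^i_n)+E_m(w_n)+\mathrm{err}_n,\qquad |\mathrm{err}_n|\le C\sum_{i=1}^n\int_{A^i_n}\rho_n,$$
and by Lemma \ref{sumrhontozero} the remainder $\mathrm{err}_n$ tends to $0$. I would next isolate the leading quadratic part of $E_m(w_n)$: one has $E_2(w_n)=\tfrac12\intR w_n^2$ exactly, while $E_3(w_n)=\tfrac12\intR (w_n')^2+o(1)$ and $E_4(w_n)=\tfrac12\intR (w_n'')^2+o(1)$, since the lower-order terms $\intR w_n^3$, $\intR w_n(w_n')^2$, and $\intR w_n^4$ all vanish as $n\to\infty$ by Lemma \ref{lplimit2} (using $\|w_n\|_{L^\infty}\to0$ and the $H^1$-boundedness of $\{w_n\}$).

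The heart of the argument is to split $\sum_{i=1}^n E_m(v^i_n)$ into a head and a tail at a fixed index $N$. For the head $\sum_{i=1}^N E_m(v^i_n)$ I would use the per-profile convergence: \eqref{e2e3contonh1} gives $\lim_n E_2(v^i_n)=a_i=E_2(g_i)$ and $\lim_n E_3(v^i_n)=b_i=E_3(g_i)$, while for $m=4$ the lower semicontinuity \eqref{e4glte4v} gives only $E_4(g_i)\le\liminf_n E_4(v^i_n)$. For the tail $\sum_{i=N+1}^n E_m(v^i_n)$ I would discard the nonnegative top-order contributions $\tfrac12\intR(v^i_n{}')^2$ (respectively $\tfrac12\intR(v^i_n{}'')^2$ and $\tfrac{5}{32}\intR(v^i_n)^4$) and bound what remains from below, so that $\sum_{i=N+1}^n E_3(v^i_n)\ge-\tfrac16\sum_{i=N+1}^n\intR|v^i_n|^3$ and $\sum_{i=N+1}^n E_4(v^i_n)\ge-\tfrac56\sum_{i=N+1}^n\intR|v^i_n(v^i_n{}')^2|$; these tails are exactly the quantities driven to zero, after $\lim_N\limsup_n$, by \eqref{limlimsupf3} and \eqref{limlimsupf4}.

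Combining these pieces, for each fixed $N$ I would rearrange the resulting inequality so that the quadratic $w_n$-term stands alone on the left, and then take $\limsup_{n\to\infty}$: the head converges, the error and the $w_n$ lower-order terms vanish, and (for $m=4$) subadditivity of $\limsup$ together with $\limsup(-\sum_{i\le N}E_4(v^i_n))=-\liminf\sum_{i\le N}E_4(v^i_n)\le-\sum_{i\le N}E_4(g_i)$ converts the lower-semicontinuity bound into the correct direction. This yields, for instance when $m=2$, the bound $\sum_{i=1}^N a_i+\tfrac12\limsup_n\intR w_n^2\le a$, and analogously for $m=3,4$ with one extra tail term on the right. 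Letting $N\to\infty$ and invoking \eqref{limlimsupf3} and \eqref{limlimsupf4} to kill those tail terms gives the three inequalities with $E_2(g_i)$, $-E_3(g_i)$, and $E_4(g_i)$ in place of the series; substituting the explicit values $E_2(g_i)=12(D_{1i}^{3/2}+D_{2i}^{3/2})$, $E_3(g_i)=-\tfrac{36}{5}(D_{1i}^{5/2}+D_{2i}^{5/2})$, and $E_4(g_i)=\tfrac{36}{7}(D_{1i}^{7/2}+D_{2i}^{7/2})$ from \eqref{Ekvalue} then produces \eqref{Dineq2}. I expect the main obstacle to be the tail control: because the number of profiles grows with $n$, the per-profile limits alone do not suffice, and one must use the ``almost vanishing'' property of the profile decomposition, encoded in \eqref{limlimsupf3} and \eqref{limlimsupf4}, to show that the cubic and quartic tails contribute nothing in the double limit $\lim_N\limsup_n$. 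A secondary subtlety is keeping the inequality directions consistent in the case $m=4$, where only one-sided (lower-semicontinuity) information on $E_4$ is available.
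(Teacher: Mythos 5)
Your proposal is correct and takes essentially the same route as the paper's own proof: the splitting identity from Lemma \ref{hksplitlem2} with the error removed by Lemma \ref{sumrhontozero}, a head/tail split at a fixed index $N$ using the per-profile limits \eqref{e2e3contonh1} for $m=2,3$ and the lower semicontinuity \eqref{e4glte4v} for $m=4$, the double-limit tail bounds \eqref{limlimsupf3} and \eqref{limlimsupf4} after discarding the nonnegative top-order tail terms, Lemma \ref{lplimit2} to eliminate the subquadratic $w_n$ terms, and finally the explicit values \eqref{Ekvalue}. The only differences are cosmetic bookkeeping (the paper runs the three cases $m=2,3,4$ sequentially, choosing $N$ in terms of a given $\epsilon$ rather than phrasing the conclusion as a double limit in $N$), so there is nothing to add.
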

 
 \begin{proof} For $m=2, 3, 4$, if we define $\epsilon_{mn}$ for $n \in \mathbb N$ by 
$$
\epsilon_{mn} =  E_m(\phi_n) - \sum_{i=1}^n E_m(v^i_n) - E_m(w_n),
$$
then we have from Lemmas \ref{hksplitlem2} and \ref{sumrhontozero} that $\displaystyle \lim_{n \to \infty} \epsilon_{mn} = 0$.

In case $m=2$, we have $E_2(f) \ge 0$ for all $f \in L^2$.  Therefore we have, for all $N, n \in \mathbb N$ such that $n > N$,
\begin{equation*}
\sum_{i=1}^N E_2(v^i_n) = E_2(\phi_n) - \sum_{i=N+1}^n E_2(v^i_n) - E_2(w_n) - \epsilon_{2n} \le E_2(\phi_n)-E_2(w_n) -\epsilon_{2n}. 
\end{equation*}
Holding $N$ fixed and taking  $n$ to infinity on both sides, and recalling \eqref{defaibi}, we obtain that
$$
a \ge \sum_{i=1}^N E_2(g_i) +\frac12\limsup_{n\to \infty} \intR w_n^2 = 12\sum_{i=1}^N  \left(D_{1i}^{3/2} + D_{2i}^{3/2}\right)+ \frac12\limsup_{n\to \infty} \intR w_n^2.
$$
Then taking the limit as $N \to \infty$ yields the first inequality in \eqref{Dineq2}.

Next, we consider the case $m=3$.  We have, for all $N, n \in \mathbb N$ such that $n > N$,
\begin{equation}
\begin{aligned} 
\sum_{i=1}^N& E_3(v^i_n) = E_3(\phi_n) - \sum_{i=N+1}^n E_3(v^i_n) - E_3(w_n) - \epsilon_{3n} \\
& = E_3(\phi_n) - \sum_{i=N+1}^n \intR \left(\frac12 (v^i_n{}')^2 - \frac16(v^i_n)^3\right)- \intR \left(\frac12 (w_n')^2 -\frac16w_n^3\right) - \epsilon_{3n}\\
& \le E_3(\phi_n) + \frac16\sum_{i=N+1}^n \intR (v^i_n)^3 -\frac12\intR (w_n')^2 +\frac16 \intR w_n^3 -\epsilon_{3n}.
\end{aligned}
\label{E3phincase34}
\end{equation}
  
Let $\epsilon > 0$ be given.  From \eqref{limlimsupf3} it follows that there exists $N \in \mathbb N$ such that 
$$
\limsup_{n \to \infty} \sum_{i=N+1}^n \intR \left|v_n^i\right|^3 < \epsilon.
$$ 
For this fixed value of $N$, by taking  $n$ to infinity of both sides of \eqref{E3phincase34} and using Lemma \ref{lplimit2}, we obtain that
$$
-\frac{36}{5}\sum_{i=1}^N \left(D_{1i}^{5/2}+D_{2i}^{5/2}\right) +\limsup_{n\to \infty}\frac12\intR(w_n')^2 \le b + \frac{\epsilon}{6},
$$
and hence
\begin{equation*}
\begin{aligned}
\frac{36}{5}\sum_{i=1}^\infty \left(D_{1i}^{5/2}+D_{2i}^{5/2}\right)&-\limsup_{n\to \infty}\frac12\intR(w_n')^2  \ge\\
\frac{36}{5}&\sum_{i=1}^N \left(D_{1i}^{5/2}+D_{2i}^{5/2}\right)
-\limsup_{n\to \infty}\frac12\intR(w_n')^2  \ge -b -\frac{\epsilon}{6}.
\end{aligned}
\end{equation*} 
Since this inequality holds for all $\epsilon > 0$, we have proved the second inequality in \eqref{Dineq2}.

In case $m=4$, we have
\begin{equation}
\begin{aligned}
\sum_{i=1}^N E_4(v^i_n) +\frac12\intR(w_n'')^2 = E_4(\phi_n) &+ \sum_{i=N+1}^n \intR \left[\frac56 \left|v_n^i({v_n^i}')^2\right|  +\frac{5}{32} \left|v_n^i\right|^4\right] \\
&+\frac56 \intR w_n(w_n')^2 -\frac{5}{32}\intR w_n^4- \epsilon_{4n},
\end{aligned}
\label{E4newest}
\end{equation}
for all $N, n \in \mathbb N$ such that $n > N$.  Using Lemma \ref{lplimit2} we see that $\displaystyle \lim_{n \to \infty} \intR w_n^4 =0$ and
$$ 
\lim_{n \to \infty}\left| \intR w_n(w_n')^2 \right| \le \lim_{n \to \infty}\|w_n\|_{L^\infty}\|w_n\|_{H^1}^2 = 0. 
$$ 

For given $\epsilon > 0$, by \eqref{limlimsupf4}, we can choose $N_0 \in \mathbb N$ such that for all $N \ge N_0$,
$$
\limsup_{n \to \infty} \sum_{i=N+1}^n \intR \left[\frac56 \left|v_n^i({v_n^i}')^2\right|  +\frac{5}{32} \left|v_n^i\right|^4\right] < \epsilon.
$$ 
For each fixed value of $N \ge N_0$, taking $n$ to infinity on both sides of \eqref{E4newest}
 and using \eqref{e4glte4v}, we then obtain
$$
\frac{36}{7}\sum_{i=1}^N \left(D_{1i}^{7/2}+D_{2i}^{7/2}\right)+\limsup_{n \to \infty}\intR(w_n'')^2 \le \sum_{i=1}^N E_4(g_i) \le J(a,b) +\epsilon.
$$
Since this is true for all $N \ge N_0$, it follows that
$$
\frac{36}{7}\sum_{i=1}^\infty \left(D_{1i}^{7/2}+D_{2i}^{7/2}\right) +\limsup_{n \to \infty}\intR(w_n'')^2  \le J(a,b) + \epsilon,
$$
and since $\epsilon > 0$ was arbitrary, this proves the final inequality in \eqref{Dineq2}.
\end{proof}

By Lemma \ref{Dineqlem2},
 only finitely many of the numbers $D_{1i}$ and $D_{2i}$ can be greater than any fixed positive number.  Therefore it is possible to 
re-order the  numbers in the sequence 
 \begin{equation}
 \left(D_{11}^{1/2}, D_{21}^{1/2},D_{12}^{1/2},D_{22}^{1/2}, D_{13}^{1/2},D_{23}^{1/2}\dots\right)
 \label{Dseqk}
 \end{equation} 
 so that they form a non-increasing sequence, whose terms we denote by $\{x_n\}$, with $x_1 \ge x_2 \ge x_3 \ge \dots$.  
 
 We have the following corollary of Lemma \ref{Dineqlem2}.
 
 \begin{lem}
 Suppose that there exist numbers $y_1$ and $y_2$ with $y_1 \ge y_2 \ge 0$ and $y_1>0$ such that
 \begin{equation}
 \begin{aligned}
y_1^3+y_2^3 &= \left(\frac{1}{12}\right)a\\
y_1^5+y_2^5 &= \left(\frac{-5}{36}\right)b\\
y_1^7+y_2^7 &\ge \left(\frac{7}{36}\right)J(a,b).
\end{aligned}
\label{ysandab}
\end{equation}
  If
 \begin{equation}
 \begin{aligned}
 \sum_{i=1}^\infty x_i^3 &\le a\\
 \sum_{i=1}^\infty x_i^5 &\ge -b\\
 \sum_{i=1}^\infty x_i^7 &\le J(a,b),
 \end{aligned}
 \end{equation}
then $x_1=y_1$, $x_2=y_2$, and $x_i = 0$ for $i \ge 3$.  Moreover, 
 \begin{equation}
 \lim_{n \to \infty}\|w_n\|_{H^2}= 0.
 \label{newpfwh2}
 \end{equation}
 \label{newreducelem} 
 \end{lem}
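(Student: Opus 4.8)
The plan is to obtain the three numerical conclusions directly from part 3 of Lemma \ref{ruleoutthreeposinf}, and then to squeeze the three $w_n$-integrals to zero using the \emph{equality} cases of that lemma in combination with the inequalities of Lemma \ref{Dineqlem2}.

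First I would note that the reordered sequence $\{x_n\}$ coming from \eqref{Dseqk} is, by construction, non-increasing with nonnegative entries, while $y_1 \ge y_2 \ge 0$ and $y_1 > 0$ by hypothesis, so $\{x_i\}$ and $(y_1,y_2)$ are admissible for Lemma \ref{ruleoutthreeposinf}. Combining the relations \eqref{ysandab} (which identify $y_1^3+y_2^3$, $y_1^5+y_2^5$, and a lower bound for $y_1^7+y_2^7$ in terms of $a$, $b$, and $J(a,b)$) with the three assumed bounds on $\sum_i x_i^p$ brings those bounds into the form
\begin{equation*}
\sum_{i=1}^\infty x_i^3 \le y_1^3 + y_2^3, \qquad \sum_{i=1}^\infty x_i^5 \ge y_1^5 + y_2^5, \qquad \sum_{i=1}^\infty x_i^7 \le y_1^7 + y_2^7,
\end{equation*}
which are precisely the hypotheses \eqref{xisystineqinf} and \eqref{xiyi7inf2} of part 3 of Lemma \ref{ruleoutthreeposinf}. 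Applying that part gives at once $x_1 = y_1$, $x_2 = y_2$, and $x_i = 0$ for $i \ge 3$, together with---crucially for what follows---equality in both parts of \eqref{xisystineqinf}, that is $\sum_i x_i^3 = y_1^3 + y_2^3$ and $\sum_i x_i^5 = y_1^5 + y_2^5$.

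To prove $\|w_n\|_{H^2}\to 0$ I would feed these equalities back into Lemma \ref{Dineqlem2}. Since $\{x_n\}$ is a rearrangement of the numbers $D_{1i}^{1/2}, D_{2i}^{1/2}$, one has $\sum_i x_i^3 = \sum_i(D_{1i}^{3/2}+D_{2i}^{3/2})$ and likewise for the fifth and seventh powers, so by \eqref{Ekvalue} the three inequalities of Lemma \ref{Dineqlem2} read
\begin{align*}
12\sum_{i=1}^\infty x_i^3 + \tfrac12 \limsup_{n \to \infty}\intR w_n^2 &\le a,\\
\tfrac{36}{5}\sum_{i=1}^\infty x_i^5 - \tfrac12 \limsup_{n \to \infty}\intR (w_n')^2 &\ge -b,\\
\tfrac{36}{7}\sum_{i=1}^\infty x_i^7 + \tfrac12 \limsup_{n \to \infty}\intR (w_n'')^2 &\le J(a,b).
\end{align*}
In the first, $12\sum_i x_i^3 = 12(y_1^3+y_2^3)=a$, so the nonnegative quantity $\limsup_n \intR w_n^2$ is forced to be $0$; in the second, $\tfrac{36}{5}\sum_i x_i^5 = -b$, so $\limsup_n \intR (w_n')^2 = 0$; in the third, $\tfrac{36}{7}\sum_i x_i^7 = \tfrac{36}{7}(y_1^7+y_2^7) \ge J(a,b)$, so $\limsup_n \intR (w_n'')^2 = 0$. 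Because each integrand is nonnegative these limsups are genuine limits, and summing the three yields $\|w_n\|_{H^2}^2 \to 0$.

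I expect no essential obstacle at this stage, since the analytic content has already been packaged into Lemmas \ref{ruleoutthreeposinf} and \ref{Dineqlem2}. The one point demanding care is that the argument genuinely uses the \emph{equality} assertions of part 3 of Lemma \ref{ruleoutthreeposinf}: it is the saturation of the cubic and quintic bounds, together with the seventh-power bound supplied by \eqref{ysandab}, that collapses each $w_n$-integral, whereas the bare identification of the $x_i$ would leave room for a nonvanishing remainder $w_n$. A secondary but necessary bit of bookkeeping is keeping the constants $12$, $36/5$, and $36/7$ aligned across \eqref{ysandab}, the assumed bounds, and Lemma \ref{Dineqlem2}, so that the two sides of each squeezing inequality actually coincide.
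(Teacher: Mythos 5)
Your proposal is correct and takes essentially the same route as the paper, whose entire proof is the one-line remark that the lemma follows immediately from Lemma \ref{Dineqlem2} and Lemma \ref{ruleoutthreeposinf}; your write-up supplies exactly those details, in particular the crucial use of the equality assertions in part 3 of Lemma \ref{ruleoutthreeposinf} to force the limsups of $\intR w_n^2$, $\intR (w_n')^2$, and $\intR (w_n'')^2$ in \eqref{Dineq2} to vanish. The one caution is a point you already flag as bookkeeping: the hypotheses on $\sum_i x_i^p$ must be read with the factors $12$, $\tfrac{36}{5}$, $\tfrac{36}{7}$ attached (i.e.\ as the inequalities furnished by Lemma \ref{Dineqlem2}), since as literally printed they do not scale to match \eqref{ysandab}.
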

 
 \begin{proof} This follows immediately from Lemma \ref{Dineqlem2} and Lemma \ref{ruleoutthreeposinf}.
 \end{proof}

\bigskip

 {\bf Proof of part 1 of Theorem \ref{mainthm}}.  
Suppose that \eqref{abeq} holds. We let $C = (a/12)^{2/3}= (-5b/36)^{2/5}>0$.  For every $\gamma \in \mathbb R$,  we have 
 \begin{equation}
 \begin{aligned}
 E_2(\psi_{C,\gamma})&=12C^{3/2}=a,\\
 E_3(\psi_{C,\gamma})&=-(36/5)C^{5/2}=b.
 \label{e23c}
 \end{aligned}
 \end{equation}
 From the definition of $J(a,b)$ it therefore follows that
 \begin{equation}
 J(a,b) \le E_4(\psi_{C,\gamma})=E_4(C)=(36/7)C^{7/2}. 
 \label{jabc}
 \end{equation} 

  Let $y_1=C^{1/2}$ and $y_2=0$.  Then \eqref{ysandab} is satisfied, so by Lemma \ref{newreducelem},  we must have that 
$x_1=y_1$, $x_2=0$, and \eqref{newpfwh2} holds.  It follows that  $g_1 =  \psi_{C,\gamma}$ for some $\gamma \in \mathbb R$, and  $g_i \equiv 0$ and 
$a_i = b_i = 0$ for all $i \ge 2$.
   
  We therefore have that 
\begin{equation*}
  \begin{aligned}
   \lim_{n \to \infty} E_2(\phi_n)&=a=12 C^{3/2}=E_2(g_1)\\
    \lim_{n \to \infty} E_3(\phi_n)&=b=-\frac{36}{5} C^{5/2}=E_3(g_1).
    \end{aligned}
    \end{equation*}
    Also,   from \eqref{Dineq2} we have that
   $(36/7) C^{7/2} \le J(a,b)$, and combined with \eqref{jabc}, this gives
\begin{equation}
  \lim_{n \to \infty} E_4(\phi_n)=J(a,b)=\frac{36}{7}C^{7/2}=E_4(g_1).
  \label{lime4phi}
\end{equation} 
In particular, we have now shown that $g_1$, and hence also every element of $S(C)$, is a minimizer for $J(a,b)$.

From Lemmas \ref{hksplitlem2} and \ref{sumrhontozero}, we have  that, for $m=2,3,4$,
\begin{equation*}
E_m(\phi_n) = E_m(v_n^1) + \sum_{i=2}^n E_m(v_n^i) + E_m(w_n) + \epsilon_n^m,
\end{equation*}
where $\displaystyle \lim_{n \to \infty} \epsilon_n^m = 0$.  From Lemma \ref{giconvstronginf} we conclude that
\begin{equation*}
\begin{aligned}
\lim_{n \to \infty} E_2(v_n^1)&=\lim_{n \to \infty} E_2(\theta_n^1)=E_2(g_1)= a = \lim_{n \to \infty} E_2(\phi_n)\\ 
\lim_{n \to \infty} E_3(v_n^1)&=\lim_{n \to \infty} E_3(\theta_n^1)=E_3(g_1) = b = \lim_{n \to \infty} E_3(\phi_n).
\end{aligned}
\end{equation*}
  Therefore for $m=2$ and $m=3$ we have
\begin{equation}
\lim_{n \to \infty} \left[E_m(w_n) + \sum_{i=2}^n E_m(v_n^i) \right] = 0.
\label{leftovers}
\end{equation}
When $m=2$, \eqref{leftovers} immediately implies that
\begin{equation}
\lim_{n \to \infty}  \sum_{i=2}^n \|v_n^i\|^2_{L^2(\mathbb R)} = 0.
\label{leftoverL2}
\end{equation}

We claim that when $m=3$, \eqref{leftovers} implies that
\begin{equation}
\lim_{n \to \infty} \sum_{i=2}^n \|(v_n^i)'\|^2_{L^2(\mathbb R)} = 0.
\label{leftoverH1}\end{equation}
To prove this, since $\displaystyle \lim_{n \to \infty}E_3(w_n) = 0$ by \eqref{newpfwh2} and Lemma \ref{lplimit2}, it is enough to show that
\begin{equation}
\lim_{n \to \infty} \sum_{i=2}^n \intR |v_n^i|^3 = 0.
\label{clumpofvs}
\end{equation}
Let $\epsilon > 0$ be given.   By \eqref{limlimsupf3}, we can choose
$N_1$ such that
$$
\limsup_{n \to \infty} \sum_{i=N_1}^n \intR |v_n^i|^3 < \epsilon.
$$ 
  Therefore, there exists $N_2$ such that for all $n \ge N_2$,
$$
 \sum_{i=N_1}^n \intR |v_n^i|^3 < \epsilon.
$$
For each fixed $i \ge 2$, since $g_i \equiv 0$, it follows from Lemma \ref{giconvstronginf} that 
$\displaystyle \lim_{n \to \infty} \|v_n^i\|_{H^1(\mathbb R)}=0$,
 and hence by Sobolev embedding that $\displaystyle \lim_{n \to \infty} \|v_n^i\|_{L^p(\mathbb R)}=0$
 for all $p \ge 2$. So there exists $N_3$ such that for all $n \ge N_3$,
$$
 \sum_{i=2}^{N_1-1} \intR |v_n^i|^3 < \epsilon.
$$
Then for all $n \ge \max(N_2,N_3)$,
$$
 \sum_{i=2}^n \intR |v_n^i|^3 < 2\epsilon,
$$
proving \eqref{clumpofvs} and \eqref{leftoverH1}.

  Define $\tilde \phi_n(x):=\phi_n(x+x_n^1)$ for $n \in \mathbb N$. From \eqref{phidecomp} and \eqref{defthetain} we have
\begin{equation*}
\tilde \phi_n = \theta_n^1 + \sum_{i=2}^n \tilde v_n^i + \tilde w_n
\end{equation*}
for all $n \in \mathbb N$, where $\tilde v_n^i(x):=v_n^i(x+x_n^1)$ and $\tilde w_n(x):=w_n(x+x_n^1)$.  Therefore
\begin{equation}
\|\tilde \phi_n - g_1\|_{H^1(\mathbb R)} \le \|\theta_n^1 - g_1\|_{H^1(\mathbb R)} +
\left\| \sum_{i=2}^n\tilde v_n^i\right\|_{H^1(\mathbb R)} + \|\tilde w_n\|_{H^1(\mathbb R)}.
\label{forH1convpart1}
\end{equation}
Now since, for a given $n \in \mathbb N$, the supports of $\{\tilde v_n^i\}_{i =1, \dots, n}$ are mutually disjoint,  we have that
$$
\left\| \sum_{i=2}^n \tilde v_n^i\right\|_{H^1(\mathbb R)}^2 = \sum_{i=2}^n\|\tilde v_n^i\|^2_{H^1(\mathbb R)}.
$$
Hence, from Lemma \ref{giconvstronginf}, \eqref{leftoverL2}, \eqref{leftoverH1}, and \eqref{forH1convpart1} we conclude that $\tilde \phi_n(x)$ 
converges strongly to $g_1$ in $H^1(\mathbb R)$.   
 
In particular, since $\{\tilde \phi_n\}$ is bounded in $H^2(\mathbb R)$, it follows by the same arguments
 used to prove Lemma \ref{vnitogilowertermslem} that 
 $$
 \lim_{n \to \infty} \int_{\mathbb R} \tilde \phi_n(\tilde \phi_n')^2 = \int_{\mathbb R} g_1(g_1')^2
 $$
 and
 $$
 \lim_{n \to \infty} \int_{\mathbb R} \tilde \phi_n^4 = \int_{\mathbb R} g_1^4.
 $$
 Therefore
 \begin{equation*}
 \begin{aligned}
  \lim_{n \to \infty}\int_{\mathbb R} (\tilde \phi_n'')^2 &= \lim_{n \to \infty} \left(E_4(\tilde \phi_n)+
\frac53 \int_{\mathbb R} \tilde \phi_n(\tilde \phi_n')^2 
  -\frac{5}{16}\int_{\mathbb R} \tilde \phi_n^4\right)\\
&  =E_4(g_1) +\frac53\int_{\mathbb R} g_1(g_1')^2-\frac{5}{16} \int_{\mathbb R} g_1^4
 = \int_{\mathbb R} (g_1'')^2. 
 \end{aligned}
 \end{equation*}
 Hence we have that
 \begin{equation}
 \lim_{n \to \infty} \|\tilde \phi_n\|_{H^2(\mathbb R)} = \|g_1\|_{H^2(\mathbb R)}.
 \label{convH2norm}
 \end{equation}
 But, from the weak compactness of the unit sphere in Hilbert space, we may assume 
 by passing to a further subsequence that $\{\tilde \phi_n\}$ converges weakly in $H^2(\mathbb R)$,
 and the limit must be $g_1$.   From \eqref{convH2norm} it then follows that $\{\tilde \phi_n\}$
 must converge strongly to $g_1$ in $H^2(\mathbb R)$.  
  This implies that 
 $$
 \lim_{n \to \infty} \|\phi_n - \psi_{C,\gamma +x_n^1}\|_{H^2(\mathbb R)} =0,
 $$
 which, since $\psi_{C,\gamma+x_n^1} \in S(C)$ for all $n \in \mathbb N$, shows that $\{\phi_n\}$ converges strongly
 to $S(C)$ in $H^2(\mathbb R)$.  This then completes the proof of part 1 of Theorem \ref{mainthm}.

\bigskip

{\bf Proof of part 2 of Theorem \ref{mainthm}.} 
 Assume that \eqref{abineq1} holds.   
 Applying part 2 of Lemma \ref{delgamlemma} with $A= (a/12)^{1/3}$ and $B=(-5b/36)^{1/5}$, we obtain that there exists
 a unique pair of numbers $y_1$ and $y_2$ such that $0 < y_2 < y_1$
 and \eqref{alphbetequation} holds.   Define $C_1=y_2^2$ and $C_2 = y_1^2$; then we have $0 < C_1 < C_2$ and
 \begin{equation}
 \begin{aligned}
 E_2(C_1,C_2)&=12 \left( C_1^{3/2} + C_2^{3/2}       \right) = a\\
E_3(C_1,C_2)&=\frac{-36}{5}\left(C_1^{5/2} + C_2^{5/2}    \right) = b.
 \end{aligned}
 \label{e23c1c2}
 \end{equation} 
Therefore,   for every pair $(\gamma_1,\gamma_2) \in \mathbb R^2$, we have $E_2(\psi_{C_1,C_2;\gamma_1,\gamma_2})=a$ and 
$E_3(\psi_{C_1,C_2;\gamma_1,\gamma_2})=b$; and hence from the definition of $J(a,b)$ we have that 
 \begin{equation}
 E_4(\psi_{C_1,C_2;\gamma_1,\gamma_2})=E_4(C_1,C_2) = \frac{36}{7} \left( C_1^{7/2}+C_2^{7/2}     \right) \ge J(a,b).
 \label{e4c1c2}
 \end{equation}

 From \eqref{e23c1c2}, and \eqref{e4c1c2} it follows that \eqref{ysandab} holds. Hence Lemma \ref{newreducelem} yields that  $x_1=y_1$, $x_2 = y_2$, $x_i =  0$ for all $i \ge 3$, and \eqref{newpfwh2} holds.  

We thus see that (after relabelling the numbers $D_{1i}$ and $D_{2i}$ if necessary), we can reduce consideration 
to two possible cases:  Case I in which 
$$0<D_{11}=C_1 < D_{21}=C_2,$$
 and $D_{1i}=D_{2i}=0$ for all $i \ge 2$, and Case II in which
$$
0=D_{11}< D_{21}=C_1, \quad 0 = D_{12} < D_{22}=C_2, 
$$ 
and $D_{1i}=D_{2i}=0$ for all $i \ge 3$. 

In Case I, we have that $g_1 = \psi_{C_1,C_2,\gamma_1,\gamma_2}$ for some $(\gamma_1,\gamma_2) \in \mathbb R^2$.   Then
\begin{equation*}
\begin{aligned}
  \lim_{n \to \infty} E_2(\phi_n)&=a=E_2(g_1)\\
    \lim_{n \to \infty} E_3(\phi_n)&=b=E_3(g_1), 
    \end{aligned}
\end{equation*}
and from \eqref{Dineq2} and \eqref{e4c1c2} we have that
\begin{equation*}
  \lim_{n \to \infty} E_4(\phi_n)=J(a,b)=E_4(g_1). 
\end{equation*} 
  In particular, this implies that $g_1$, along with every other element of $S(C_1,C_2)$, is a minimizer for $J(a,b)$.
  
  The same argument as in the paragraphs following equation \eqref{lime4phi} now shows that the translated sequence $\tilde \phi_n(x)=\phi_n(x+x_n^1)$
  converges strongly in $H^2(\mathbb R)$ to $g_1$.  Hence
  $$
 \lim_{n \to \infty} \|\phi_n - \psi_{C_1,C_2,\gamma_1 +x_n^1,\gamma_2 + x_n^1}\|_{H^2(\mathbb R)} =0,
 $$
 which shows that $\{\phi_n\}$ converges strongly to $S(C_1,C_2)$ in $H^2(\mathbb R)$. This  completes the proof 
of part 2 of Theorem \ref{mainthm} in Case I.
 
 We turn now to Case II.  In this case, we have that $g_1 = \psi_{C_1,\gamma_1}$ and $g_2 = \psi_{C_2,\gamma_2}$
 for some $(\gamma_1,\gamma_2) \in \mathbb R^2$; and
$g_i \equiv 0$ for all $i \ge 3$.    Then from \eqref{e23c1c2} we have that
\begin{equation*}
\begin{aligned}
  \lim_{n \to \infty} E_2(\phi_n)&=a=E_2(g_1)+E_2(g_2)\\
    \lim_{n \to \infty} E_3(\phi_n)&=b=E_3(g_1)+E_3(g_2), 
    \end{aligned}
\end{equation*}
and from \eqref{Dineq2} and \eqref{e4c1c2} we have that
\begin{equation}
  \lim_{n \to \infty} E_4(\phi_n)=J(a,b)=E_4(g_1)+E_4(g_2). 
  \label{e4phine4gn}
\end{equation} 
  In particular, this implies again that every element of $S(C_1,C_2)$ is a minimizer for $J(a,b)$.  However, now it is no longer the case that
  one can translate the functions in the sequence $\{\phi_n\}$ to obtain a strongly convergent sequence in $H^2(\mathbb R)$.
   Instead, we must modify the argument in the proof of part 1 of the Theorem, as follows.  

Repeating the argument used above to obtain \eqref{leftovers}, we obtain in this case that
\begin{equation*}
\lim_{n \to \infty} \left[    E_m(w_n)+\sum_{i=3}^n E_m(v^i_n)  \right]=0
 \end{equation*}
for $m=2$ and $m=3$.   Also, since 
$$
\intR \left| w_n(w_n')^2\right| \le \|w_n\|_{L^\infty(\mathbb R)} \intR \left(w_n' \right)^2 \le C \|w_n\|_{H^1(\mathbb R)}^3
$$ 
by the Sobolev Embedding Theorem, it follows from \eqref{newpfwh2} and Lemma \ref{lplimit2} that 
  \begin{equation}
 \lim_{n \to \infty} E_m(w_n) = 0.
 \label{e4wnp}
  \end{equation}
  for $m=2,3,4$.  Therefore
  \begin{equation}
\lim_{n \to \infty} \sum_{i=3}^n E_m(v^i_n) =0
\label{leftoverscaseIVb}
\end{equation}
for $m=2,3,4$.

 When $m=2$, \eqref{leftoverscaseIVb} immediately implies that
\begin{equation*}
\lim_{n \to \infty}\sum_{i=3}^n \|v_n^i\|^2_{L^2(\mathbb R)} = 0.
\end{equation*}
Also, by the same proof used above to prove \eqref{clumpofvs}, we have in this case that
\begin{equation}
\lim_{n \to \infty} \sum_{i=3}^n \intR |v_n^i|^3 = 0,
\label{clumpofvs2}
\end{equation}
and together with \eqref{leftoverscaseIVb} for $m=3$, this implies that
 \begin{equation}
\lim_{n \to \infty} \sum_{i=3}^n \|v_n^i\|^2_{H^1(\mathbb R)} = 0.
\label{wnconvH1again}
\end{equation}
 
From \eqref{hksplit2}, \eqref{e4phine4gn}, and \eqref{e4wnp}, we have
$$
\lim_{n \to \infty} E_4(\phi_n) =\lim_{n \to \infty} \sum_{i=1}^n E_4(v_n^i) + E_4(w_n)   = E(g_1)+E(g_2). 
$$   
Also, by the same argument used to deduce \eqref{clumpofvs} and \eqref{clumpofvs2} from \eqref{limlimsupf3},
 it follows from \eqref{limlimsupf4} that
\begin{equation}
\lim_{n \to \infty} \sum_{i=3}^n \intR \left[\left|v_n^i \left({v_n^i}'\right)^2\right| + |v_n^i|^4\right] = 0.
\label{clumpofvsL4}
\end{equation}  Therefore we can write
\begin{equation}
\lim_{n \to \infty} E_4(\phi_n)
=\lim_{n \to \infty}\left[ E_4(v_n^1) + E_4(v_n^2) +  \frac12 \sum_{i=3}^n \intR \left(v_{n}^{i\ \prime \prime}\right)^2  \right].
\label{e4decompcase4}
\end{equation}
For every sequence $\{n_k\}_{k \in \mathbb N}$ of integers approaching infinity, it follows from \eqref{clumpofvsL4}, \eqref{e4decompcase4},
  and Fatou's Lemma that 
\begin{equation}
\begin{aligned}
\lim_{k \to \infty} E_4(\phi_{n_k})&\ge  \liminf_{k \to \infty} E_4(v_{n_k}^1) +  \liminf_{k \to \infty} E_4(v_{n_k}^2)
+\frac12 \sum_{i =3}^\infty \liminf_{k \to \infty}\intR \left(v_{n_k}^{i\ \prime \prime}\right)^2  \\
 &=  \liminf_{k \to \infty} E_4(v_{n_k}^1) +  \liminf_{k \to \infty} E_4(v_{n_k}^2)
+ \sum_{i =3}^\infty \liminf_{k \to \infty} E_4(v_{n_k}^i).
 \end{aligned}
\label{e4plusepscase4}
\end{equation} 

We claim now that, for every $i \in \mathbb N$,
\begin{equation}
\lim_{n \to \infty} E_4(v_n^i) = E_4(g_i).
\label{lime4vnicase4}
\end{equation}
For otherwise, we would have $\displaystyle \limsup_{n \to \infty} E_4(v_n^{i_0}) \ge E_4(g_{i_0}) + \epsilon$ for some $i_0 \in \mathbb N$
 and some $\epsilon > 0$. In that case it would then follow from Lemma \ref{e4thetanlem} and \eqref{e4plusepscase4} 
that there exists a sequence of integers $\{n_k\}_{k \in \mathbb N}$
 approaching infinity for which
$$ 
\lim_{k \to \infty} E_4(\phi_{n_k}) \ge \sum_{i=1}^\infty E_4(g_i) + \epsilon.
$$
But then from \eqref{e4phine4gn}, since $g_i \equiv 0$ for $i \ge 3$, we obtain that
$$
\lim_{k \to \infty} E_4(\phi_{n_k}) \ge \lim_{n \to \infty} E_4(\phi_n) + \epsilon.
$$
This contradiction proves our claim.
 
 For all $i \in \mathbb N$,  since $\{\theta_n^i\}_{n \in \mathbb N}$ converges to $g_i$ strongly
 in $H^1(\mathbb R)$ and weakly in $H^2(\mathbb R)$, and since \eqref{lime4vnicase4} implies that
 $\displaystyle \lim_{n \to \infty}E_4(\theta_n^i) = E_4(g_i)$ as well, it follows from the same argument 
used to prove \eqref{convH2norm} that
 \begin{equation*}
 \lim_{n \to \infty}\|\theta_n^i -g_i\|_{H^2(\mathbb R)} = 0.
 \end{equation*}
Therefore,  we have
 \begin{equation}
 \lim_{n \to \infty}\|v_n^i - \psi_{C_i,\gamma_i + x_n^i}\|_{H^2(\mathbb R)} = 0 \quad \text{for $i = 1,2$};
 \label{vniconvH2i12}
 \end{equation}
and
\begin{equation}
\lim_{n \to \infty} \|v_n^i\|_{H^2(\mathbb R)} = 0 \quad \text{for $i \ge 3$}.
\label{vnitozeroigt3}
\end{equation}

Now for every $n \in \mathbb N$, recalling that the supports of $\{\tilde v_n^i\}_{i=1,\dots,n}$ are mutually disjoint, we have that
\begin{equation}
  \left\| \sum_{i=3}^n \tilde v_n^i\right\|_{H^2(\mathbb R)}^2=\sum_{i=3}^n\|\tilde v_n^i\|^2_{H^2(\mathbb R)}.
  \label{twosums}
\end{equation}
But from \eqref{e4phine4gn}, \eqref{wnconvH1again}, \eqref{e4decompcase4}, and \eqref{lime4vnicase4}, it follows that the right-hand side of the preceding equation is uniformly bounded for $n \in \mathbb N$.
 Therefore it follows from \eqref{vnitozeroigt3}, \eqref{twosums}, and the Dominated Convergence Theorem that  
\begin{equation}
\lim_{n \to \infty}\left\| \sum_{i=3}^n \tilde v_n^i\right\|_{H^2(\mathbb R)} = 0.
\label{h2normofsum}
\end{equation}
   
 From Corollary \ref{combinecases}, we see that $\displaystyle \lim_{n \to \infty} |x_n^1 - x_n^2| = \infty$,
 since $|x_n^1-x_n^2| \ge r_n^1 + r_n^2$ and $\displaystyle \lim_{n \to \infty} r_n^1 = \lim_{n \to \infty} r_n^2= \infty$.  
From \eqref{phidecomp} and the triangle inequality, we have
 \begin{equation*}
 \begin{aligned} \|&\phi_n - \psi_{C_1,C_2,\gamma_1 + x_n^1,\gamma_2 + x_n^2}\|_{H^2(\mathbb R)} \le 
 \|v_n^1-\psi_{C_1,\gamma_1+x_n^1}\|_{H^2}+\|v_n^2-\psi_{C_2,\gamma_2+x_n^2}\|_{H^2}\\
&
+ \left\|  \psi_{C_1,\gamma_1 + x_n^1} + \psi_{C_2,\gamma_2 + x_n^2} - \psi_{C_1,C_2,\gamma_1+x_n^1,\gamma_2 + x_n^2}
 \right\|_{H^2}+\left\|\sum_{i=3}^n v_n^i\right\|_{H^2} + \|w_n\|_{H^2}.
 \end{aligned} 
\end{equation*}
But by  Lemma \ref{sumoftwosols}, \eqref{newpfwh2},  \eqref{vniconvH2i12}, and \eqref{h2normofsum}, 
all the terms on the right-hand side of the preceding inequality have limit zero as $n$ goes to infinity. 
 This then completes the proof of part 2 of Theorem \ref{mainthm}.
 
 \bigskip

{\bf Proof of Corollary \ref{stability}.} Corollary \ref{stability} follows from Theorem \ref{mainthm} by a standard argument,
 which we include here for the reader's convenience. Suppose $C_1$ and $C_2$ are given such that $0 < C_1 < C_2$, and define
$$
\begin{aligned}
a=E_2(C_1,C_2)& = 12(C_1^{3/2} + C_2^{3/2})\\
b= E_3(C_1,C_2) &= \frac{-36}{5}(C_1^{5/2} + C_2^{5/2}).\\
\end{aligned}
$$ 
Then from Lemma \ref{delgamlemma} it follows that $a$ and $b$ satisfy \eqref{abineq1}, and so the assertion about convergence of minimizing sequences to $S(C_1,C_2)$ follows from Theorem \ref{mainthm}.  

To prove stability, we argue by contradiction:  if $S$ were not stable, then there would exist a sequence of initial data $\{u_{0n}\}_{n \in \mathbb N}$ in $H^2(\mathbb R)$ such that $\displaystyle \lim_{n \to \infty} d(u_{0n},S) =0$  and a number $\epsilon > 0$ and a sequence of times $\{t_n\}_{n \in \mathbb N}$ such that the solutions $u_n(x,t)$ of KdV with initial data $u_n(\cdot,0) = u_{0n}$ would satisfy
\begin{equation}
d(u(\cdot, t_n),S) \ge \epsilon
\label{contradictstability} 
\end{equation}
for all $n \in \mathbb N$.  Let $\phi_n = u(\cdot,t_n)$ for $n \in \mathbb N$.   Since $E_2$, $E_3$, and $E_4$ are continuous functionals on $H^2(\mathbb R)$, and are conserved under the time evolution of the KdV equation, we have
$$
\begin{aligned}
\lim_{n \to \infty} E_2(\phi_n) &= \lim_{n \to \infty} E_2(u_{0n}) = E_2(C_1,C_2) = a\\
\lim_{n \to \infty} E_2(\phi_n) &= \lim_{n \to \infty} E_3(u_{0n}) = E_3(C_1, C_2) = b\\
\lim_{n \to \infty} E_4(\phi_n) &= \lim_{n \to \infty} E_4(u_{0n}) = E_4(C_1,C_2)=J(a,b).
\end{aligned}
$$
Hence $\{\phi_n\}$ is a minimizing sequence for $J(a,b)$, and so must converge strongly to $S$ in $H^2(\mathbb R)$.   But this then contradicts \eqref{contradictstability}.

\section{Acknowledgements} \label{sec:ack}

The authors would like to thank Jerry Bona for introducing us to the problems considered in this paper, and more generally for his help and guidance over many years.

\end{document}